\documentclass[11pt,a4paper]{amsart}
\pdfoutput=1
\usepackage{amsfonts, amsthm, amsmath, amssymb,inputenc,geometry,todonotes,mathtools,physics,bbm,tikz-cd,comment}
\usepackage[toc,page]{appendix}
\usepackage{kpfonts}
\usepackage[mathscr]{euscript}
\definecolor{winered}{rgb}{0.5,0,0}
\usepackage{hyperref}
\hypersetup{citecolor = winered, linkcolor = winered, menucolor = winered, colorlinks = true}

\usepackage[style = alphabetic]{biblatex}
\renewbibmacro{in:}{}
\addbibresource{references.bib}

\usepackage[foot]{amsaddr}

\newtheoremstyle{theoremdd}
{\topsep}{\topsep}{\upshape}{0pt}{\bfseries}{.}{ }{\thmname{#1}\thmnumber{ #2}\thmnote{ (#3)}}

\theoremstyle{definition}
\newtheorem{Th}{Theorem}[section]
\newtheorem{Lemma}[Th]{Lemma}
\newtheorem{Cor}[Th]{Corollary}
\newtheorem{Prop}[Th]{Proposition}
\newtheorem{Def}[Th]{Definition}

\newtheorem{Rem}[Th]{Remark}

\newtheorem{Ex}[Th]{Example}

\newcommand{\Hom}{\text{Hom}}

\newcommand{\R}{\mathbb{R}}

\renewcommand{\D}{\mathscr{D}}

\newcommand{\B}{\mathbf{B}}
\newcommand{\conj}{\overline}

\newcommand{\cat}{\mathsf}
\renewcommand{\u}{\underline}
\newcommand{\co}{\text{co}}
\newcommand{\colim}{\text{colim}}

\newcommand{\pre}{\cat{Pre}}
\newcommand{\spre}{\cat{sPre}}

\newcommand{\cechpre}{\cat{s}\check{\cat{P}}\cat{re}}

\newcommand{\cart}{\cat{Cart}}

\newcommand{\site}{\mathscr{C}}
\newcommand{\good}{\text{good}}
\newcommand{\open}{\text{open}}
\newcommand{\sub}{\text{sub}}
\newcommand{\Man}{\cat{Man}}
\newcommand{\Open}{\cat{Open}}

\makeatletter
\newtheorem*{rep@theorem}{\rep@title}
\newcommand{\newreptheorem}[2]{%
\newenvironment{rep#1}[1]{%
 \def\rep@title{#2 \ref{##1}}%
 \begin{rep@theorem}}%
 {\end{rep@theorem}}}
\makeatother

\newreptheorem{theorem}{Theorem}

\newreptheorem{lemma}{Lemma}

\newreptheorem{cor}{Corollary}

\usetikzlibrary{calc}
\usetikzlibrary{decorations.pathmorphing}

\tikzset{curve/.style={settings={#1},to path={(\tikztostart)
    .. controls ($(\tikztostart)!\pv{pos}!(\tikztotarget)!\pv{height}!270:(\tikztotarget)$)
    and ($(\tikztostart)!1-\pv{pos}!(\tikztotarget)!\pv{height}!270:(\tikztotarget)$)
    .. (\tikztotarget)\tikztonodes}},
    settings/.code={\tikzset{quiver/.cd,#1}
        \def\pv##1{\pgfkeysvalueof{/tikz/quiver/##1}}},
    quiver/.cd,pos/.initial=0.35,height/.initial=0}

\tikzset{tail reversed/.code={\pgfsetarrowsstart{tikzcd to}}}
\tikzset{2tail/.code={\pgfsetarrowsstart{Implies[reversed]}}}
\tikzset{2tail reversed/.code={\pgfsetarrowsstart{Implies}}}

\title{Diffeological Principal Bundles and Principal Infinity Bundles}

\author{Emilio Minichiello}
\address{CUNY Graduate Center, Email address:  \href{mailto:eminichiello@gradcenter.cuny.edu}{eminichiello@gradcenter.cuny.edu}}

\begin{document}

\maketitle

\begin{abstract}
In this paper, we study diffeological spaces as certain kinds of discrete simplicial presheaves on the site of cartesian spaces with the coverage of good open covers. The \v{C}ech model structure on simplicial presheaves provides us with a notion of $\infty$-stack cohomology of a diffeological space with values in a diffeological abelian group $A$. We compare $\infty$-stack cohomology of diffeological spaces with two existing notions of \v{C}ech cohomology for diffeological spaces in the literature \cite{krepski2021sheaves} \cite{iglesias2020vcech}. Finally, we prove that for a diffeological group $G$, that the nerve of the category of diffeological principal $G$-bundles is weak homotopy equivalent to the nerve of the category of $G$-principal $\infty$-bundles on $X$, bridging the bundle theory of diffeology and higher topos theory.
\end{abstract}

\tableofcontents

\section{Introduction}

Principal $G$-bundles and \v{C}ech cohomology are important tools in the study of smooth manifolds. However, in recent years, the desire to expand the typical objects of study in differential geometry has led to various frameworks in which one can define a ``generalized smooth space." In this paper, we construct a bridge between two such frameworks. One of them is diffeology, as popularized in the textbook \cite{iglesias2013diffeology}. A diffeological space consists of a set $X$, and a set $\D_X$ of functions $\R^n \to X$, for varying $n \geq 0$, satisfying three simple conditions. While the definition of a diffeological space is simple, a large number of interesting spaces outside the purview of classical differential geometry can be given a diffeology. Every finite dimensional smooth manifold inherits a canonical diffeology, as does the set $C^\infty(X,Y)$ of smooth maps between any two diffeological spaces. In fact the category of diffeological spaces is complete, cocomplete and cartesian closed. More precisely it is a quasi-topos \cite{baez2009convenient}. This is of course not the case for the category of finite dimensional smooth manifolds, and thus provides a ``better" category in which to work. Various important constructions in classical differential geometry have been defined for diffeological spaces, like differential forms, deRham cohomology, fiber bundles, tangent spaces \cite{christensen2015tangent}, and recently \v{C}ech cohomology \cite{iglesias2020vcech} \cite{krepski2021sheaves} \cite{ahmadi2023diffeological}.

The second framework is higher topos theory. Here the objects of interest are $\infty$-stacks over the site $\cart$ of cartesian spaces, or more colloquially known as smooth $\infty$-groupoids. Many constructions of classical differential geometry can be extended to smooth $\infty$-groupoids, with interesting applications. One such extension is the notion of a principal $\infty$-bundle, as defined in \cite{NSSGeneral} and \cite{NSSPresentations}. Classical principal bundles and non-abelian bundle gerbes are particular examples of principal $\infty$-bundles. Principal $\infty$-bundles allow for a robust framework wherein one can study twisted, equivariant or differential refinements of generalized cohomology theories. For more on this theory we recommend the texts \cite{fiorenza2011cech}, \cite{amabel2021differential}, \cite{schreiber2013differential}, \cite{bunke2013differential}. In this paper, we will use the presentation of this theory by simplicial presheaves. Thus, the reader does not need to be comfortable with the language of $\infty$-categories in order to read this paper.

The following theorem is the main result of this paper, which shows that when $G$ is a diffeological group, then homotopically speaking, the notion of diffeological principal $G$-bundle over a diffeological space $X$ and the notion of $G$-principal $\infty$-bundle over $X$ coincide.

\begin{reptheorem}{th diffeological principal bundles are infinity bundles}
Given a diffeological space $X$ and a diffeological group $G$, there is a weak homotopy equivalence, 
\begin{equation*}
    N \cat{Prin}_G^\infty(X) \simeq N \cat{DiffPrin}_G(X).
\end{equation*}
where $N \cat{DiffPrin}_G(X)$ is the nerve of the category of diffeological principal $G$-bundles on $X$ and $N \cat{Prin}_G^{\infty}(X)$ is the nerve of the category of $G$-principal $\infty$-bundles on $X$.
\end{reptheorem}

Some explanations are in order. Baez-Hoffnung proved in \cite{baez2009convenient} that the category of diffeological spaces is equivalent to the category $\cat{ConSh}(\Open)$ of concrete sheaves on $\Open$, the category of open subsets of cartesian spaces and smooth maps, equipped with the coverage of open covers. This means that diffeological spaces can be thought of as certain kinds of sheaves. This is a very powerful point of view, especially with respect to studying the homotopy theory of diffeological spaces \cite{pavlov2022projective}. In particular, the category of concrete sheaves on $\cart$ embeds fully faithfully into the category $\spre(\cart)$ of simplicial presheaves on $\cart$, whose objects are functors $X: \cart^{op} \to \cat{sSet}$. Just as some presheaves of sets are sheaves, some simplicial presheaves are $\infty$-stacks. Higher topos theory can also be called homotopical sheaf theory, and $\infty$-stacks are the main objects of study. Sheaves of sets are discrete $\infty$-stacks, and stacks of groupoids are $1$-truncated $\infty$-stacks, see Section \ref{section smooth higher stacks} for a more detailed discussion. 

Embedding the category of diffeological spaces into the category of simplicial presheaves puts diffeological spaces into a homotopical framework that is both easy to work with and connects powerfully with the underlying homotopy theory of simplicial sets. For instance, using this embedding, we can immediately define a notion of cohomology on diffeological spaces, which we call $\infty$-stack cohomology. If $X$ is a diffeological space, and $A$ is an $\infty$-stack, whose $k$-fold delooping $\conj{W}^k A$ exists (notions we will explain in Section \ref{section smooth higher stacks}), then the $\infty$-stack cohomology of $X$ with coefficients in $A$ is given by the connected components of the derived mapping space 
\begin{equation*}
    \check{H}^k_\infty(X,A) = \pi_0 \R \Hom(X,\conj{W}^k A) = \pi_0 \u{\spre(\cart)}(QX, \conj{W}^k A),
\end{equation*}
where $Q$ is a cofibrant replacement functor for the \v{C}ech model structure. This functor $Q$ has a simple and explicit description in terms of the plots of $X$. In fact, it is the nerve of a diffeological category, see Proposition \ref{prop QX is nerve of a diff category}. In Corollary \ref{cor explicit description of infinity stack cohomology} we construct an explicit cochain complex whose cohomology gives the $\infty$-stack cohomology of $X$ with values in $A$ when $A$ is a diffeological abelian group.

There are two other examples of diffeological categories of diffeological spaces that act as ``resolutions" that already exist in the literature, namely the \v{C}ech groupoid $\check{C}(X)$ of \cite{krepski2021sheaves} and the gauge monoid $B//M$ of \cite{iglesias2020vcech}. These resolutions are used to define different notions of \v{C}ech cohomology for diffeological spaces, the Krepski-Watts-Wollbert diffeological \v{C}ech cohomology $\check{H}^k_{KWW}$ and the PIZ diffeological \v{C}ech cohomology $\check{H}^k_{PIZ}$ respectively. In Section \ref{section smooth higher stacks} we compare these various notions of diffeological \v{C}ech cohomologies and find that
\begin{equation*}
    \check{H}^0_{\infty}(X,A) \cong \check{H}^0_{KWW}(X,A) \cong \check{H}^0_{PIZ}(X,A), \qquad \check{H}^1_{\infty}(X,A) \cong \check{H}^1_{KWW}(X,A).
\end{equation*}
A full summary of known results about these cohomology theories is given after (\ref{eq diagram in cohomology}). It is currently unknown if all three of these cohomology theories are isomorphic in all degrees.

Now if $G$ is a diffeological group, there exists an $\infty$-stack called $\B G$, see Example \ref{ex delooping stack}, and $\infty$-stack cohomology of a diffeological space $X$ with values in $\B G$ is denoted $\check{H}^1_{\infty}(X, G)$. An immediate consequence of Theorem \ref{th diffeological principal bundles are infinity bundles} is the following result.

\begin{repcor}{cor nonabelian cohomology}
Given a diffeological space $X$ and a diffeological group $G$, there is an isomorphism of pointed sets
\begin{equation*}
    \check{H}^1_{\infty}(X, G) \cong \pi_0 \cat{DiffPrin}_G(X),
\end{equation*}
where $\pi_0 \cat{DiffPrin}_G(X)$ denotes the set of isomorphism classes of diffeological principal $G$-bundles on $X$, pointed at the isomorphism class of trivial bundles.
\end{repcor}

Thus $\infty$-stack cohomology of diffeological spaces also encompasses nonabelian cohomology. In the course of proving Theorem \ref{th diffeological principal bundles are infinity bundles} we will prove a bundle construction-type theorem, Theorem \ref{th diff cocycle theorem}, which gives an equivalence between a category of plotwise defined cocycles on $X$ and the category of diffeological principal $G$-bundles over $X$. This result may be of independent interest, and thus we prove it in Section \ref{section diff principal bundles}, before we need to introduce any abstract machinery.

It is our view that by applying tools from higher topos theory can be beneficial to the still young subject of diffeological spaces. In particular we believe that while the machinery of $\infty$-stack cohomology may come from an abstract framework, it can ultimately output important and down-to-earth results. Furthermore, higher topos theory already has definitions for higher principal bundles (called bundle gerbes) and connections on such objects inherently built into it. Pulling these definitions over to diffeological spaces and analyzing the results are the subject of future work. 

The paper is organized as follows. In Section \ref{section diffeological spaces}, we will give some background information about diffeological spaces. In Section \ref{section diff principal bundles} we turn to diffeological principal $G$-bundles. We define $G$-cocycles and prove a bundle construction-type theorem, Theorem \ref{th diff cocycle theorem}. In Section \ref{section diffeological spaces as concrete sheaves}, we give a brisk introduction to sheaf theory, and explain the Baez-Hoffnung Theorem \cite[Proposition 24]{baez2009convenient}. In Appendix \ref{section comparison of site structures}, we compare several categories of concrete sheaves on various sites, and show that they are all equivalent, proving that the category of diffeological spaces as given in Definition \ref{def diffeological space} is equivalent to the usual category of diffeological spaces considered in the literature. In Section \ref{section smooth higher stacks}, we will review the \v{C}ech model structure on simplicial presheaves over cartesian spaces. Proposition \ref{prop QX is nerve of a diff category} provides a cofibrant replacement of a diffeological space as the nerve of a diffeological category. We compare this diffeological category to two other diffeological categories $\check{C}(X)$ and $B//M$, which have been introduced in \cite{krepski2021sheaves} and \cite{iglesias2020vcech}, respectively. From these three diffeological categories, we obtain three separate notions of \v{C}ech cohomology for diffeological spaces, and compare them in Section \ref{section diffeological cech cohomologies}. In Section \ref{section principal diffeological bundles as principal infinity bundles}, we turn to the main result of this paper, that if $G$ is a diffeological group and $X$ is a diffeological space, then the nerve of the category of principal $G$-bundles on $X$ is weak homotopy equivalent to the nerve of the category of $G$-principal $\infty$-bundles over $X$.

\subsection*{Acknowledgements}

The author would like to thank Mahmoud Zeinalian for being a wonderful and kind advisor. He also thanks Patrick Iglesias-Zemmour, Jordan Watts and Matthew Cushman for various lively and helpful discussions.

\section{Diffeological Spaces} \label{section diffeological spaces}

In this section we give some background on diffeological spaces, all of which can be found in the textbook \cite{iglesias2013diffeology}.

\begin{Def} \label{def cart, good covers, parametrizations}
Let $M$ be a finite dimensional smooth manifold\footnote{We will assume throughout this paper that manifolds are Hausdorff and paracompact.}. We say a collection of subsets $\mathcal{U} = \{U_i \subseteq M \}_{i \in I}$ is an \textbf{open cover} if each $U_i$ is an open subset of $M$, and $\bigcup_{i \in I} U_i = M$. If $U$ is a finite dimensional smooth manifold diffeomorphic to $\R^n$ for some $n \in \mathbb{N}$, we call $U$ a \textbf{cartesian space}. We call $\mathcal{U} = \{U_i \subseteq M \}$ a \textbf{cartesian open cover} of a manifold $M$ if it is an open cover of $M$ and every $U_i$ is a cartesian space. We say that $\mathcal{U}$ is a \textbf{good open cover} if it is a cartesian open cover, and further every finite non-empty intersection $U_{i_0 \dots i_k} = U_{i_0} \cap \dots \cap U_{i_k}$ is a cartesian space. 

Let $\cat{Man}$ denote the category whose objects are finite dimensional smooth manifolds and whose morphisms are smooth maps. Let $\cart$ denote the full subcategory whose objects are cartesian spaces. Given a set $X$, let $\cat{Param}(X)$ denote the set of \textbf{parametrizations} of $X$, namely the collection of set functions $p: U \to X$, where $U \in \cat{Cart}$.
\end{Def}

\begin{Def} \label{def diffeological space}
A \textbf{diffeology} on a set $X$, consists of a collection $\mathscr{D}$ of parametrizations $p: U \to X$ satisfying the following three axioms:
\begin{enumerate}
    \item $\mathscr{D}$ contains all points $\R^0 \to X$,
    \item If $p: U \to X$ belongs to $\mathscr{D}$, and $f: V \to U$ is a smooth map, then $pf: V \to X$ belongs to $\mathscr{D}$, and
    \item If $\{U_i \subseteq U \}_{i \in I}$ is a good open cover of a cartesian space $U$, and $p: U \to X$ is a parametrization such that $p|_{U_i} : U_i \to X$ belongs to $\mathscr{D}$ for every $i \in I$, then $p \in \mathscr{D}$.
\end{enumerate}
A set $X$ equipped with a diffeology $\mathscr{D}$ is called a \textbf{diffeological space}. Parametrizations that belong to a diffeology are called \textbf{plots}. We say a set function $f: X \to Y$ between diffeological spaces is \textbf{smooth} if for every plot $p: U \to X$ in $\mathscr{D}_X$, the composition $pf: U \to Y$ belongs to $\mathscr{D}_Y$. We often denote the set of smooth maps from $X$ to $Y$ by $C^\infty(X,Y)$.

\begin{Rem}
In what follows, when we say that $f: X \to Y$ is a map of diffeological spaces or a smooth map, we mean that it is a smooth function in the above sense.
\end{Rem}

Denote the category whose objects are diffeological spaces and morphisms are smooth maps between them by $\cat{Diff}$. An isomorphism in this category is called a \textbf{diffeomorphism}.
\end{Def}

\begin{Rem} \label{rem why this def of diff spaces}
Note that Definition \ref{def diffeological space} is not the exact definition of diffeological spaces as usually given in the literature, such as \cite[Article 1.5]{iglesias2013diffeology}. However it is precisely the definition of diffeological space as defined in \cite[Definition 2.7]{pavlov2022numerable} and \cite[Notation 3.3.15]{sati2022equivariant}, as we will prove in Theorem \ref{thm diff = consh(cart)}. We will call the diffeological spaces defined in \cite[Article 1.5]{iglesias2013diffeology} \textbf{classical diffeological spaces} and denote their category by $\cat{Diff}'$. 

In Section \ref{section diffeological spaces as concrete sheaves}, leveraging \cite{baez2009convenient}, we will explain how to think of diffeological spaces as concrete sheaves on the site $(\cart, j_\good)$, namely cartesian spaces with the coverage of good open covers. Leveraging this perspective we show that $\cat{Diff}$ is equivalent to $\cat{Diff}'$ in Appendix \ref{section comparison of site structures}. 

However there are real advantages to using $\cat{Diff}$ over $\cat{Diff}'$, one of them being Lemma \ref{lem equiv bundles def}, which is false for $\cat{Diff}'$. There are other more technical advantages as well. In Section \ref{section smooth higher stacks}, we will consider $\cat{Diff}$ embedded into the category $\spre(\cart)$, which can be given the \v{C}ech model structure $\cechpre(\cart)$. If one uses $j_\open$ instead of $j_\good$ on $\cart$, and $\mathcal{U}$ is an arbitrary cartesian open cover of a cartesian space $U$, then there is no guarantee that $\check{C}(\mathcal{U})$ will be projective cofibrant. Using good open covers ensures that it is projective cofibrant, which is necessary for much of the theory to be developed. Using $j_\good$ also allows us to leverage Theorem \ref{th delooping sheaf of groups is infinity stack} and Theorem \ref{th delooping infinity stack is infinity stack}, which are vital to our results.
\end{Rem}

\begin{Ex} \label{ex manifold diffeology}
If $M$ is a finite dimensional smooth manifold, then the set of parametrizations $p: U \to M$ that are smooth in the sense of classical differential geometry forms a diffeology \cite[Article 4.3]{iglesias2013diffeology}. We call this the \textbf{manifold diffeology} of the underlying set of $M$. Further a map $f: M \to N$ of smooth manifolds is smooth in the classical sense if and only if it is smooth as a map of diffeological spaces. This implies that the functor $\D_{\cat{Man}}: \cat{Man} \to \cat{Diff}$ that sends a manifold to its underlying set equipped with the manifold diffeology is fully faithful.
\end{Ex}

\begin{Def}
We say that a map $i : X \to Y$ of diffeological spaces is an \textbf{induction} if for every plot $p: U \to Y$ there exists a plot $q: U \to X$ such that $p = i q$. 
\end{Def}

\begin{Def} \label{def subduction}
We say that a map $\pi : X \to Y$ of diffeological spaces is a \textbf{subduction} if it is surjective, and for every plot $p: U \to Y$, there exists a good open cover $\{U_i \subseteq U \}$, and plots $p_i: U_i \to X$ making the following diagram commute
\begin{equation}
    \begin{tikzcd}
	{U_i} & X \\
	{U} & Y
	\arrow["\pi", from=1-2, to=2-2]
	\arrow["{p_i}", from=1-1, to=1-2]
	\arrow[hook, from=1-1, to=2-1]
	\arrow["{p}"', from=2-1, to=2-2]
\end{tikzcd}
\end{equation}
\end{Def}

\begin{Lemma} \label{lem universal property of subduction}
	Let $X \xrightarrow{\pi} Y$ be a subduction, then a function $Y \xrightarrow{f} Z$ is smooth if and only if $f \pi$ is, and $f$ is a subduction if and only if $f \pi$ is.
\end{Lemma}

\begin{Lemma} \label{lem maps with sections are subductions}
If $f: X \to Y$ is a smooth map of diffeological spaces such that there exists a section, i.e. a smooth map $\sigma : Y \to X$ such that $f \sigma = 1_Y$, then $f$ is a subduction.
\end{Lemma}

\begin{Def}
Let $(X, \D_X)$ be a diffeological space and $A \xhookrightarrow{i} X$ a subset. Then consider the collection $\D^X_A$ of parametrizations $p : U \to A$ such that $ip : U \to X$ is a plot of $X$. It is not hard to see that this collection is a diffeology, which we call the \textbf{subspace diffeology} on $A$. Note that $i$ is an induction when $A$ is equipped with the subspace diffeology.
\end{Def}

\begin{Def}
Let $(X, \D_X)$ be a diffeological space and suppose that $\sim$ is an equivalence relation on the underlying set $X$. Let $X \xrightarrow{\pi} X/{\sim}$ denote the quotient function taking a point $x \in X$ to its equivalence class $[x] \in X /{\sim}$. Then consider the collection $\D_{\sim}^X$ of parametrizations $p : U \to X/{\sim}$ such that there exists a good open cover $\{U_i \subseteq U \}$ and plots $p_i: U_i \to X$ making the following diagram commute
\begin{equation}
        \begin{tikzcd}
	{U_i} & X \\
	{U} & X/{\sim}
	\arrow["\pi", from=1-2, to=2-2]
	\arrow["{p_i}", from=1-1, to=1-2]
	\arrow[hook, from=1-1, to=2-1]
	\arrow["{p}"', from=2-1, to=2-2]
\end{tikzcd}
\end{equation}
It is not hard to see that this forms a diffeology, which we call the \textbf{quotient diffeology} on $X/{\sim}$. Note that $\pi$ is a subduction when $X/{\sim}$ is equipped with the quotient diffeology.
\end{Def}

The category $\cat{Diff}$ of diffeological spaces is complete and cocomplete. Suppose $F: \cat{J} \to \cat{Diff}$ is a diagram of diffeological spaces. Then a parametrization $p: U \to \lim F$, (where we are taking $\lim F$ to be the limit of the underlying sets of the $F_j$) is a plot if and only if the composite $U \xrightarrow{p} \lim F \to F_j$ is a plot for every $j \in \cat{J}$.

Similarly a parametrization $p: \cat{J} \to \colim \, F$ is a plot if and only if there exists a good open cover $\{ U_i \to U \}$ and plots $U_i \xrightarrow{p_i} F_{j_i}$ for each $i$, such that the following diagram commutes:
\begin{equation*}
    \begin{tikzcd}
	{U_i} & F_{j_i} \\
	{U} & \colim \, F
	\arrow["\pi", from=1-2, to=2-2]
	\arrow["{p_i}", from=1-1, to=1-2]
	\arrow[hook, from=1-1, to=2-1]
	\arrow["{p}"', from=2-1, to=2-2]
\end{tikzcd}
\end{equation*}

\begin{Def}
Given any two diffeological spaces $X,Y$, consider the set $\cat{Diff}(X,Y)$ of smooth maps $f: X \to Y$. Let $\D_{X \to Y}$ denote the collection of parametrizations $p: U \to C^\infty(X,Y)$ such that the adjoint function $p^{\#} : U \times X \to Y$, defined by $p^{\#}(u,x) = p(u)(x)$ is smooth. This collection is a diffeology, which we call the \textbf{functional diffeology}.
\end{Def}

The functional diffeology makes $\cat{Diff}$ a Cartesian closed category. We will see in Section \ref{section diffeological spaces as concrete sheaves} that $\cat{Diff}$ is in fact a quasitopos.

To every diffeological space $X$, we can consider the category $\cat{Plot}(X)$, whose objects are plots $p_0: U_{p_0} \to X$ and whose morphisms $f_0 : p_1 \to p_0$ are smooth maps $f_0 : U_{p_1} \to U_{p_0}$ making the following diagram commute 
\begin{equation}
    \begin{tikzcd}
	{U_{p_1}} && {U_{p_0}} \\
	& X
	\arrow["{f_0}", from=1-1, to=1-3]
	\arrow["{p_1}"', from=1-1, to=2-2]
	\arrow["{p_0}", from=1-3, to=2-2]
\end{tikzcd}
\end{equation}

\begin{Lemma}[{\cite[Proposition 2.7]{CWS2014}}] \label{lem diff space is colimit of plots}
Given a diffeological space $X$, let $q: \cat{Plot}(X) \to \cat{Diff}$ denote the functor that sends a plot $p : U \to X$ to the cartesian space $U$ considered as a diffeological space with its manifold diffeology. Then $X \cong \colim \, q$.
\end{Lemma}

\section{Diffeological Principal Bundles} \label{section diff principal bundles}
In this section we introduce diffeological principal $G$-bundles, and prove a bundle construction-type theorem (Theorem \ref{th diff cocycle theorem}) that gives an equivalence between the groupoid of $G$-cocycles and the groupoid of diffeological principal $G$-bundles over a diffeological space $X$. This development will be needed in Section \ref{section principal diffeological bundles as principal infinity bundles}. One does not need to know anything about model categories or homotopy theory to understand this section.

\begin{Def}
A \textbf{diffeological group} is a group $G$ equipped with a diffeology $\D_G$ such that the multiplication map $m: G \times G \to G$, 	and inverse map $i: G \to G$ are smooth.
\end{Def}

\begin{Def}
A right \textbf{diffeological group action} of a diffeological group $G$ on a diffeological space $X$ is a smooth map $\rho: X \times G \to X$ such that $\rho(x, e_G) = x$, and $\rho(\rho(x,g), h) = \rho(x, gh)$, where $e_G$ denotes the identity element of $G$. We will often keep $\rho$ implicit and denote such an action by $x \cdot g$.
\end{Def}

\begin{Def} \label{def diff princ bundle}
Let $G$ be a diffeological group, and $P$ be a diffeological right $G$-space. A map $\pi: P \to X$ of diffeological spaces is a \textbf{diffeological principal $G$-bundle} if:
\begin{enumerate}
    \item the map $\pi: P \to X$ is a subduction, and
    \item the map $\text{act}: P \times G \to P \times_X P$ defined by $(p,g) \mapsto (p, p \cdot g)$, which we call the \textbf{action map} is a diffeomorphism.
\end{enumerate}
\end{Def}

A map of diffeological principal $G$-bundles $P \to P'$ over $X$ is a diagram
\begin{equation*}
    \begin{tikzcd}
	P && {P'} \\
	& X
	\arrow["{\pi'}", from=1-3, to=2-2]
	\arrow["\pi"', from=1-1, to=2-2]
	\arrow["f", from=1-1, to=1-3]
\end{tikzcd}
\end{equation*}
where $f$ is a $G$-equivariant smooth map. A diffeological principal $G$-bundle $P$ is said to be \textbf{trivial} if there exists an isomorphism $\varphi: X \times G \to P$, called a \textbf{trivialization}, where $\text{pr}_1: X \times G \to X$ is the product bundle. Let $\cat{DiffPrin}_G(X)$ denote the category of diffeological principal $G$-bundles over a diffeological space $X$. The following properties of diffeological principal $G$-bundles are not hard to prove, see \cite[Chapter 8]{iglesias2013diffeology}.

\begin{Lemma} \label{lem props of principal bundles}
Let $G$ be a diffeological group and $\pi: P \to X$ a diffeological principal $G$-bundle, then we have the following:
\begin{enumerate}
    \item if $f: Y \to X$ is a smooth map, then the pullback $f^* P \to Y$ is a diffeological principal $G$-bundle,
    \item if there is a section $s: X \to P$, namely $\pi \circ s = 1_X$, then $P$ is trivial,
    \item if $f: P \to P'$ is a map of diffeological principal $G$-bundles over a diffeological space $X$, then it is an isomorphism.
\end{enumerate}
\end{Lemma}

By Lemma \ref{lem props of principal bundles}.(3), the category $\cat{DiffPrin}_G(X)$ is a groupoid for every diffeological group $G$ and every diffeological space $X$.

\begin{Prop}[{\cite[Article 8.19]{iglesias2013diffeology}}] \label{prop bundles on cartesian spaces are trivial}
Given a diffeological group $G$ and a diffeological principal $G$-bundle $\pi : P \to U$, where $U$ is a cartesian space, there exists a trivialization $\varphi: U \times G \to P$.
\end{Prop}

\begin{Lemma} \label{lem equiv bundles def}
Given a diffeological group $G$ and a diffeological right $G$-space $P$, a map $\pi: P \to X$ is a diffeological principal $G$-bundle if and only if for every plot $p_0: U_{p_0} \to X$, the pullback $p_0^*P$ is trivial, and the map $\text{act}: P \times G \to P \times_X P$ defined by $(p,g) \mapsto (p,p \cdot g)$ is a diffeomorphism.
\end{Lemma}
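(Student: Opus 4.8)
The plan is to exploit the fact that condition (2)—the shear map $P \times G \to P \times_X P$, $(p,g) \mapsto (p, p \cdot g)$, being a diffeomorphism—appears verbatim in both Definition \ref{def diff princ bundle} and in this statement. Assuming this condition throughout, the entire content of the lemma reduces to proving that $\pi$ is a subduction if and only if $q^*P \to U$ is trivial over a covering family of $U$ (i.e. locally trivial) for every plot $q : U \to X$. I would prove the two implications separately, noting that $\pi$ is smooth by hypothesis so that $\pi_* \D_P \subseteq \D_X$ holds automatically (the pushforward is the smallest diffeology making $\pi$ smooth).

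For the forward direction, I would begin with a plot $q : U \to X$ and use that, since $\pi$ is a subduction, $q \in \D_X = \pi_* \D_P$. By the explicit description of the pushforward diffeology, $q$ locally factors through $\pi$: there is a covering family $\{U_i \hookrightarrow U\}$ and plots $\sigma_i : U_i \to P$ with $\pi \sigma_i = q|_{U_i}$. Each local lift $\sigma_i$ determines a $G$-equivariant smooth map $\tau_i : U_i \times G \to q^*P|_{U_i}$, $(u,g) \mapsto (u, \sigma_i(u) \cdot g)$, which lands in the fibre product because the action is fibrewise over $X$. The crux is to verify $\tau_i$ is a diffeomorphism: given $(u,p) \in U_i \times_X P$ one has $\pi(p) = q(u) = \pi(\sigma_i(u))$, so $(\sigma_i(u), p) \in P \times_X P$, and applying the inverse of the shear diffeomorphism extracts the unique $g$ with $p = \sigma_i(u) \cdot g$ as a smooth function of $(u,p)$. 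Thus $\tau_i$ is a local trivialization and $q^*P$ is locally trivial.

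For the converse, it remains to establish $\D_X \subseteq \pi_* \D_P$ and surjectivity of $\pi$. Given any plot $q$, local triviality supplies a covering $\{U_i\}$ and trivializations $U_i \times G \xrightarrow{\sim} q^*P|_{U_i}$; composing the unit section $u \mapsto (u,e)$ with the trivialization and then projecting to $P$ yields plots $\phi_i : U_i \to P$ with $\pi \phi_i = q|_{U_i}$. Hence $q$ locally factors through $\pi$, so $q \in \pi_* \D_P$, giving $\pi_* \D_P = \D_X$. Surjectivity follows by applying this to the constant plot at an arbitrary $x \in X$: local triviality makes that pullback nonempty (since $G$ contains $e$), producing a point of $P$ over $x$. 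Therefore $\pi$ is a subduction.

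I expect the main obstacle to be precisely the smoothness of the inverse trivialization $\tau_i^{-1}$ in the forward direction. This is the one place where the full strength of the hypothesis—that the shear map is a \emph{diffeomorphism} rather than merely a bijection—is indispensable, since it is what allows the group-valued coordinate $g$ to be recovered smoothly from $(u,p)$. Everything else, namely unwinding the pushforward diffeology and checking $G$-equivariance and fibrewise compatibility, is routine bookkeeping.
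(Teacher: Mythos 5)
Your backward direction is correct and is essentially the paper's own argument (the paper even gets away with the single-set cover $\{U\}$ and a global section, since the hypothesis gives a global trivialization $q^*P \cong U \times G$; your explicit treatment of surjectivity via constant plots is a point the paper glosses over). The shear-map bookkeeping in the forward direction is also fine: the fibrewise property is implicit in the shear map being well defined, and your observation that smoothness of $\tau_i^{-1}$ is exactly where the shear \emph{diffeomorphism} hypothesis is used is accurate.

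However, there is a genuine gap in the forward direction. The lemma asserts that $q^*P$ is \emph{trivial}, i.e.\ isomorphic to $U \times G \to U$ globally, whereas your argument only produces trivializations $\tau_i$ over the members $U_i$ of a covering family, i.e.\ \emph{local} triviality. Passing from local to global triviality over the cartesian space $U$ is not routine: it amounts to showing that the transition cocycle $g_{ij} \colon U_{ij} \to G$ determined by your local sections $\sigma_i$ is a coboundary. For a Lie group this follows from classical classification theory and contractibility of $U$, but for an arbitrary diffeological group $G$ one has no smooth partitions of unity, no exponential map, and no classifying-space theory available off the shelf, so this step carries real content. It is precisely what the paper outsources to the literature: its forward direction cites \cite[Proposition 5.3.(4)]{krepski2021sheaves} for the fact that $q^*P \to U$ is a diffeological principal $G$-bundle over a cartesian space, together with the (nontrivial) fact that such bundles are trivial. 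Note also that the full strength of global triviality is needed downstream, e.g.\ in Lemma \ref{lem bundles over cart spaces are trivial} and in the essential surjectivity part of Proposition \ref{lem BG is a stack on cart}, so your weaker conclusion cannot simply be substituted. To repair your proof you would either need to prove the coboundary statement for diffeological $G$ or, as the paper does, invoke the cited result.
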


\begin{proof}
$(\Rightarrow)$ If $\pi : P \to X$ is a diffeological principal $G$-bundle, and $p_0: U_{p_0} \to X$ is a plot, then by Lemma \ref{lem props of principal bundles} $p_0^* P$ is a diffeological principal $G$-bundle over $U_{p_0}$. Since $U_{p_0}$ is cartesian, by Proposition \ref{prop bundles on cartesian spaces are trivial}, $p_0^* P$ is trivial, with trivialization $\varphi_{p_0}: U_{p_0} \times G \to p_0^*P$.

$(\Leftarrow)$ If $p_0^* P$ is trivial for every plot $p_0 : U_{p_0} \to X$, then there exists a trivialization $\varphi_{p_0}: U_{p_0} \times G \to p_0^* P$, and thus we obtain the following commutative diagram
\begin{equation*}
    \begin{tikzcd}
	{U_{p_0} \times G} & {p_0^*P} & P \\
	& U_{p_0} & X
	\arrow["p_0"', from=2-2, to=2-3]
	\arrow["\pi", from=1-3, to=2-3]
	\arrow[from=1-2, to=2-2]
	\arrow["{\psi_{p_0}}", from=1-2, to=1-3]
	\arrow[from=1-1, to=2-2]
	\arrow["\varphi_{p_0}", from=1-1, to=1-2]
	\arrow["{1_U \times e_G}", curve={height=-12pt}, from=2-2, to=1-1]
\end{tikzcd}
\end{equation*}
Since this is true for every plot $p_0: U_{p_0} \to X$, then by taking $U_{p_0}$ as a good cover of itself, $pi$ can be seen to be a subduction.
\end{proof}

\begin{Lemma} \label{lem free transitive action equiv to diffeomorphism}
Condition (2) of Definition \ref{def diff princ bundle} is equivalent to the condition that $G$ acts on the fibers of $\pi$ freely and transitively.
\end{Lemma}

\begin{proof}
If $\text{act}: P \times G \to P \times_X P$ is a diffeomorphism, then $G$ clearly acts on the fibers of $\pi$ freely and transitively. Now suppose $G$ acts on the fibers of $\pi$ freely and transitively. The map $\text{act}: P \times G \to P \times_X P$ is smooth. Since the action is free and transitive it means also that the map $\text{act}$ is a bijection. We need then only to show that the inverse function is smooth. Namely if $\langle q, q' \rangle : U \to P \times_X P$ is a plot, where $q, q': U \to P$ are plots, then we wish to show that the composite function $\text{act}^{-1} \langle q, q' \rangle$ is a plot of $P \times G$. Now this is the set map $u \mapsto (q(u), \text{diff}(q(u), q'(u)))$, where $\text{diff}: P \times_X P \to G$ is the composite map $\text{proj}_2 \text{act}^{-1}$. This is a plot of $P \times G$ if it is a plot in both factors. Obviously it is a plot of the first factor, so we need only show that it is a plot of the second factor. This is the map that we will denote by $\tau$, namely $\tau(u) = \text{diff}(q(u), q'(u))$. Now, let $r = \pi q = \pi q'$ denote the plot $r: U \to X$. We have the following commutative cube
\begin{equation}
\begin{tikzcd}
	U &&& {U \times G} \\
	& {(U \times G) \times_U (U \times G)} &&&& P \\
	&& {P \times_X P} & U \\
	& {U \times G} &&&& X \\
	&& P
	\arrow["\pi", from=2-6, to=4-6]
	\arrow["\pi"', from=5-3, to=4-6]
	\arrow[from=3-3, to=5-3]
	\arrow[from=3-3, to=2-6]
	\arrow["{\varphi_{q'}}", from=1-4, to=2-6]
	\arrow["r"{pos=0.3}, from=3-4, to=4-6]
	\arrow["{\varphi_q}"{description}, from=4-2, to=5-3]
	\arrow[from=2-2, to=4-2]
	\arrow[from=4-2, to=3-4]
	\arrow[from=1-4, to=3-4]
	\arrow[from=2-2, to=1-4]
	\arrow["k"{description}, from=2-2, to=3-3]
	\arrow["{\langle q, 1_U \rangle}"{description}, curve={height=18pt}, from=1-1, to=4-2]
	\arrow["{\langle 1_U, q'\rangle}"{description}, curve={height=-18pt}, from=1-1, to=1-4]
	\arrow["h"{description}, dashed, from=1-1, to=2-2]
\end{tikzcd}
\end{equation}
and $(U \times G) \times_U (U \times G) \cong U \times G \times G$. Note that $\langle q, q' \rangle: U \to P \times_X P$ factors as $kh$, where $k$ is the map $U \times G \times G \to P \times_X P$ induced by the plotwise trivializations of $P$ along $q$ and $q'$. Now we can see that $\tau$ factors as
\begin{equation*}
    \begin{tikzcd}
	U & {P \times_X P} & {P \times G} & G \\
	& {U \times G \times G} & {U \times G}
	\arrow["{\langle q,q'\rangle}"', from=1-1, to=1-2]
	\arrow["{\text{act}^{-1}}"', from=1-2, to=1-3]
	\arrow["{\text{proj}_2}"', from=1-3, to=1-4]
	\arrow["\tau", curve={height=-18pt}, from=1-1, to=1-4]
	\arrow["h"', curve={height=12pt}, from=1-1, to=2-2]
	\arrow["k"', from=2-2, to=1-2]
	\arrow["n"', from=2-2, to=2-3]
	\arrow["{\text{proj}_2}"', curve={height=12pt}, from=2-3, to=1-4]
\end{tikzcd}
\end{equation*}
where $n: U \times G \times G \to U \times G$ is the smooth map $(u,g, g') \mapsto (u, (g')^{-1} g)$. Thus $\tau$ is a composite of smooth maps, and therefore is a plot.
\end{proof}

Diffeological principal $G$-bundles are a true generalization of classical principal $G$-bundles in the following sense. If $M$ is a finite dimensional smooth manifold, $G$ is a Lie group, and $\pi : P \to M$ is a classical principal $G$-bundle on $M$, then we can consider the map of diffeological space $\D_{\cat{Man}}(\pi) : \D_{\cat{Man}}(P) \to \D_{\cat{Man}}(M)$, given by the functor $\D_{\cat{Man}}: \cat{Man} \to \cat{Diff}$ from Example \ref{ex manifold diffeology}. It turns out that this map of diffeological spaces is a diffeological principal $\D_{\cat{Man}}(G)$-bundle, in fact more is true.

\begin{Prop}[{\cite[Theorem 3.1.7]{waldorf2012transgression}}]
If $M$ is a finite dimensional smooth manifold, $G$ is a Lie group, and $\cat{Prin}_G(M)$ denotes the groupoid of classical principal $G$-bundles over $M$, then the functor 
\begin{equation}
    \D_M : \cat{Prin}_G(M) \to \cat{DiffPrin}_G(M)
\end{equation}
that assigns to a classical principal $G$-bundle $\pi$ the corresponding diffeological principal $\D_{\cat{Man}}(G)$-bundle, is an equivalence of groupoids.
\end{Prop}

\begin{Rem}
In what follows we will not use the notation $\D_{\cat{Man}}(M)$ to distinguish between a manifold and its corresponding diffeological space, but will instead rely on context.
\end{Rem}

Classically, principal $G$-bundles over a smooth manifold $M$ are classified up to isomorphism by homotopy classes of maps from $M$ to a classifying space $BG$. There has been some work \cite{christensen2021smooth}, \cite{magnot2017diffeology} extending this result to diffeology. Since diffeological spaces are so much more general than smooth manifolds, one must only consider numerable principal $G$-bundles to classify them in this sense as in the above references.

However, there is another way of classifying principal $G$-bundles over a smooth manifold $M$, that produces the whole groupoid of principal $G$-bundles, rather than just the isomorphism classes, see Example \ref{ex delooping stack}. One goal of this paper is to extend this idea to diffeological principal $G$-bundles. This will be achieved in Section \ref{section principal diffeological bundles as principal infinity bundles}. In order to classify diffeological principal $G$-bundles in this way, we must understand how to construct bundles from their cocycles and vice versa. 

\begin{Def} \label{def cocycle}
Given a diffeological space $X$ and a diffeological group $G$, call a collection $g = \{g_{f_0} \}$ of smooth maps $g_{f_0} : U_{p_1} \to G$ indexed by maps of plots $f_0: U_{p_1} \to U_{p_0}$ of $X$ a \textbf{$G$-cocycle} if for every pair of composable plot maps of $X$
\begin{equation*}
    U_{p_2} \xrightarrow{f_1} U_{p_1} \xrightarrow{f_0} U_{p_0}
\end{equation*}
it follows that
\begin{equation} \label{eqn diff cocycle condition}
    g_{f_0 f_1} = (g_{f_0} \circ f_1) \cdot g_{f_1}.
\end{equation}
We call (\ref{eqn diff cocycle condition}) the \textbf{diffeological $G$-cocycle condition}. 

Given two $G$-cocycles, $g,g'$, we say a collection $h = \{h_{p_0}\}$ of smooth maps $h_{p_0}: U_{p_0} \to G$ indexed by plots of $X$ is a \textbf{morphism of $G$-cocycles} $h : g \to g'$ if for every map $f_0 :U_{p_1} \to U_{p_0}$ of plots of $X$, it follows that
\begin{equation} \label{eqn map of diff cocycles}
    g'_{f_0} \cdot h_{p_1} = (h_{p_0} \circ f_0) \cdot g_{f_0}.
\end{equation}
\end{Def}

\begin{Rem}
The definition of diffeological $G$-cocycles is reminiscent of classical $G$-cocycles (usually written $g_{ij}$ for some cover $\mathcal{U} = \{U_i \subseteq M \}$ of a manifold), but also seems to have come from nowhere. In Section \ref{section principal diffeological bundles as principal infinity bundles} we will see how one comes to this definition purely from the framework of higher topos theory.
\end{Rem}

Let $\cat{Coc}(X,G)$ denote the category whose objects are cocycles of $X$ with values in $G$ and whose morphisms are morphisms of cocycles. Composition is defined as follows. If $h: g \to g'$ and $h': g' \to g''$ are morphisms of cocycles, then let $(h' \circ h)$ denote the morphism of cocycles defined plotwise by $(h' \circ h)_{p_0} = h'_{p_0} \cdot h_{p_0}$ for a plot $p_0 : U_{p_0} \to X$. Let us show that $(h' \circ h)$ is actually a morphism of cocycles. A morphism of cocycles $h: g \to g'$ implies that if $f_0: U_{p_1} \to U_{p_0}$ is a map of plots, then 
\begin{equation*}
    g'_{f_0} h_{p_1} = (h_{p_0} \circ f_0) g_{f_0}
\end{equation*}
and $h': g' \to g''$ implies that
\begin{equation*}
    g''_{f_0} h'_{p_1} = (h'_{p_1} \circ f_0) g'_{f_0}.
\end{equation*}
Thus 
\begin{equation*}
g''_{f_0} h'_{p_1} = (h'_{p_0} \circ f_0) (h_{p_0} \circ f_0) g_{f_0} h_{p_1}^{-1}.
\end{equation*}
and therefore
\begin{equation*}
 g''_{f_0} (h'_{p_1} \cdot h_{p_1}) = ((h'_{p_0} \cdot h_{p_0}) \circ f_0) g_{f_0}   
\end{equation*}
This proves that $(h' \circ h)$ is a morphism of cocycles. Note that $\cat{Coc}(X,G)$ is a groupoid by taking $(h^{-1})_p = h_p^{-1}$.

Given a diffeological space $X$ and a $G$-cocycle $g$ on $X$, we wish to construct a diffeological principal $G$-bundle $\pi : P \to X$, such that we can recover the $G$-cocycle $g$ by looking at plotwise trivializations of $P$. We will do this as follows. Consider the diffeological space
\begin{equation}
    \widehat{P} = \coprod_{p_0 \in \cat{Plot}(X)} U_{p_0} \times G.
\end{equation}

We label the elements of $\widehat{P}$ by $(x_{p_0},k_0)$, where $p_0: U_{p_0} \to X$ is a plot, $x_{p_0} \in U_{p_0}$, and $k_0 \in G$. We write $(x_{p_1}, k_1) \sim (x_{p_0}, k_0)$ if there exists a map $f_0: U_{p_1} \to U_{p_0}$ of plots such that $f_0(x_{p_1}) = x_{p_0}$ and $k_0 = g_{f_0}(x_{p_1}) \cdot k_1$. This relation is reflexive and transitive, but not symmetric, so abuse notation by letting $\sim$ also denote the smallest equivalence relation containing $\sim$. In other words we say that $(x_{p_1}, k_1) \sim (x_{p_0}, k_0)$ if and only if there exists a finite zig-zag of plot maps
\begin{equation} \label{eq zig-zag of plot maps}
(x_{p_1}, k_1) \xrightarrow{f_0} (x_{q_0}, h_0) \xleftarrow{f_1} (x_{q_1}, h_1) \xrightarrow{f_2} \dots \xleftarrow{f_n} (x_{q_n}, h_n) \xrightarrow{f_{n+1}} (x_{p_0}, k_0),
\end{equation}
such that $f_0(x_{p_1}) = x_{q_0}$, $h_0 = g_{f_0}(x_{p_1}) \cdot k_1$, $f_1(x_{q_1}) = x_{q_0}$, $h_0 = g_{f_1}(x_{q_1}) \cdot h_1$, $\dots$, $f_{n+1}(x_{q_n}) = x_{p_0}$, and $k_0 = g_{f_{n+1}}(x_{q_n}) \cdot h_n$. By setting $f_0$ or $f_{n+1}$ equal to the identity one can obtain any kind of zig-zag from one of the form above.

Let $P = \widehat{P} / \sim$, and denote its elements by $[x_{p_0}, k_0]$. There is a smooth map $\pi : P \to X$ given by $\pi[x_{p_0}, k_0] = p_0(x_{p_0})$. This map is well-defined, because if $(x_{p_1}, k_1) \sim (x_{p_0}, k_0)$, then there is a finite zig-zag of plot maps connecting them as above, so 
\begin{equation}
    p_1(x_{p_1}) = q_0(f_0(x_{p_1})) = q_0(x_{q_0}) = q_0(f_1(x_{q_1})) = q_1(x_{q_1}) = \dots = q_n(x_{q_n}) = q_n(f_{n+1}(x_{q_n})) = p_0(x_{p_0}).
\end{equation}
Thus $\pi : P \to X$ is well defined. We let $\pi = \cat{Cons}(g)$, short for construction. 

As sets, we can think of $\widehat{P}$ as the objects of a category, and a morphism in this category looks like
\begin{equation*}
    (x_{p_0}, k_0) \xrightarrow{f_0} (x_{p_1}, k_1)
\end{equation*}
where $f_0: U_{p_1} \to U_{p_0}$ is a plot map such that $f_0(x_{p_1}) = x_{p_0}$ and $k_1 = g_{f_0}(x_{p_1}) \cdot k_0$. Then $P \cong \pi_0 \widehat{P}$. This might be a helpful way to think about this construction, and we will say more about this observation in Section \ref{section principal diffeological bundles as principal infinity bundles}.

\begin{Prop} \label{prop construction of bundle}
Given a diffeological group $G$, a diffeological space $X$ and a $G$-cocycle $g$ on $X$, the map $\pi : P \to X$ where $\pi = \cat{Cons}(g)$ is a diffeological principal $G$-bundle.
\end{Prop}

\begin{proof}
First let us show that there is an action of $G$ on $P$. Let the action be defined by $[x_{p_0}, k_0] \cdot g = [x_{p_0}, k_0 \cdot g]$. This action is well defined, as suppose a zig-zag of the form (\ref{eq zig-zag of plot maps}) identifies $(x_{p_1}, k_1)$ with $(x_{p_0}, k_0)$. Then the same zig-zag with $h_i$ replaced with $h_i \cdot g$ will identify $(x_{p_1}, k_1 \cdot g)$ with $(x_{p_0}, k_0 \cdot g)$.

Now let us show that if $p_1: U_{p_1} \to X$ is a plot, then there exists a $G$-equivariant diffeomorphism $\varphi_{p_1} : U_{p_1} \times G \to p_1^* P$. First note that
\begin{equation*}
p_1^*P = \left \{ (x_{p_1},[x_{p_0},k_0]) \in U_{p_1} \times P \, : \, p_1(x_{p_1}) = \pi[x_{p_0}, k_0] \right \}
\end{equation*}
So given a point $(x_{p_1},[x_{p_0}, k_0]) \in p_1^*P$, we have a commutative diagram
\begin{equation*}
\begin{tikzcd}
	{*} & U_{p_0} \\
	U_{p_1} & X
	\arrow["p_1"', from=2-1, to=2-2]
	\arrow["p_0", from=1-2, to=2-2]
	\arrow["x_{p_1}"', from=1-1, to=2-1]
	\arrow["x_{p_0}", from=1-1, to=1-2]
	\arrow["x"{description}, from=1-1, to=2-2]
\end{tikzcd}
\end{equation*}
where $*$ denotes the plot $* \cong \R^0 \xrightarrow{x} X$. We can think of this diagram as a zig-zag
\begin{equation*}
    (x_{p_1}, g_{x_{p_1}}(*)g_{x_{p_0}}^{-1}(*) k_0) \xleftarrow{x_{p_1}} (*, g_{x_{p_0}}^{-1}(*)k_0) \xrightarrow{x_{p_0}} (x_{p_0}, k_0).
\end{equation*}
Thus we can identify 
\begin{equation*}
 [x_{p_1}, g_{x_{p_1}}(*) g_{x_{p_0}}^{-1}(*) k_0] = [x_{p_0}, k_0].   
\end{equation*}

Now let us define a map $\varphi_{p_1}: p_1^*P \to  U_{p_1} \times G$ by $\varphi(x_{p_1},[x_{p_0}, k_0]) = (x_{p_1}, g_{x_{p_1}}(*) g_{x_{p_0}}^{-1}(*)k_0)$. Let us show that this is well defined. First suppose that $(x_{q_0}, l_0) \sim (x_{p_0}, k_0)$ is identified by a single morphism, i.e. there exists a plot map $f_0: U_{q_0} \to U_{p_0}$ such that $f_0(x_{q_0}) = x_{p_0}$ and $k_0 = g_{f_0}(x_{q_0}) \cdot l_0$, then the following diagram commutes
\begin{equation*}
    \begin{tikzcd}
	{*} & U_{p_0} \\
	U_{q_0} & X
	\arrow["q_0"', from=2-1, to=2-2]
	\arrow["p_0", from=1-2, to=2-2]
	\arrow["x_{q_0}"', from=1-1, to=2-1]
	\arrow["x_{p_0}", from=1-1, to=1-2]
	\arrow["f_0"{description}, from=2-1, to=1-2]
\end{tikzcd}
\end{equation*}
However, we can also think of $x_{p_0}$ and $x_{q_0}$ as maps of plots. Thus from the cocycle condition (\ref{eqn diff cocycle condition}), we have 
\begin{equation*}
    g_{(f_0 \circ x_{q_0})}(*) = (g_{f_0} \circ x_{q_0})(*) \cdot g_{x_{q_0}}(*).
\end{equation*}
Now let us abuse notation for the rest of this proof by writing $g_{x_{p_0}}$ for $g_{x_{p_0}}(*)$. Thus we have
\begin{equation}
    g_{x_{p_0}} = g_{f_0}(x_{q_0}) \cdot g_{x_{q_0}}.
\end{equation}

Therefore
\begin{equation*}
    \varphi(x_{p_1}, [x_{p_0}, k_0]) = (x_{p_1}, g_{x_{p_1}} g_{x_{p_0}}^{-1} k_0) = (x_{p_1}, g_{x_{p_1}} g_{x_{p_0}}^{-1} g_{f_0}(x_{q_0}) l_0) = (x_{p_1}, g_{x_{p_1}} g_{x_{q_0}}^{-1} l_0) = \varphi(x_{p_1}, [x_{q_0}, l_0]).
\end{equation*}

Now if $(x_{q_0}, l_0)$ and $(x_{p_0}, k_0)$ are connected by an arbitrary finite zig-zag, then using the above argument on every morphism in the zig-zag shows that $\varphi(x_{p_1}, [x_{q_0}, l_0]) = \varphi(x_{p_1}, [x_{p_0}, k_0])$. So $\varphi$ is well defined. It is also $G$-equivariant, as $\varphi(x_{p_1}, [x_{p_0}, k_0] \cdot g) = \varphi(x_{p_1}, [x_{p_0}, k_0 g]) = (x_{p_1}, g_{x_{p_1}} g_{x_{p_0}}^{-1} k_0 g) = (x_{p_1},g_{x_{p_1}} g_{x_{p_0}}^{-1} k_0) \cdot g$.

We define an inverse $\varphi^{-1}_{p_1}: U_{p_1} \times G \to p_1^*P$ by $(x_{p_1}, g) \mapsto (x_{p_1}, [x_{p_1}, g])$. This is clearly $G$-equivariant, and it is easy to see that $\varphi_{p_1} \circ \varphi^{-1}_{p_1} = 1_{U_{p_1} \times G}$ and $\varphi^{-1}_{p_1} \circ \varphi_{p_1} = 1_{p_1^* P}$. Thus $P$ is plotwise trivial.

Now let us show that $G$ acts on the fibers of $\pi$ freely and transitively. However this is immediate, as a fiber of $\pi$ is in particular a pullback:
\begin{equation*}
    \begin{tikzcd}
	{\pi^{-1}(x)} & P \\
	{*} & X
	\arrow["x"', from=2-1, to=2-2]
	\arrow["\pi", from=1-2, to=2-2]
	\arrow[from=1-1, to=2-1]
	\arrow[from=1-1, to=1-2]
	\arrow["\lrcorner"{anchor=center, pos=0.125}, draw=none, from=1-1, to=2-2]
\end{tikzcd}
\end{equation*}
and every constant map $* \to X$ is a plot, and we've already shown that for any plot this pullback is trivial, and thus $\pi^{-1}(x) \cong * \times G \cong G$, which acts freely and transitively on itself by right multiplication. Thus by Lemma \ref{lem free transitive action equiv to diffeomorphism}, the action map $P \times G \to P \times_X P$ is a diffeomorphism. Thus by Lemma \ref{lem equiv bundles def}, $P \xrightarrow{\pi} X$ is a diffeological principal $G$-bundle. 
\end{proof}

\begin{Rem} \label{rem plotwise trivialization convention}
As it is convenient for the rest of this section, we rename the plotwise trivialization $\varphi^{-1}_{p_0} : U_{p_0} \times G \to p_0^* \cat{Cons}(g)$ to $\varphi_{p_0}$. 
\end{Rem}

\begin{Prop} \label{prop construction of morphism of bundles}
Given a diffeological space $X$, a diffeological group $G$, $G$-cocycles $g$, $g'$ on $X$, with corresponding diffeological principal $G$-bundles $P = \cat{Cons}(g)$ and $P' = \cat{Cons}(g')$ and a morphism $h : g \to g'$ of $G$-cocycles, there is an induced morphism $\widetilde{h} = \cat{Cons}(h): P \to P'$ of diffeological principal $G$-bundles. Furthermore, for every plot $p_0: U_{p_0} \to X$, if $\widetilde{h}: P \to P'$ is a morphism of diffeological principal $G$-bundles, then we obtain a commutative diagram
\begin{equation} \label{eq map of bundles under trivialization}
    \begin{tikzcd}
	{U_{p_0} \times G} && {U_{p_0} \times G} \\
	{p_0^*P} && {p_0^*P'} \\
	P && {P'} \\
	& X
	\arrow["{\psi_{p_0}}"', from=2-1, to=3-1]
	\arrow["{\varphi_{p_0}}"', from=1-1, to=2-1]
	\arrow["{\widetilde{h}_{p_0}}", from=1-1, to=1-3]
	\arrow["{\widehat{h}_{p_0}}", from=2-1, to=2-3]
	\arrow["\pi"', from=3-1, to=4-2]
	\arrow["{\varphi'_{p_0}}", from=1-3, to=2-3]
	\arrow["{\psi'_{p_0}}", from=2-3, to=3-3]
	\arrow["{\pi'}", from=3-3, to=4-2]
	\arrow["{\widetilde{h}}", from=3-1, to=3-3]
\end{tikzcd}
\end{equation}
where $\widetilde{h}_{p_0}(x_{p_0}, k_0) = (x_{p_0}, h_{p_0}(x_{p_0}) \cdot k_0)$.
\end{Prop}

\begin{proof}
Define a map $\widetilde{h} = \cat{Cons}(h) : P \to P'$ as follows. Given a point $[x_{p_0}, k_0]$ in $P$, let $\widetilde{h}([x_{p_0}, k_0]) = [x_{p_0}, h_{p_0}(x_{p_0}) \cdot k_0]$. Let us show that this map is well defined. Suppose that $[x_{p_1}, k_1] = [x_{p_0}, k_0]$, we want to show that $[x_{p_1}, h_{p_1}(x_{p_1}) k_1] = [x_{p_0}, h_{p_0}(x_{p_0}) k_0]$. Suppose that $(x_{p_1}, k_1)$ and $(x_{p_0}, k_0)$ are connected by a single morphism, i.e. there exists a plot map $f_0: U_{p_1} \to U_{p_0}$ such that $f_0(x_{p_1}) = x_{p_0}$ and $k_0 = g_{f_0}(x_{p_1}) k_1$. Then as elements of $P'$, we have
\begin{equation}
    \begin{aligned}
        \widetilde{h}([x_{p_0}, k_0]) & = [x_{p_0}, h_{p_0}(x_{p_0}) k_0] \\
        & = [x_{p_0}, h_{p_0}(f_0(x_{p_1})) g_{f_0}(x_{p_1})k_1]\\
        &= [x_{p_0}, g'_{f_0}(x_{p_1}) h_{p_1}(x_{p_1}) k_1] \\
        &= [x_{p_1}, h_{p_1}(x_{p_1}) k_1] \\
        &= \widetilde{h}([x_{p_1}, k_1]),
    \end{aligned}
\end{equation}
where the third equality follows from (\ref{eqn map of diff cocycles}). Thus if $(x_{p_1}, k_1)$ and $(x_{p_0}, k_0)$ are connected by an arbitrary finite zig-zag, then using the above argument on every morphism in the zig-zag shows that $\widetilde{h}([x_{p_1}, k_1]) = \widetilde{h}([x_{p_0}, k_0])$. Thus $\widetilde{h}$ is well-defined, and clearly smooth. It is also $G$-equivariant, as 
\begin{equation}
    \begin{aligned}
        \widetilde{h}([x_{p_0}, k_0] \cdot g) & = \widetilde{h}([x_{p_0}, k_0 g]) \\
        & = [x_{p_0}, h_{p_0}(x_{p_0}) k_0 g] \\
        & = [x_{p_0}, h_{p_0}(x_{p_0}) k_0] \cdot g \\
        & = \widetilde{h}([x_{p_0}, k_0]) \cdot g.
    \end{aligned}
\end{equation}
Thus $\widetilde{h}$ is a morphism of diffeological principal $G$-bundles over $X$.

Now given a plot $p_0: U_{p_0} \to X$, we obtain the commutative diagram (\ref{eq map of bundles under trivialization}) by pulling back $\widetilde{h}$ along $p_0$ and the plotwise trivialization $\varphi$. We need only show that $\widetilde{h}_{p_0}(x_{p_0}, k_0) = (x_{p_0}, h_{p_0}(x_{p_0}) \cdot k_0)$. Since $\widetilde{h}_{p_0} = (\varphi'_{p_0})^{-1} \widehat{h}_{p_0} \varphi_{p_0}$ we have
\begin{equation}
\begin{aligned}
        \widetilde{h}_{p_0}(x_{p_0}, k_0) &= (\varphi'_{p_0})^{-1} \widehat{h}_{p_0} \varphi_{p_0}(x_{p_0}, k_0) \\
        &= (\varphi'_{p_0})^{-1} \widehat{h}_{p_0}(x_{p_0}, [x_{p_0}, k_0]) \\
        &= (\varphi'_{p_0})^{-1}(x_{p_0}, [x_{p_0}, h_{p_0}(x_{p_0}) \cdot k_0]) \\
        &= (x_{p_0}, h_{p_0}(x_{p_0}) \cdot k_0),
\end{aligned}
\end{equation}
where we have used the plotwise trivialization from the proof of Proposition \ref{prop construction of bundle} and the convention of Remark \ref{rem plotwise trivialization convention}.
\end{proof}

The content of Propositions \ref{prop construction of bundle} and \ref{prop construction of morphism of bundles} can be summarized by saying that we have a functor $\cat{Cons} : \cat{Coc}(X,G) \to \cat{DiffPrin}_G(X)$. Our goal now is to show that this functor is an equivalence, namely that it is fully faithful and essentially surjective.

Let us first show essential surjectivity. Suppose that $\pi : P \to X$ is a diffeological principal $G$-bundle. Suppose that for every plot $p_0: U_{p_0} \to X$ we choose a trivialization $\varphi_{p_0} : U_{p_0} \times G \to p_0^*P$, which is a $G$-equivariant diffeomorphism. Let $f_0: U_{p_1} \to U_{p_0}$ be a map of plots. Then we obtain the following commutative diagram
\begin{equation} \label{eq bundle under trivializations}
    \begin{tikzcd}
	{U_{p_1} \times G} && {U_{p_0} \times G} \\
	{p_1^*P} && {p_0^*P} \\
	{U_{p_1}} && {U_{p_0}} \\
	& P \\
	& X
	\arrow["{p_1}"', from=3-1, to=5-2]
	\arrow["{p_0}", from=3-3, to=5-2]
	\arrow["{\psi_{p_1}}"', from=2-1, to=4-2]
	\arrow["{\psi_{p_0}}", from=2-3, to=4-2]
	\arrow["{\varphi_{p_1}}"', from=1-1, to=2-1]
	\arrow[from=2-1, to=3-1]
	\arrow["{\widetilde{f}_0}", from=1-1, to=1-3]
	\arrow["{\widehat{f}_0}", from=2-1, to=2-3]
	\arrow["{f_0}", from=3-1, to=3-3]
	\arrow[from=4-2, to=5-2]
	\arrow[from=2-3, to=3-3]
	\arrow["{\varphi_{p_0}}", from=1-3, to=2-3]
\end{tikzcd}
\end{equation}
where $\widehat{f}_0$ is obtained from $f_0$ by pullback (taking pullbacks is functorial), and $\widetilde{f}_0$ is defined as $\widetilde{f}_0 = \varphi_{p_0}^{-1} \widehat{f}_0 \varphi_{p_1}$. Given $(x_{p_1}, k) \in U_{p_1} \times G$, it follows that 
\begin{equation} \label{eq what plot maps do under trivialization}
\widetilde{f}_0(x_{p_1}, k) = (f_0(x_{p_1}), g_{f_0}(x_{p_1}) \cdot k)
\end{equation}
for some map $g_{f_0}: U_{p_1} \to G$. This is because both $\widehat{f}_0$ and $\widetilde{f}_0$ are maps of diffeological principal $G$-bundles, and therefore $G$-equivariant. 

If furthermore we have a pair of composable plot maps $U_{p_2} \xrightarrow{f_1} U_{p_1} \xrightarrow{f_0} U_{p_0}$, then 
\begin{equation}
    (\widetilde{f_0 f_1})(x_{p_2}, k) = ((f_0 f_1)(x_{p_2}), g_{f_0 f_1}(x_{p_2}) \cdot k) = ((f_0 f_1)(x_{p_2}), g_{f_0}(f_1(x_{p_2}) \cdot g_{f_1}(x_{p_2}) \cdot k) = \widetilde{f_0} \widetilde{f_1}(x_{p_2}, k).
\end{equation}
From this we obtain the cocycle condition (\ref{eqn diff cocycle condition}).

Thus given a pair of a diffeological principal $G$-bundle $\pi : P \to X$, and a choice $\varphi$ of plotwise trivializations, we obtain a $G$-cocycle $g$. Denote this cocycle by $g = \cat{Ext}(P, \varphi)$ for extracting the cocycle from the principal bundle. We wish to show that there exists an isomorphism $\tau: \cat{Cons}( \cat{Ext}(P,\varphi)) \to P$ of diffeological principal $G$-bundles.

Let $Q = \cat{Cons}( \cat{Ext}(P,\varphi))$ and let $\widetilde{Q} = \coprod_{p_0 \in \cat{Plot}(X)} U_{p_0} \times G$. Let $q : \widetilde{Q} \to Q$ denote the quotient map. Let us define a set function $\tau: Q \to P$ as follows. Given a point $[x_{p_1}, k_1] \in Q$, let $\tau([x_{p_1}, k_1])$ be $\psi_{p_1} \varphi_{p_1}(x_{p_1}, k_1)$, where $\psi_{p_1}$ and $\varphi_{p_1}$ are the maps given in (\ref{eq bundle under trivializations}). Let us show that this function is well defined. Suppose that $[x_{p_1}, k_1] = [x_{p_0}, k_0]$ are connected by a single plot map $f_0$, i.e. that $f_0(x_{p_1}) = x_{p_0}$ and $k_0 = g_{f_0}(x_{p_1}) \cdot k_1$. Then by $(\ref{eq bundle under trivializations})$ and (\ref{eq what plot maps do under trivialization}) we have
\begin{equation}
\begin{aligned}
    \tau([x_{p_0}, k_0]) & = \psi_{p_0} \varphi_{p_0}(x_{p_0}, k_0) \\
    & = \psi_{p_0} \varphi_{p_0}(f_0(x_{p_1}), g_{f_0}(x_{p_1}) \cdot k_1) \\
    & = \psi_{p_0} \varphi_{p_0} \widetilde{f}_0 (x_{p_1}, k_1) \\
    & = \psi_{p_1} \varphi_{p_1}(x_{p_1}, k_1) \\
    & = \tau([x_{p_1}, k_1]).
\end{aligned}
\end{equation}
Thus if $(x_{p_1}, k_1)$ and $(x_{p_0}, k_0)$ are connected by an arbitrary finite zig-zag, then using the above argument on every morphism in the zig-zag shows that $\tau$ is well defined. To see that it is smooth, since $q$ is a submersion, by Lemma \ref{lem universal property of subduction} it is enough to show that $\tau q = \widetilde{\tau}$ is smooth. But this is just the map $\widetilde{\tau} : \coprod_{p_0} U_{p_0} \times G \to P$ that sends $(x_{p_0}, k_0)$ to $\psi_{p_0} \varphi_{p_0} (x_{p_0})$ and these are smooth maps, thus $\widetilde{\tau}$ is smooth and therefore $\tau$ is smooth. Since $\psi_{p_0}$ and $\varphi_{p_0}$ are $G$-equivariant, it is easy to see that $\tau$ is as well. Thus $\tau$ is a map of diffeological principal $G$-bundles, so by Lemma \ref{lem props of principal bundles}, it is an isomorphism. Therefore the functor $\cat{Cons} : \cat{Coc}(X,G) \to \cat{DiffPrin}_G(X)$ is essentially surjective.

Now let us show that $\cat{Cons}$ is fully faithful. Suppose that $h$ and $h'$ are maps of cocycles $h, h' : g \to g'$, and let $\widetilde{h} = \cat{Cons}(h)$ and $\widetilde{h}' = \cat{Cons}(h')$. Suppose that $\widetilde{h} = \widetilde{h}'$. Then using the canonical plotwise trivialization $\varphi$ of $P = \cat{Cons}(g)$ and $P' = \cat{Cons}(g')$ we obtain two copies of (\ref{eq map of bundles under trivialization}) for $\widetilde{h}$ and $\widetilde{h}'$. Since they are equal as maps, this implies that for every plot $p_0: U_{p_0} \to X$, $\widetilde{h}_{p_0} = \widetilde{h}'_{p_0}$. But by Proposition \ref{prop construction of morphism of bundles}, $\widetilde{h}_{p_0}$ determines $h_{p_0}$ and $\widetilde{h}'_{p_0}$ determines $h'_{p_0}$. Thus $h = h'$. Thus $\cat{Cons}$ is a faithful functor.

Now suppose that $P = \cat{Cons}(g)$ and $P' = \cat{Cons}(g')$ and $\widetilde{h}: P \to P'$ is a map of diffeological principal $G$-bundles. We wish to construct a map $h: g \to g'$ of cocycles such that $\cat{Cons}(h) = \widetilde{h}$. We obtain such a morphism $h$ of cocycles by pulling $\widetilde{h}$ along the canonical plotwise trivialization $\varphi$ of $\cat{Cons}(g)$ as in (\ref{eq map of bundles under trivialization}), so that for every plot $p_0$ of $X$, we have $\widetilde{h}_{p_0}(x_{p_0}, k) = (x_{p_0}, h_{p_0}(x_{p_0}) \cdot k_0)$. One can check that $h$ is a morphism of $G$-cocycles by chasing around the left hand square in the following commutative diagram
\begin{equation}
\begin{tikzcd}
	&& P \\
	& {U_{p_0} \times G} && {P'} \\
	{U_{p_1} \times G} && {U_{p_0} \times G} \\
	& {U_{p_1} \times G}
	\arrow["{\widetilde{f}_0}", from=3-1, to=2-2]
	\arrow["{\psi_{p_0}\varphi_{p_0}}", from=2-2, to=1-3]
	\arrow["{\widetilde{h}}", from=1-3, to=2-4]
	\arrow["{\widetilde{h}_{p_0}}", from=2-2, to=3-3]
	\arrow["{\psi_{p_0}' \varphi_{p_0}'}"', from=3-3, to=2-4]
	\arrow["{\widetilde{h}_{p_1}}"', from=3-1, to=4-2]
	\arrow["{\widetilde{f}'_0}"', from=4-2, to=3-3]
\end{tikzcd}
\end{equation}

Now we wish to show that $\cat{Cons}(h) = \widetilde{h}$. Let $x \in X$, and consider the plot $x : * \to X$ that sends the point to $x$. If we let $p_0 = x$ in (\ref{eq map of bundles under trivialization}), then $U_{p_0} \times G \cong G$. Let $p = \psi_x \varphi_x(e_G)$. Then $\widetilde{h}(p) = \psi'_x \varphi'_x \widetilde{h}_x(e_G) = \psi'_x \varphi'_x (h_x(e_G))$. However $\cat{Cons}(h)(p)$ is also determined plotwise by $h_x$, i.e. $\cat{Cons}(h)(p) = \psi'_x \varphi'_x (h_x(e_G)) = \widetilde{h}{p}$. Since $\widetilde{h}$ and $\cat{Cons}(h)$ are $G$-equivariant, and the action of $G$ on $\pi^{-1}(x)$ is transitive, this implies that $\widetilde{h}$ and $\cat{Cons}(h)$ agree on $\pi^{-1}(x)$. Since $x$ was arbitrary, this implies that $\widetilde{h} = \cat{Cons}(h)$. Thus $\cat{Cons}$ is a full functor. In summary we have proven the following.

\begin{Th} \label{th diff cocycle theorem}
Given a diffeological space $X$ and a diffeological group $G$, the functor 
\begin{equation}
    \cat{Cons}: \cat{Coc}(X,G) \to \cat{DiffPrin}_G(X)
\end{equation}
is an equivalence of groupoids.
\end{Th}

\begin{Rem}
Any choice of plotwise trivialization $\varphi$ with $\varphi_{p_0}: U_{p_0} \times G \cong p_0^* P$ gives a quasi-inverse to the functor $\cat{Cons}$ above.
\end{Rem}

\begin{Rem}
Weaker, but somewhat similar results to Theorem \ref{th diff cocycle theorem} have been proven in \cite{watts2014diffeological} and \cite{ahmadi2023diffeological}. But notice in these papers that the correspondence was only proven on the level of isomorphism classes, with different notions of diffeological \v{C}ech cohomology, and they only establish a bijection of sets. We say more about the other notions of diffeological \v{C}ech cohomology in the literature in Section \ref{section diffeological cech cohomologies}.
\end{Rem}

\section{Diffeological Spaces as Concrete Sheaves} \label{section diffeological spaces as concrete sheaves}

A major development in the theory of diffeological spaces was made in \cite{baez2009convenient}, which showed that the category of diffeological spaces is equivalent to the category of concrete sheaves on the site of open subsets of cartesian spaces. Here we introduce the theory necessary to understand this result, and to prepare the ground for Section \ref{section smooth higher stacks}. In Appendix \ref{section comparison of site structures}, we will show that the category of concrete sheaves on several various sites of interest are equivalent, giving a justification for Definition \ref{def diffeological space}. Nothing in this section is new, but it may be helpful to those less familiar with topos theory.

\begin{Def}
Let $\site$ be a category, and $U \in \site$. A \textbf{family of morphisms} over $U$ is a set of morphisms $r = \{r_i: U_i \to U \}_{i \in I}$ in $\site$ with codomain $U$.

A \textbf{refinement} of a family of morphisms $t = \{ t_j: V_j \to U \}_{j \in J}$ over $U$ consists of a family of morphisms $r = \{ r_i: U_i \to U \}_{i \in I}$, a function $\alpha: I \to J$ and for each $i \in I$ a map $f_i: U_i \to V_{\alpha(i)}$ making the following diagram commute:
\begin{equation}
    \begin{tikzcd}
	{U_i} && {V_{\alpha(i)}} \\
	& U
	\arrow["{f_i}", from=1-1, to=1-3]
	\arrow["{r_i}"', from=1-1, to=2-2]
	\arrow["{t_{\alpha(i)}}", from=1-3, to=2-2]
\end{tikzcd}
\end{equation}
If $r$ is a refinement of $t$, with maps $f_i: U_i \to V_{\alpha(i)}$, then we write $f: r \to t$. 
\end{Def}

We wish to consider added structure to a category that generalizes the notion of a topology. We will use families of morphisms as a generalized notion of "open cover."

\begin{Def}
A \textbf{collection of families} $j$ on a category $\site$ consists of a set $j(U)$ for each $U \in \site$, whose elements $\{r_i: U_i \to U \} \in j(U)$ are families of morphisms over $U$.

We call a collection of families $j$ on $\site$ a \textbf{coverage} if it satisfies the following property: for every $\{ r_i: U_i \to U \} \in j(U)$, and every map $g: V \to U$ in $\site$, then there exists a family $\{ t_j: V_j \to V \} \in j(V)$ such that $g t_j$ factors through some $r_i$. Namely for every $t_j$ there exists some $i$ and some map $s_j: V_j \to U_i$ making the following diagram commute:
\begin{equation} \label{eqn coverage def}
    \begin{tikzcd}
	{V_j} & {U_i} \\
	V & U
	\arrow["{t_j}"', from=1-1, to=2-1]
	\arrow["{s_j}", from=1-1, to=1-2]
	\arrow["{r_i}", from=1-2, to=2-2]
	\arrow["g"', from=2-1, to=2-2]
\end{tikzcd}
\end{equation}
The families $\{ r_i: U_i \to U \} \in j(U)$ are called \textbf{covering families} over $U$. If a map $r_i: U_i \to U$ belongs to a covering family $r \in j(U)$, then we say that $r_i$ is a \textbf{covering map}.

If $\site$ is a category, and $j$ is a coverage on $\site$, then we call the pair $(\site, j)$ a \textbf{site}.
\end{Def}

\begin{Ex} \label{ex j of topological space coverage}
Let $X$ be a topological space and let $\mathcal{O}(X)$ denote the partially ordered set of open subsets of $X$. Let $j_X$ denote the collection of familes on $\mathcal{O}(X)$ such that $j_X(U)$ is the set of all open covers of $U$, namely $\{ U_i \subseteq U \} \in j_X(U)$ if $\bigcup_i U_i = U$.

This collection of families is a coverage, for suppose we have fixed an open cover $\{U_i \subseteq U \}$ and an open subset $V \subseteq U$. Then $\{ V \cap U_i \subseteq V \}$ is an open cover of $V$, and $V \cap U_i \subseteq U_i$. We call $j_X$ the \textbf{open cover coverage} of $X$.
\end{Ex}

\begin{Ex} \label{ex j open coverage}
Define a collection of families $j_\open$ on $\cat{Man}$ as follows: For $M \in \cat{Man}$, let $j_\open(M)$ denote the collection of open covers of $M$. Then $j_\open$ is a coverage. Indeed if $\{U_i \subseteq M \}$ is an open cover and $f: N \to M$ is a smooth map, then $\{ f^{-1}(U_i) \subseteq N \}$ is an open cover of $N$ satisfying (\ref{eqn coverage def}).

Now consider the following full subcategories
$$\cart \hookrightarrow \cat{Open} \hookrightarrow \cat{Man}.$$
Where $\cart$ is the full subcategory whose objects are cartesian spaces and $\cat{Open}$ is the full subcategory whose objects are diffeomorphic to open subsets of a cartesian space. The collection of families $j_\open$ can be restricted to $\Open$ and is clearly a coverage there as well.

Notice however that if we restrict $j_\open$ to $\cart$, and $U$ is a cartesian space, then an open cover $\{U_i \subseteq U \}$ is a covering family for $j_\open$ if and only if it is a cartesian open cover, otherwise it could not be a collection of morphisms in $\cart$. For $\cat{Man}$ and $\cat{Open}$ any open cover will do. However if $\{ U_i \subseteq U \}$ is a cartesian open cover and $f: V \to U$ is a smooth map, there is no reason that $\{ f^{-1}(U_i) \subseteq V \}$ will be a cartesian open cover. However as we will see in Example \ref{ex j good coverage}, every open cover can be refined by a cartesian open cover, and thus $j_\open$ is indeed a coverage on $\cart$.
\end{Ex}

\begin{Ex} \label{ex j good coverage}
Define a collection of families $j_\good$ on $\cat{Man}$ as follows: For $M \in \cat{Man}$, let $j_\good(M)$ denote the collection of good open covers as in Definition \ref{def cart, good covers, parametrizations} of $M$. Let us show that the good covers form a coverage. If $\{U_i \subseteq M \}$ is a good cover and $g: N \to M$ a smooth map, then $\{ g^{-1}(U_i) \subseteq N \}$ is an open cover, but not necessarily good. By \cite[Corollary 5.2]{bott1982differential}, this open cover can be refined by a good open cover $\{W_k \subseteq N \}$ so that for every $W_k$ in the good open cover, there exists a $U_i$ such that $W_k \subseteq g^{-1}(U_i)$, and thus the following diagram commutes:
\begin{equation*}
\begin{tikzcd}
	{W_k} & {g^{-1}(U_i)} & {U_i} \\
	N && M
	\arrow["g"', from=2-1, to=2-3]
	\arrow[hook, from=1-3, to=2-3]
	\arrow[hook, from=1-1, to=2-1]
	\arrow["{g|_{g^{-1}(U_i)}}", from=1-2, to=1-3]
	\arrow[hook, from=1-1, to=1-2]
\end{tikzcd}
\end{equation*}
Thus $j_\good$ is a coverage on $\cat{Man}$. Similarly it defines a coverage on $\cart$ and $\cat{Open}$.
\end{Ex}

\begin{Def} \label{def smooth sites}
Let $\cat{Smooth}$ denote a site of the form $(\site, j)$ with $\site \in \{ \cart, \cat{Open}, \cat{Man} \}$ and $j \in \{ j_\open, j_\good\}$. We will call any such site a \textbf{smooth site}.
\end{Def}

\begin{Ex} \label{ex coverage of subductions on Diff}
We note here that the collection of families $j_\sub$ of subductions on the category $\cat{Diff}$ of diffeological spaces is a coverage, because the pullback of a subduction is a subduction. We will not use this observation in this section, but it will come up in Section \ref{section resolutions of diffeological spaces} when we talk about diffeological categories.
\end{Ex}

Coverages are those collections of families with the least amount of structure with which we can define sheaves on $\site$.

\begin{Def}
A \textbf{presheaf} on a category $\site$ is a functor $F: \site^{op} \to \cat{Set}$. An element $x \in F(U)$ for an object $U \in \site$ is called a \textbf{section} over $U$. If $f: U \to V$ is a map in $\site$, and $x \in F(V)$,  then we sometimes denote $F(f)(x)$ by $x|_U$. If $\{r_i: U_i \to U \}_{i \in I}$ is a covering family, then a \textbf{matching family} is a collection $\{x_i \}_{i \in I}$, $x_i \in F(U_i)$, such that given a diagram in $\site$ of the form
\begin{equation*}
\begin{tikzcd}
	V & {U_j} \\
	{U_i} & U
	\arrow["{r_i}"', from=2-1, to=2-2]
	\arrow["{r_j}", from=1-2, to=2-2]
	\arrow["f"', from=1-1, to=2-1]
	\arrow["g", from=1-1, to=1-2]
\end{tikzcd}
\end{equation*}
then $F(f)(x_i) = F(g)(x_j)$ for all $i,j \in I$. An \textbf{amalgamation} $x$ for a matching family $\{ x_i \}$ is a section $x \in F(U)$ such that $x_i|_U = x$ for all $i$.
\end{Def}

\begin{Def}
Given a family of morphisms $r = \{ r_i: U_i \to U \}$ in a category $\site$, we say that a presheaf $F: \site^{op} \to \cat{Set}$ is a \textbf{sheaf on $r$} if every matching family $\{ s_i \}$ of $F$ over $r$ has a unique amalgamation. If $j$ is a coverage on a category $\site$, we call $F$ a \textbf{sheaf} on $(\site, j)$ if it is a sheaf on every covering family of $j$. Let $\cat{Sh}(\site, j)$ denote the full subcategory of $\cat{Pre}(\site)$ whose objects are sheaves on $(\site, j)$.
\end{Def}

\begin{Rem}
If $(\site, j)$ is a site that has pullbacks, then we can equivalently express the condition for $F$ being a sheaf as requiring that for every $U \in \site$ and every covering family $\{U_i \to U \} \in j(U)$, the diagram:
\begin{equation} \label{eqn sheaf condition}
    \begin{tikzcd}
	{F(U)} & {\prod_i F(U_i)} & {\prod_{i,j} F(U_i \times_U U_j)}
	\arrow[from=1-1, to=1-2]
	\arrow[shift right=1, from=1-2, to=1-3]
	\arrow[shift left=1, from=1-2, to=1-3]
\end{tikzcd}
\end{equation}
is an equalizer. This is how the sheaf condition is often presented in the literature.
\end{Rem}

\begin{Ex}
Given a smooth manifold $M$, the presheaf 
$$U \mapsto C^\infty(U,M)$$
which we denote by either $\u{M}$ or just $M$, is a sheaf on $\cat{Smooth}$. Another important example of a sheaf is
$$U \mapsto \Omega^k(U)$$ for any $k \geq 0$.
If $V$ is a cartesian space, we will denote its image under the Yoneda embedding by $yV$. This is the presheaf
$$U \mapsto C^\infty(U,V)$$
and as above is a sheaf. We call a site $(\site, j)$ where every representable presheaf is a sheaf \textbf{subcanonical}. It is not hard to see that $\cat{Smooth}$ is a subcanonical site.
\end{Ex}

We wish to single out those sheaves that are in some sense a set with extra structure.

\begin{Def}
A site $(\site, j)$ is \textbf{concrete} if:
\begin{enumerate}
    \item it is subcanonical,
    \item it has a terminal object $*$,
    \item the functor $\site(*,-): \site \to \cat{Set}$ is faithful, and
    \item for every covering family $\{U_i \to U \}$, the family of maps $\site(*, U_i) \to \site(*,U)$ is \textbf{jointly surjective}, namely the map $\coprod_i \site(*, U_i) \to \site(*,U)$ is surjective.
\end{enumerate}
\end{Def}

It is not hard to show that all of the smooth sites are concrete.

\begin{Def}
If $(\site, j)$ is a concrete site and $F$ is a presheaf, then we call $F(*)$ its \textbf{underlying set}, and for any $U \in \site$ there always exists a map
$$\phi_U: F(U) \to \cat{Set}(\site(*,U), F(*))$$
defined by $\phi_U(x) = \left( u \mapsto F(u)(x) \right)$. We say a sheaf $F$ is \textbf{concrete} if for every object $U \in \site$, the function $\phi_U$ is injective. Let $\cat{ConSh}(\site, j)$ denote the full subcategory of concrete sheaves on a concrete site $(\site,j)$.
\end{Def}

\begin{Ex}
For any smooth manifold $M$, the sheaf $\u{M}$ on $\cat{Smooth}$ is concrete. This is equivalent to saying that for every $U \in \cat{Smooth}$ the function
$$\phi_U: C^\infty(U,M) \to \cat{Set}(U(*),M(*))$$
is injective, which is the same thing as saying that the set of smooth maps from $U$ to $M$ is a subset of all set functions from $U$ to $M$.

Note that the sheaf $\Omega^k$ is not concrete on $\cat{Smooth}$. Indeed if $U \in \cat{Smooth}$, then $\phi_U$ takes the form
$$\phi_U: \Omega^k(U) \to \cat{Set}(U(*), \Omega^k(*))$$
but $\Omega^k(*) = \{ 0 \}$ is the zero vector space, thus $\cat{Set}(U(*), \{ 0 \}) = *$ is the singleton set. Since in general $\Omega^k(U)$ is nontrivial, this shows that $\Omega^k$ is not concrete.
\end{Ex}

\begin{Th}[{\cite[Proposition 24]{baez2009convenient}}] \label{thm diff' = consh(open)}
The category $\cat{Diff}'$ of classical diffeological spaces is equivalent to the category of concrete sheaves on $\cat{Open}$ with the open cover coverage,
$$\cat{Diff'} \simeq \cat{ConSh}(\cat{Open}, j_\open).$$
\end{Th}

However, the proof of \cite[Proposition 24]{baez2009convenient} can be applied nearly word for word to prove the following.

\begin{Th} \label{thm diff = consh(cart)}
The category $\cat{Diff}$ of diffeological spaces as defined in Definition \ref{def diffeological space} is equivalent to the category of concrete sheaves on $\cart$ with the good open cover coverage
$$\cat{Diff} \simeq \cat{ConSh}(\cart, j_\good).$$
\end{Th}

\begin{Rem}
In Appendix \ref{section comparison of site structures} we will prove $\cat{Diff} \simeq \cat{Diff}'$ by showing that $\cat{ConSh}(\cart, j_\good) \simeq \cat{ConSh}(\cat{Open}, j_\open)$.
\end{Rem}

Theorem \ref{thm diff = consh(cart)} allows us to make a perspective shift. Constructions made in $\cat{Diff}$ can be compared with already defined notions of sheaves. For example a differential $k$-form $\omega$ on a diffeological space $X$ \cite[Article 6.28]{iglesias2013diffeology}, is precisely a map
$$X \xrightarrow{\omega} \Omega^k$$
of sheaves on $\cart$. This viewpoint on diffeological spaces, namely as concrete sheaves on $\cart$, will also be the starting point for Section \ref{section smooth higher stacks}, where we consider the fully faithful embedding of presheaves into simplicial presheaves. Since concrete sheaves are in particular presheaves, this means that there is a fully faithful embedding of diffeological spaces into simplicial presheaves, where we have a powerful homotopy theory to leverage.

\section{Smooth Higher Stacks} \label{section smooth higher stacks}

\subsection{Model Structures on Simplicial Presheaves} \label{section model structures on simplicial presheaves}
For this section, we assume the reader is comfortable with simplicial homotopy theory as in \cite{goerss2012simplicial} and model categories as in \cite{hirschhorn2009model}.

\begin{Def}
Let $\cat{sSet}$ denote the category of simplicial sets, and $\cat{sPre}(\cart)$ the category whose objects are functors $X: \cart^{op} \to \cat{sSet}$ and whose morphisms are natural transformations. We call such functors \textbf{simplicial presheaves}.
\end{Def}

There is a fully faithful embedding $\cat{Set} \hookrightarrow \cat{sSet}$, which we denote by $S \mapsto {}^c S$, where $\left( {}^cS \right)_n = S$ for all $n \geq 0$, and all of the face and degeneracy maps are the identity on $S$. Similarly there is a fully faithful embedding $\pre(\cart) \hookrightarrow \spre(\cart)$, which we also denote by $F \mapsto {}^c F$, such that $\left( {}^cF \right)(U) = {}^c F(U)$ for all $U \in \cart$. We call simplicial presheaves of this form \textbf{discrete simplicial presheaves}. This functor has a left adjoint $\pi_0: \spre(\cart) \to \pre(\cart)$, defined objectwise by
$$ (\pi_0 X)(U) = \text{coeq} \left( \begin{tikzcd}
	{X(U)_0} & {X(U)_1}
	\arrow[shift right=2, from=1-2, to=1-1]
	\arrow[shift left=2, from=1-2, to=1-1]
\end{tikzcd} \right),$$
and a right adjoint $(-)_0: \spre(\cart) \to \pre(\cart)$ defined objectwise by $X_0(U) = X(U)_0$. Note that limits and colimits in $\spre(\cart)$ are computed objectwise.

There is also a functor $(-)_c : \cat{sSet} \to \spre(\cart)$ defined objectwise by $X_c(U) = X$ for every $U \in \cart$. We call simplicial presheaves of this form \textbf{constant simplicial presheaves}. 
The category of simplicial presheaves on $\cart$ is simplicially enriched. Let $X$ and $Y$ be simplicial presheaves, then let $\u{\spre(\cart)}(X,Y)$ denote the simplicially-enriched Hom, defined degreewise by
$$\u{\spre(\cart)}(X,Y)_n = \spre(\cart)(X \times \Delta^n_c, Y).$$

Compare this with the simplicial mapping space for simplicial sets, namely if $X$ and $Y$ are simplicial sets, then let $\u{\cat{sSet}}(X,Y)$ denote the simplicial set defined degreewise by $\u{\cat{sSet}}(X,Y)_n = \cat{sSet}(X \times \Delta^n, Y).$

If $K$ is a simplicial set and $X$ is a simplicial presheaf, then let $X^K$ denote the simplicial presheaf which is defined objectwise by $(X^K)(U) = \u{\cat{sSet}}(K, X(U))$. Then $\spre(\cart)$ is tensored and cotensored over $\cat{sSet}$ in the sense that for simplicial presheaves $X$ and $Y$ and simplicial set $K$, there is the following natural isomorphism
\begin{equation*}
    \u{\spre(\cart)}(X \times K_c, Y) \cong \u{\spre(\cart)}(X, Y^K).
\end{equation*}

The category $\cat{sPre}(\cart)$ admits many model structures. Here we will discuss two of them. Say a map $f: X \to Y$ of simplicial presheaves is an \textbf{objectwise weak equivalence} if $f: X(U) \to Y(U)$ is a weak equivalence of simplicial sets for every $U \in \cart$. Similarly a \textbf{objectwise fibration} is a map $f: X \to Y$ of simplicial presheaves such that $f: X(U) \to Y(U)$ is a Kan fibration of simplicial sets for every $U \in \cart$. 

\begin{Th}[{\cite[Page 314]{bousfield1972homotopy}}]
There is a cofibrantly generated, simplicial model structure, which we call the \textbf{projective model structure} or \textbf{Bousfield-Kan model structure} on $\spre(\cart)$, whose weak equivalences are the objectwise weak equivalences, and whose fibrations are the objectwise fibrations.
\end{Th}

\begin{Rem}
In fact, the projective model structure makes $\spre(\cart)$ a combinatorial model category, see \cite[Section A.2.6]{lurie2009higher}.
\end{Rem}

\begin{Rem}
There is a Quillen equivalent model structure on simplicial presheaves where the cofibrations and weak equivalences are objectwise, which is called the injective or Heller model structure. See \cite{blander2001local} for an overview of the different model structures on simplicial presheaves.
\end{Rem}

As is often the case with model structures, while the descriptions of weak equivalences and fibrations in the projective model structure are convenient, the cofibrations of the projective model structure are less simple to describe. However, the following result gives a sufficient condition for a simplicial presheaf to be cofibrant. 

\begin{Th}[{\cite[Corollary 9.4]{dugger2001universal}}] 
A simplicial presheaf $X$ is cofibrant in the projective model structure on simplicial presheaves if
\begin{enumerate}
    \item $X$ is degreewise a coproduct of representables, i.e. $X_n = \coprod_{i \in I} yU_i$ for every $n \geq 0$,
    \item $X$ is split, in the sense that as a functor $X: \site^{op} \to \cat{sSet}$ it factors through the category $\cat{sSet}_{nd}$ whose objects are simplicial sets and whose morphisms are those maps of simplicial sets that map non-degenerate simplices to non-degenerate simplices.
\end{enumerate}
We say that $X$ is a \textbf{projective cofibrant} simplicial presheaf.
\end{Th}

\begin{Cor}
If $U \in \cart$, then ${}^c yU$ is a projective cofibrant simplicial presheaf on $\cart$.
\end{Cor}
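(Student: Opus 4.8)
The plan is to recognize the map from the initial simplicial presheaf to ${}^c yU$ as one of the generating cofibrations, whence cofibrancy follows immediately. Recall that in any cofibrantly generated model category every member of the generating set $I$ is in particular a cofibration, and that a simplicial presheaf $Z$ is cofibrant exactly when the unique map $\varnothing \to Z$ out of the initial object is a cofibration.

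First I would evaluate the generating family $I_{\spre(\cart)}$ at the degenerate index $n = 0$. Since $\partial \Delta^0 = \varnothing$ is the empty simplicial set, the levelwise formula $(yU \otimes K)(V) = yU(V) \times K$ shows that $yU \otimes \partial \Delta^0$ is the initial simplicial presheaf $\varnothing$. On the other hand $(yU \otimes \Delta^0)(V) = yU(V) \times \Delta^0$ is the discrete simplicial set on $yU(V)$, so $yU \otimes \Delta^0 \cong yU$, where this representable simplicial presheaf is precisely the discrete simplicial presheaf ${}^c yU$ (the embedding $y \colon \cart \to \spre(\cart)$ lands in discrete simplicial presheaves). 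Hence the $n = 0$ member of $I_{\spre(\cart)}$ is exactly the map $\varnothing \to {}^c yU$.

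As this map lies in the generating set $I_{\spre(\cart)}$, it is a cofibration in the local projective model structure, and therefore ${}^c yU$ is cofibrant.

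I do not anticipate a genuine obstacle here: the argument is essentially the observation already recorded in the paragraph preceding the statement. The only points demanding care are the identifications $yU \otimes \partial \Delta^0 \cong \varnothing$ and $yU \otimes \Delta^0 \cong {}^c yU$, both of which are immediate from the levelwise definition of the tensor together with the facts that $\partial \Delta^0$ is empty while $\Delta^0$ is terminal.
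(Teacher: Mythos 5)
Your argument is correct and is exactly the paper's own proof: the paragraph preceding the corollary specializes the generating cofibrations to $n=0$, identifies $yU \otimes \partial\Delta^0 = \varnothing$ and $yU \otimes \Delta^0 \cong yU$, and concludes cofibrancy of ${}^c yU$ from the map $\varnothing \to {}^c yU$ being a generating cofibration. Your write-up just spells out the levelwise identifications in slightly more detail.
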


\begin{Ex}
If $M$ is a finite dimensional smooth manifold, and $\mathcal{U} = \{ U_i \}_{i \in I}$ is a good open cover, then consider the simplicial presheaf $\check{C}(\mathcal{U})$ defined in degree $n$ by
$$\check{C}(\mathcal{U})_n = \prod_{i_0, \dots, i_n} y\left( U_{i_0} \cap \dots \cap U_{i_n} \right),$$
with face and degeneracy maps given by inclusions of open sets. Since $\mathcal{U}$ is a good open cover, $\check{C}(\mathcal{U})$ is a projective cofibrant simplicial presheaf. We call it the \textbf{\v{C}ech nerve} on $\mathcal{U}$. There is a canonical map 
$$\check{C}(\mathcal{U}) \xrightarrow{\pi} {}^c M,$$
of simplicial presheaves on $\cart$. However this map is not an objectwise weak equivalence in general.
\end{Ex}

If $U \in \cart$, and $\mathcal{U} = \{U_i \}_{i \in I}$ is a good open cover of $U$, then we can consider the canonical map
$$\check{C}(\mathcal{U}) \xrightarrow{\pi} {}^c yU.$$
Let $\check{C}$ denote the class of such maps as $U$ varies over $\cart$ and $\mathcal{U}$ varies over good open covers of $U$.

\begin{Def}
Let $\cechpre(\cart)$ denote the left Bousfield localization of the projective model structure on $\spre(\cart)$ at the class of maps $\check{C}$. We call this the \textbf{\v{C}ech model structure} on $\spre(\cart)$.
\end{Def}

The \v{C}ech model structure is described in greater detail in \cite[Appendix A]{dugger2004hypercovers}. Since $\cechpre(\cart)$ is a left Bousfield localization of the projective model structure, it inherits the same cofibrations, and therefore cofibrant objects. We call its weak equivalences the \v{C}ech weak equivalences. Note that any objectwise weak equivalence of simplicial presheaves will be a \v{C}ech weak equivalence.

We call the fibrant objects of this model structure \textbf{$\infty$-stacks} on $\cart$. They can be characterized as follows.

\begin{Prop} \label{prop infinity stack}
A simplicial presheaf $X$ on $\cart$ is an $\infty$-stack on $\cart$ if and only if it is projective fibrant (objectwise a Kan complex), and if for every $U \in \cart$ and good open cover $\mathcal{U}$ of $U$, the map
\begin{equation} \label{eqn cech descent}
    \u{\spre(\cart)}(yU, X) \to \u{\spre(\cart)}(\check{C}(\mathcal{U}), X),
\end{equation}
is a weak equivalence of simplicial sets. We say that $X$ satisfies \textbf{\v{C}ech descent}.
\end{Prop}

\begin{proof}
This follows from the definition of left Bousfield localization.
\end{proof}

By a simplicially-enriched version of the Yoneda Lemma, $\u{\spre(\cart)}(yU,X) \cong X(U)$. We wish to better understand the right hand side of (\ref{eqn cech descent}). To do this we will exploit the following result.

\begin{Lemma}
Let $X$ be a simplicial presheaf. Then
$$ X \cong \int^{[n] \in \cat{\Delta}^{op}} \Delta^n_c \times {}^c X_n,$$
where the colimit is taken in the category of simplicial presheaves, $\Delta^n_c$ is the constant simplicial presheaf on $\Delta^n$ and ${}^c X_n$ is the discrete simplicial presheaf on the presheaf $X_n$.
\end{Lemma}

\begin{proof}
This follows from the corresponding fact for simplicial sets \cite[Lemma I.2.1]{goerss2012simplicial}. 
\end{proof}

Thus we have
$$\check{C}(\mathcal{U}) \cong \int^{[n] \in \cat{\Delta}^{op}} \Delta^n_c \times \coprod_{i_0 \dots i_n} yU_{i_0 \dots i_n}.$$

This implies that
\begin{equation} \label{eqn spre as Tot}
    \begin{aligned}
    \u{\spre(\cart)}(\check{C}(\mathcal{U}), X) & \cong \int_{[n]} \u{\spre(\cart)}(\Delta^n_c \times \coprod_{i_0 \dots i_n} yU_{i_0 \dots i_n}, X) \\
    & \cong \int_{[n]} \prod_{i_0 \dots i_n} \u{\spre(\cart)}\left( yU_{i_0 \dots i_n}, X^{\Delta^n}\right) \\
    & \cong \int_{[n]} \prod_{i_0 \dots i_n} \u{\cat{sSet}} \left( \Delta^n, X(U_{i_0 \dots i_n}) \right) \\
    & \cong \int_{[n]} \u{\cat{sSet}} \left( \Delta^n, \prod_{i_0 \dots i_n} X(U_{i_0 \dots i_n}) \right).
    \end{aligned}
\end{equation}
This kind of limit is special enough to have its own name.

\begin{Def} \label{def totalization}
Let $F$ be a cosimplicial simplicial set, namely a functor $F: \cat{\Delta} \to \cat{sSet}$. Then let $\cat{Tot}(F)$ denote the simplicial set given by the end
$$\cat{Tot}(F) = \int_{[n] \in \cat{\Delta}} \u{\cat{sSet}}(\Delta^n, F^n),$$
where $\u{\cat{sSet}}(X,Y)$ denotes the simplicial mapping space between two simplicial sets $X$ and $Y$, namely $\u{\cat{sSet}}(X,Y)_n = \cat{sSet}(X \times \Delta^n, Y)$. For a cosimplicial simplicial set $F$, $\cat{Tot}(F)$ is often called the \textbf{total object} or \textbf{totalization} of $F$.
\end{Def}

A more convenient way of looking at $\cat{Tot}(F)$ is as the simplicial mapping space $\u{\cat{csSet}}(\Delta, F)$, where $\Delta$ denotes the cosimplicial simplicial set $[m] \mapsto \Delta^m$. In other words, an $n$-simplex of the simplicial set $\cat{Tot}(F)$ is a map of cosimplicial simplicial sets $\Delta \times \Delta^n \to F$. The full data of such a map is a commutative diagram of the form
\begin{equation*}
\begin{tikzcd}
	{\Delta^n} & {\Delta^1 \times \Delta^n} & {\Delta^2 \times \Delta^n} & \dots \\
	{F^0} & {F^1} & {F^2} & \dots
	\arrow[from=1-1, to=2-1]
	\arrow[shift left=2, from=1-1, to=1-2]
	\arrow[shift right=2, from=1-1, to=1-2]
	\arrow[shift left=3, from=1-2, to=1-3]
	\arrow[shift right=3, from=1-2, to=1-3]
	\arrow[from=1-2, to=1-3]
	\arrow[from=1-2, to=2-2]
	\arrow[from=1-3, to=2-3]
	\arrow[shift left=2, from=2-1, to=2-2]
	\arrow[shift right=2, from=2-1, to=2-2]
	\arrow[shift left=3, from=2-2, to=2-3]
	\arrow[shift right=3, from=2-2, to=2-3]
	\arrow[from=2-2, to=2-3]
\end{tikzcd}
\end{equation*}
where each arrow is a map of simplicial sets, and we've hidden the codegeneracy maps for clarity in the diagram.

Thus $X$ is an $\infty$-stack if and only if $X$ is projective fibrant, and the canonical map
\begin{equation}
    X(U) \to \cat{Tot} \left( X(\check{C}(\mathcal{U})) \right)
\end{equation}
is a weak equivalence of simplicial sets, where $X(\check{C}(\mathcal{U}))$ is the cosimplicial simplicial set defined degreewise by $X(\check{C}(\mathcal{U}))_n = \prod_{i_0 \dots i_n} X(U_{i_0 \dots i_n}).$ This concrete description has a pleasing abstract description as well.

\begin{Prop} \label{prop tot gives holim on simplicial presheaves}
If $X$ is a projective fibrant (objectwise Kan) simplicial presheaf, $U \in \cart$ and $\mathcal{U}$ is a good cover of $U$, then
\begin{equation}
\cat{Tot} \left( X(\check{C}(\mathcal{U})) \right) \simeq \text{holim}_{[n] \in \cat{\Delta}} \, X(\check{C}(\mathcal{U}))_n
\end{equation}
where the right hand side is the homotopy limit of the cosimpicial diagram of simplicial sets $X(\check{C}(\mathcal{U}))_n$ taken in the Quillen model structure on simplicial sets.
\end{Prop}

\begin{proof}
By \cite[Theorem 18.7.4]{hirschhorn2009model}, $\cat{Tot} \left( X(\check{C}(\mathcal{U})) \right) \to \text{holim}_n \, X(\check{C}(\mathcal{U}))_n$ is a weak equivalence if $X(\check{C}(\mathcal{U}))$ is a Reedy fibrant cosimplicial simplicial set. By \cite[Lemma C.5]{glass2022chern}, if $X$ is projective fibrant, then $X(\check{C}(\mathcal{U}))$ is Reedy fibrant.
\end{proof}

Thus by Proposition \ref{prop tot gives holim on simplicial presheaves}, if $X$ is a projective fibrant simplicial presheaf, then it is an $\infty$-stack if and only if the canonical map
\begin{equation} \label{eq cech descent using holim}
    X(U) \to \text{holim}_{\cat{\Delta}} \left(\begin{tikzcd}
	{\prod_i X(U_i)} & {\prod_{i,j} X(U_{ij})} & {\prod_{i,j,k}X(U_{ijk})} & {}
	\arrow[shift left=2, from=1-1, to=1-2]
	\arrow[shift right=2, from=1-1, to=1-2]
	\arrow[from=1-2, to=1-1]
	\arrow[shift left=4, from=1-2, to=1-3]
	\arrow[from=1-2, to=1-3]
	\arrow[shift right=4, from=1-2, to=1-3]
	\arrow[shift left=2, from=1-3, to=1-2]
	\arrow[shift right=2, from=1-3, to=1-2]
	\arrow["\dots"{description}, draw=none, from=1-3, to=1-4]
\end{tikzcd}\right)
\end{equation}
is a weak equivalence of simplicial sets.

If $X = {}^c F$ is a presheaf of sets, then $\u{\spre(\cart)}(yU,{}^c F) \cong F(U)$, and 
$$\u{\spre(\cart)}(\check{C}(\mathcal{U}), {}^c F) \cong \pre(\cart)(\pi_0 \check{C}(\mathcal{U}), F) \cong \text{eq} \left( \prod_i F(U_i) \rightrightarrows \prod_{i,j} F(U_{ij}) \right).$$
The right hand side is the usual equalizer one sees in the definition of a sheaf of sets. Note that if $f: {}^c X \to {}^c Y$ is a map of discrete simplicial sets, then $f$ is a weak equivalence if and only if it is an isomorphism of sets. Thus ${}^c F$ is an $\infty$-stack if and only if the canonical map
$$F(U) \to \text{eq} \left( \prod_i F(U_i) \rightrightarrows \prod_{i,j} F(U_{ij}) \right),$$
is an isomorphism of sets for every $U \in \cart$ and good open cover $\mathcal{U}$. In other words, for discrete simplicial presheaves, being an $\infty$-stack is equivalent to being a sheaf.

Suppose that $G: \cart^{op} \to \cat{Gpd}$ is a (strict) presheaf of groupoids, then it is well known that $G$ is a stack, in the classical sense, if and only if the map
$$G(U) \to \text{holim} \left(\begin{tikzcd}
	{\prod_i G(U_i)} & {\prod_{i,j} G(U_{ij})} & {\prod_{i,j,k} G(U_{ijk})}
	\arrow[shift left=2, from=1-1, to=1-2]
	\arrow[shift right=2, from=1-1, to=1-2]
	\arrow[from=1-2, to=1-1]
	\arrow[shift left=4, from=1-2, to=1-3]
	\arrow[from=1-2, to=1-3]
	\arrow[shift right=4, from=1-2, to=1-3]
	\arrow[shift left=2, from=1-3, to=1-2]
	\arrow[shift right=2, from=1-3, to=1-2]
\end{tikzcd} \right),$$
is an equivalence of groupoids, where the right hand side is a homotopy limit of groupoids as described in \cite[Section I.1.7]{carchedi2011categorical}. Now consider the nerve functor $N: \cat{Cat} \to \cat{sSet}$. Then $NG$ is a simplicial presheaf that will be projective fibrant, and it will be an $\infty$-stack if and only if $G$ is a stack in the classical sense. Thus the notion of $\infty$-stack simultaneously generalizes the notion of sheaf and stack, and provides all of the power the homotopy theory of simplicial sets has to offer.

The main example of ($\infty$-)stack we will consider in this paper is the following.

\begin{Ex} \label{ex delooping stack}
Suppose $G$ is a group object in $\cat{Sh}(\cart)$, which we will call a sheaf of groups. Consider the presheaf of groupoids on $\cart$ that sends a $U \in \cart$ to the groupoid
$$\bold{B}G(U) \coloneqq [ G(U) \rightrightarrows * ],$$
where both source and target maps are the unique map to the singleton set. Thus there is a single object in this groupoid, and for every $s \in G(U)$, there is an isomorphism from the unique object to itself. We visualize morphisms in $\B G(U)$ with diagrams like the following
\begin{equation*}
    * \xrightarrow{g} *
\end{equation*}
We define composition in this groupoid using the \textbf{opposite} of multiplication in $G$. In other words we have
\begin{equation*}
    * \xrightarrow{g} * \xrightarrow{h} * \qquad = \qquad  * \xrightarrow{hg} *
\end{equation*}
This convention might seem strange, but we have chosen it to agree with \cite[Section 1.2.5.1]{schreiber2013differential}, which was consulted often in the formulation of this paper. Note that in other conventions \cite{fiorenza2011cech} we must simply swap $G$ with $G^{op}$, the opposite of the presheaf of groupoids, or the presheaf of groupoids with one object and morphisms the sheaf of groups with multiplication defined by $g \cdot^{op} h = hg$. It is important to note that the key equations (\ref{eqn diff cocycle condition}) and (\ref{eqn map of diff cocycles}) of Section \ref{section diff principal bundles} will be altered by changing this convention.

Now if $G$ is in particular a Lie group, then we can consider it as a sheaf of groups on $\cart$. Then $\bold{B}G(U)$ will be the groupoid $[C^\infty(U,G) \rightrightarrows * ]$. Now since all principal $G$-bundles are trivial on Cartesian spaces, it is easy to see that there is an objectwise equivalence of groupoids
$$[C^\infty(U, G) \rightrightarrows *] \simeq \text{Prin}_G(U),$$
where $\text{Prin}_G(U)$ denotes the groupoid of principal $G$-bundles on $U$. Indeed, every object of the right hand groupoid is isomorphic to the trivial bundle $U \times G \to U$, and the automorphisms of a trivial bundle are in bijection with maps $U \to G$. In \cite[Section I.2]{carchedi2011categorical} it is proven that $\text{Prin}_G$ is a stack (in the classical sense) on $\cat{Man}$ and $\cart$. The argument above proves that $\bold{B}G$ is also a stack (in the classical sense) on $\cart$. However $\bold{B}G$ is not a stack on $\cat{Man}$. If we take the nerves of these presheaves of groupoids $N \bold{B} G$ and $N \text{Prin}_G$, then since they are nerves of classical stacks on $\cart$, they will be $\infty$-stacks on $\cart$, and they are objectwise weak equivalent as simplicial presheaves. See \cite[Section 3.2]{fiorenza2011cech} for more details.

We often drop the nerve $N$ from the notation of our $\infty$-stacks, and we call $\bold{B}G$ the \textbf{delooping stack} of $G$.
\end{Ex}

\begin{Rem} \label{rem about pseudofunctors}
Note that in the above example, strictly speaking $\cat{Prin}_G$ is a not a presheaf of groupoids, because if $U \xrightarrow{f} V \xrightarrow{g} W$ is a pair of composable morphisms in $\cart$, then we obtain functors $\cat{Prin}_G(W) \xrightarrow{g^*} \cat{Prin}_G(V) \xrightarrow{f^*} \cat{Prin}_G(U)$ given by pulling back the principal bundles. However if $P \in \cat{Prin}_G(W)$, then $(gf)^* P \neq f^* g^* P$, but there is an isomorphism $(gf)^* P \cong f^* g^* P$. Thus $\cat{Prin}_G$ is called a pseudofunctor $\cat{Prin}_G: \cart^{op} \to \cat{Gpd}$, where $\cat{Gpd}$ is the $2$-category of groupoids. There is an elegant theory \cite[Chapter 10]{johnson20202dimensional} relating pseudofunctors with categories fibered in groupoids, both of which can be used to develop the theory of stacks of groupoids \cite{vistoli2007notes}. However, the homotopy theory of presheaves of groupoids, pseudofunctors, and categories fibered in groupoids are all equivalent in the sense of \cite[Corollary 4.3]{hollander2007homotopy}. Furthermore, the notion of being a $1$-stack is independent across the three models. Thus in what follows we elect to use presheaves of groupoids, as they are the simplest to connect with the theory of simplicial presheaves by applying the nerve functor objectwise.
\end{Rem}

One of the most useful aspects of simplicial model categories is being able to define homotopically invariant mapping spaces.

\begin{Def}
If $X$ and $A$ are simplicial presheaves, then define
\begin{equation}
    \R \Hom(X,A) \coloneqq \u{\spre(\cart)}(QX,RA)
\end{equation}
where $QX$ is a cofibrant replacement for $X$ and $RA$ is a fibrant replacement of $A$ in the \v{C}ech model structure. We call $\R \Hom(X,A)$ the \textbf{derived mapping space} of $X$ and $A$.
\end{Def}

A key property of derived mapping spaces is their invariance under weak equivalence. Indeed suppose there is a \v{C}ech weak equivalence $f: X \to X'$, then the canonical map
$$\R \Hom(f, A) : \R \Hom(X',A) \to \R \Hom(X, A)$$
is a weak equivalence of simplicial sets, similarly for a \v{C}ech weak equivalence $g: A \to A'$, see \cite[Chapter 17]{hirschhorn2009model}.

\begin{Prop}[{\cite[Page 23]{fiorenza2011cech}}] \label{prop isomorphism classes of principal bundles on a manifold}
Given a Lie group $G$, and a finite dimensional smooth manifold $M$, there is a weak equivalence of simplicial sets
\begin{equation}
 N \text{Prin}_G(M) \simeq \R \Hom(M, \bold{B} G),   
\end{equation}
where $N \text{Prin}_G(M)$ denotes nerve of the groupoid of principal $G$-bundles on $M$.
\end{Prop}

Now if $G$ is a diffeological group, and $X$ is a diffeological space, we can consider them both as simplicial presheaves on $\cart$, and compute $\R \Hom(X,\bold{B} G)$. It would be hoped that this would in some way reproduce diffeological principal $G$-bundles, in analogy to Proposition \ref{prop isomorphism classes of principal bundles on a manifold}. One main goal of this paper is to prove that this is indeed the case. But first we must investigate $\R \Hom(X, \bold{B}G)$. If $X$ was cofibrant, and $\bold{B}G$ were fibrant in the \v{C}ech model structure, then $\R \Hom(X, \bold{B} G)$ would be computable. However diffeological spaces are not projective cofibrant in general (though cartesian spaces are). Thus we must find a projective cofibrant simplicial presheaf $QX$ which is \v{C}ech weak equivalent to $X$. This will be the subject of Section \ref{section resolutions of diffeological spaces}.

However, it is indeed the case that $\bold{B}G$ is fibrant, thanks to the following wonderful theorem.

\begin{Th}[{\cite[Lemma 3.3.29]{sati2022equivariant}, \cite[Proposition 4.13]{pavlov2022numerable}}] \label{th delooping sheaf of groups is infinity stack}
Let $G$ be a sheaf of groups on $\cart$. Then $\bold{B}G$ is an $\infty$-stack on $\cart$.
\end{Th}

Thus if $G$ is a diffeological group, then it is in particular a sheaf of groups on $\cart$, and therefore $\bold{B}G$ is an $\infty$-stack.

\subsection{Resolutions of Diffeological Spaces} \label{section resolutions of diffeological spaces}
Here we discuss three ways of ''resolving" a diffeological space into a diffeological category. One of which, $QX$ comes forth immediately from the projective model structure on simplicial presheaves. The other two, which we denote $\check{C}(X)$ and $B//M$, appear in \cite{krepski2021sheaves} and \cite{iglesias2020vcech} respectively, and are interesting in their own right. We compare these three resolutions as diffeological categories, and examine their resulting notions of diffeological \v{C}ech cohomology in Section \ref{section diffeological cech cohomologies}.

Let us start by describing the resolution $QX$ for a diffeological space $X$. Since $\cechpre(\cart)$ is a combinatorial model category, a cofibrant replacement functor $Q$ exists. However, we are in the lucky situation that there is a cofibrant replacement functor $Q$ with a relatively simple form.

\begin{Lemma}[{\cite[Lemma 2.7]{dugger2001universal}}] \label{lem cofibrant replacement of diff space}
Given a diffeological space $X$, thought of as a discrete simplicial presheaf on $\cart$, its cofibrant replacement $QX$ is given by the simplicial presheaf
\begin{equation}
    QX = \int^{[n] \in \cat{\Delta}} \Delta^n_c \times \left( \coprod_{U_{p_n} \to \dots \to U_{p_0} \to X} yU_{p_n} \right)
\end{equation}
\end{Lemma}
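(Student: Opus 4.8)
The plan is to exhibit the displayed coend as the Bousfield--Kan homotopy colimit of the canonical diagram of representables, and then to check that its augmentation to $X$ is a cofibrant replacement. My first move is a reduction. Since the local projective structure is a left Bousfield localization of the (objectwise) projective structure, the two share the same cofibrations, hence the same cofibrant objects, while every objectwise weak equivalence is in particular a local one. Consequently a cofibrant replacement computed in the projective structure automatically serves as one in $\locpre$, and it suffices to produce an \emph{objectwise} cofibrant replacement of $X$.

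Next I would identify the coend. Let $\mathcal{E}_X$ denote the category of elements of $X$: its objects are the plots $p\colon U_p \to X$, and a morphism $p \to p'$ is a smooth map $U_p \to U_{p'}$ commuting with the plots. A chain $U_{p_n}\to\cdots\to U_{p_0}\to X$ is exactly a string of $n$ composable morphisms in $\mathcal{E}_X$, so $\coprod U_{p_n}$ is the degreewise simplicial replacement of the diagram $D\colon \mathcal{E}_X \to \spre(\cart)$, $p \mapsto yU_p$, and the coend with $\Delta^\bullet$ is its geometric realization. Thus
\[
  QX \;=\; \operatorname{hocolim}_{\mathcal{E}_X} D,
\]
with augmentation $QX \to X$ induced by the plots $yU_p \xrightarrow{p} X$. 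The density (co-Yoneda) theorem gives $\colim_{\mathcal{E}_X} yU_p \cong X$, the colimit being formed in presheaves and returning $X$ on the nose.

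Cofibrancy should be routine. Each $yU_p$ is projectively cofibrant, the generating cofibrations $yU\otimes\partial\Delta^n \to yU\otimes\Delta^n$ (in particular $\varnothing \to yU$) being common to both model structures. The simplicial object underlying $QX$ is Reedy cofibrant, since its latching maps are the inclusions of the summands indexed by degenerate chains, and geometric realization carries Reedy cofibrant simplicial objects to cofibrant presheaves; equivalently, the homotopy colimit of an objectwise cofibrant diagram is cofibrant.

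The substantive step, which I expect to be the main obstacle, is showing $QX \to X$ is an objectwise weak equivalence. Evaluation at $V \in \cart$ is a colimit-preserving simplicial functor, so it commutes with the realization and gives $(QX)(V) \cong \operatorname{hocolim}_{\mathcal{E}_X}\cart(V,U_p)$, a homotopy colimit of discrete sets. By Thomason's theorem this is the nerve of the Grothendieck construction on $\cart(V,-)$, whose objects are pairs $(p,\phi\colon V\to U_p)$. I would then check that every morphism preserves the value $p\phi \in X(V)$, so that the fibers of $(p,\phi)\mapsto p\phi$ are unions of connected components, and that each such fiber has $(x,\mathrm{id}_V)$ as an initial object, where $x = p\phi$ is viewed as a plot $V \to X$; indeed any morphism out of $(x,\mathrm{id}_V)$ is forced to equal $\phi$. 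A category with an initial object has contractible nerve, so each fiber is a single contractible component, and $(QX)(V)\simeq X(V)$ via the augmentation. The delicacy is precisely this invariance-and-initiality bookkeeping, together with confirming that the presheaf-level co-Yoneda colimit really recovers the sheaf $X$.
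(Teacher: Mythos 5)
Your proof is correct, and it is essentially the standard one: the paper itself gives no argument for this lemma, citing Dugger's Lemma 2.7, and your proposal reconstructs Dugger's proof faithfully --- the coend is the Bousfield--Kan homotopy colimit of the representables $yU_p$ over the plot category, cofibrancy follows because the simplicial replacement is split (Reedy cofibrant with latching maps the inclusions of degenerate summands), and the augmentation is an objectwise equivalence because each fiber of $(QX)(V) \to X(V)$ is contractible. The only cosmetic difference is that Dugger exhibits the contraction as an extra degeneracy (appending the plot $x = p\phi\colon V \to X$ to a chain), whereas you identify $(QX)(V)$ with the nerve of the Grothendieck construction and locate the initial object $(x,\mathrm{id}_V)$ in each fiber; these are the same homotopy, differently packaged, and your initiality bookkeeping (morphisms preserve $p\phi$, and a map out of $(x,\mathrm{id}_V)$ is forced to be $\phi$) checks out.
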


Let us examine this coend formula in more detail. In degree $k$, we have 
\begin{equation}
(QX)_k  \cong \coprod_{(f_{k-1}, \dots, f_0) : U_{p_k} \to \dots \to U_{p_0}} yU_{p_k}
\end{equation}
where the coproduct is taken over the set $(N\cat{Plot}(X))_k$, namely the set of $k$ composable morphisms in the category of plots over $X$. We will let $N_k \coloneqq (N\cat{Plot}(X))_k$. 

The face maps are given as follows:
\begin{equation}
d_i(x_{p_k}, f_{k-1}, f_{k-2}, \dots, f_0) = 
\begin{cases}
    (f_{k-1}(x_{p_k}), f_{k-2}, \dots, f_0) & i = 0 \\
    (x_{p_k}, f_{k-1}, \dots, f_{k-i-1} f_{k-i}, \dots, f_0) & 0 < i < k \\
    (x_{p_k}, f_{k-1}, \dots, f_1) & i = k.
\end{cases}
\end{equation}
Degeneracies insert identity maps.

For convenience, we will denote the coproduct over all plots as
$$ B \coloneqq (QX)_0 = \coprod_{p_0 \in \cat{Plot}(X)} U_{p_0}.$$
Notice that there is a canonical map
$$B \xrightarrow{\pi} X$$
given by $\pi(p_0,x_{p_0}) = p_0(x_{p_0})$.

This map induces a map $\pi: QX \to {}^c X$ of simplicial presheaves and \cite[Lemma 2.7]{dugger2001universal} proves that this is an objectwise weak equivalence, and thus a \v{C}ech weak equivalence. By construction we also have the following isomorphism of presheaves on $\cart$.

\begin{Lemma}
If $X$ is a diffeological space, then the map $\pi: QX \to {}^c X$ induces an isomorphism of presheaves on $\cart$
\begin{equation} \label{eqn pi0 of QX is X}
    \pi_0 QX \cong X,
\end{equation}
where $\pi_0: \spre(\cart) \to \pre(\cart)$ is defined in Section \ref{section smooth higher stacks}.
\end{Lemma}

\begin{proof}
This follows from the fact that every presheaf is a colimit of representables, see the discussion above \cite[Lemma 2.7]{dugger2001universal}.
\end{proof}

In low degrees, this simplicial presheaf/simplicial diffeological space looks like:
\begin{equation*}\begin{tikzcd}
	{B = \coprod_{p_0 \in \cat{Plot}(X)} U_{p_0}} & {\coprod_{f_0: U_{p_1} \to U_{p_0}} U_{p_1}} & {\coprod_{U_{p_2} \xrightarrow{f_1} U_{p_1} \xrightarrow{f_0} U_{p_0}} U_{p_2}} & \dots
	\arrow[shift left=2, from=1-2, to=1-1]
	\arrow[shift right=2, from=1-2, to=1-1]
	\arrow[from=1-1, to=1-2]
	\arrow[shift left=4, from=1-3, to=1-2]
	\arrow[shift right=4, from=1-3, to=1-2]
	\arrow[from=1-3, to=1-2]
	\arrow[shift right=2, from=1-2, to=1-3]
	\arrow[shift left=2, from=1-2, to=1-3]
\end{tikzcd}
\end{equation*}
where the maps $f_i: U_{p_{i+1}} \to U_{p_i}$ are understood to be morphisms in the plot category $\cat{Plot}(X)$.

In fact, $QX$ is completely determined by $QX_1$ and $QX_0$ in the following sense. Let $N: \cat{DiffCat} \to \cat{sDiff}$ denote the nerve functor from diffeological categories (categories internal to the category of diffeological spaces) to the category of simplicial diffeological spaces, defined degreewise by
$$NC_k = C_1 \times_{t, C_0, s} C_1 \times_{t, C_0, s} \dots \times_{t, C_0, s} C_1,$$
where the iterated pullback is taken $k$-times, where each $C_1 \times_{t, C_0, s } C_1$ denotes the pullback with respect to the target and source maps respectively.

For a diffeological space $X$, the first two spaces and maps between them in $QX$, namely $[QX_1 \rightrightarrows B]$ forms a diffeological category. It also is a presheaf of categories. The source, target and unit maps are defined by the simplicial structure, but we recall their definitions here. The source map $s: QX_1 \to B$ is defined by $s(x_{p_1}, f_0) = x_{p_1}$ and its target map $t: QX_1 \to B$ is defined by $t(x_{p_1}, f_0) = f_0(x_{p_1})$. The unit map $u: B \to QX_1$ is defined by $u(x_p) = (x_p, 1_{U_p})$. The composition map $c: QX_1 \times_B QX_1 \to QX_1$ is defined by $c([x_{p_2}, f_1], [x_{p_1}, f_0]) = (x_{p_2}, f_0 \circ f_1)$. With this structure it is not hard to see that $[QX_1 \rightrightarrows B]$ is a diffeological category/presheaf of categories. In fact $QX$ is completely determined by this diffeological category in the following sense.

\begin{Prop} \label{prop QX is nerve of a diff category}
If $X$ is a diffeological space, then
\begin{equation}
    QX \cong N[QX_1 \rightrightarrows QX_0],
\end{equation}
where we are thinking of $QX$ as a simplicial diffeological space or a simplicial presheaf.
\end{Prop}

\begin{proof}
Let $\varphi: QX_k \to QX_1 \times_B QX_1 \times_B \dots \times_B QX_1$ be the map defined as follows. Suppose that $(x_{p_k}, f_{k-1}, \dots, f_0) \in QX_k$. By induction, define $x_{p_{k-1}} = f_{k-1}(x_{p_k})$ and $x_{p_{k-n}} = f_{k-n}(x_{p_{k - n + 1}})$ for $1 < n \leq k$. Then set
$$\varphi(x_{p_k}, f_{k-1}, \dots, f_0) = ([x_{p_k}, f_{k-1}], [x_{p_{k-1}}, f_{k-2}], \dots, [x_{p_1}, f_0]).$$
This map is smooth, as it is built out of projection maps. Now define $\psi: QX_1 \times_B QX_1 \times_B \dots \times_B QX_1 \to QX_k$ as follows. A point of $QX_1 \times_B QX_1 \times_B \dots \times_B QX_1$ is a collection of pairs $\{ [x_{p_n}, f_{n-1}] \}_{1 \leq n \leq k}$ such that $f_{n-1}(x_{p_n}) = x_{p_{n-1}}$. Thus set
$$\psi([x_{p_k}, f_{k-1}], [x_{p_{k-1}}, f_{k-2}], \dots, [x_{p_1}, f_0]) = (x_{p_k}, f_{k-1}, \dots, f_0).$$
It is not hard to see that this map is smooth, and that $\varphi$ and $\psi$ are two-sided inverses for each other. 
\end{proof}

\begin{Lemma} \label{lem diff space X as quotient of QX}
If $X$ is a diffeological space, then we can consider the coequalizer in $\cat{Diff}$ of $[QX_1 \rightrightarrows QX_0]$ and this is isomorphic to $X$, namely
\begin{equation}
    X \cong \text{coeq} \left( \coprod_{U_{p_1} \xrightarrow{f_0} U_{p_0}} U_{p_1} \rightrightarrows \coprod_{p_0 \in \cat{Plot}(X)} U_{p_0} \right).
\end{equation}
\end{Lemma}

\begin{proof}
This is just a restatement of Lemma \ref{lem diff space is colimit of plots}.
\end{proof}

\begin{Rem} \label{rem coequalizer in diff same as pi0}
Note that the coequalizer given in (\ref{eqn pi0 of QX is X}) is taken in the category $\cat{Pre}(\cart)$, which has different colimits than $\cat{ConSh}(\cart)$. Therefore it does not immediately imply Lemma \ref{lem diff space X as quotient of QX}. However, by combining the two results we have proven that $\pi_0 QX$ is isomorphic to the coequalizer of $QX_1 \rightrightarrows QX_0$ in the category of diffeological spaces.
\end{Rem}

Now as discussed in Section \ref{section model structures on simplicial presheaves}, if $X$ is a diffeological space and $G$ is a diffeological group, then we can consider the simplicial set $\R \Hom(X, \bold{B}G).$
By Theorem \ref{th delooping sheaf of groups is infinity stack}, we know that $\bold{B}G$ is fibrant, thus
\begin{equation*}
    \R \Hom(X, \bold{B}G) = \u{\spre(\cart)}(QX, \bold{B}G) \cong \u{\spre(\cart)}(N[QX_1 \rightrightarrows B], \, N[C^\infty(-,G) \rightrightarrows *]).
\end{equation*}

In Section \ref{section principal diffeological bundles as principal infinity bundles} we will show that this simplicial set is weak equivalent to the nerve of the groupoid of diffeological principal $G$-bundles on $X$.

The fact that $QX$ is the nerve of a diffeological category is interesting, as it allows us to compare it with other diffeological categories using the homotopy theory developed in \cite{roberts2012internal} for categories internal to a site. The site in this instance is the category $\cat{Diff}$ of diffeological spaces with the coverage of subductions, see Example \ref{ex coverage of subductions on Diff}. This homotopy theory provides us with a notion of weak equivalence $f: X \to Y$ of diffeological categories, see \cite[Definition 4.14]{roberts2012internal}, which if both $X$ and $Y$ are diffeological groupoids, coincides with the notion of weak equivalence of diffeological groupoids considered in \cite{watts2022bicategories} and \cite{van2020diffeological}.

If we consider $X$ as a diffeological category $[X = X]$ with all structure maps being the identity, then the canonical map $[QX_1 \rightrightarrows QX_0] \to [X = X]$ of diffeological categories is not a weak equivalence, as it is not fully faithful.

However, there is another diffeological groupoid we can consider. Given a diffeological space $X$, we can consider the canonical map $\pi: B \to X$ as mentioned above. This can be made into a diffeological groupoid $\check{C}(X)$ by setting $\check{C}(X)_0 = B$ and $\check{C}(X)_1 = B \times_X B$, with the source and target maps being the obvious projection maps. We will call this the \textbf{\v{C}ech resolution} of $X$, as a diffeological groupoid. It is not hard then to check that the canonical map $\check{C}(X) \to [X = X]$ of diffeological groupoids is indeed a weak equivalence.

If we then take the nerve of $\check{C}(X)$, we obtain a simplicial diffeological space, which we can also consider as a simplicial presheaf. 

\begin{Prop}
The natural map $\check{C}(X) \to {}^c X$ of simplicial presheaves is a \v{C}ech weak equivalence.
\end{Prop}

\begin{proof}
It is easily checked that the map $\pi: B \to X$ is a local epimorphism, as it is objectwise a surjection. Thus \cite[Corollary A.3]{dugger2004hypercovers} proves that $\check{C}(X) \to {}^c X$ is a weak equivalence in the \v{C}ech model structure on simplicial presheaves.
\end{proof}

Therefore we have a zig-zag of \v{C}ech weak equivalences of simplicial presheaves
$$\check{C}(X) \to {}^c X \leftarrow QX.$$

However $\check{C}(X)$ will not be cofibrant in the projective model structure on simplicial presheaves in general. Thus for our purposes, $QX$ is the preferable resolution of $X$, while for the purposes of those interested in diffeological groupoid theory, $\check{C}(X)$ might be the more preferable resolution.

The final resolution we will discuss is that of the \textbf{gauge monoid} that appears in \cite{iglesias2020vcech}. Given a diffeological space $X$, its nebula $B = \coprod_{p \in \cat{Plot}(X)} U_p$ is a diffeological space, and we can consider the set of smooth maps $f: B \to B$ such that the following diagram commutes:
\begin{equation*}
    \begin{tikzcd}
	B && B \\
	& X
	\arrow["\pi"', from=1-1, to=2-2]
	\arrow["\pi", from=1-3, to=2-2]
	\arrow["f", from=1-1, to=1-3]
\end{tikzcd}
\end{equation*}
It inherits the subspace diffeology from the functional diffeology on $C^\infty(B,B)$. Notice that $M$ acts on $B$ by $B \times M \xrightarrow{\rho} B$, where $\rho(b,m) = m(b)$.

We can therefore consider the diffeological category $B//M \coloneqq [B \times M \rightrightarrows B]$, where the source map $s: B \times M \to M$ is given by $s(b,m) = b$, and the target map $t: B \times M \to M$ is given by $t(b,m) = m(b)$.

There is a map $\delta : QX \to B//M$ defined as the identity on objects and on morphisms by $\delta(x_{p_1}, f_0) = (x_{p_1}, \delta f_0)$
where $\delta f_0$ denotes the map $\delta f_0: B \to B$ that is the identity on every component $U_p$ except for $p = p_1$, in which case $\delta f_0|_{U_{p_1}} = f_0$. It is not hard to check that this defines a map of diffeological categories.

In the reverse direction, there is a map $\text{res}: B//M \to QX$ defined to be the identity on objects and on morphisms by
$$\text{res}(x_p, m) = (x_p, m|_{U_p}).$$
It is not hard to see that the composition $QX \xrightarrow{\delta} B//M \xrightarrow{\text{res}} QX$ is the identity, namely that $QX$ is a retract of $B//M$. 

There is a map $q : B//M \to \check{C}(X)$ described in \cite[Page 26]{krepski2021sheaves} which is the identity on objects and on morphisms is defined by $q(x_p, m) = (x_p, m(x_p))$. This defines a map of diffeological categories. It is also easy to see that $q \delta \text{res} = q$.

To summarize, we have the following diagram of maps of diffeological categories, all of which are the identity on objects, but none of which are fully faithful.
\begin{equation} \label{eqn maps between resolutions}
    \begin{tikzcd}
	QX && {B//M} \\
	& {\check{C}(X)}
	\arrow["\delta"{description}, shift left=2, from=1-1, to=1-3]
	\arrow["{\text{res}}"{description}, shift left=1, from=1-3, to=1-1]
	\arrow["q", shift right=2, from=1-3, to=2-2]
	\arrow["{q \circ \delta}"', from=1-1, to=2-2]
\end{tikzcd}
\end{equation}

The diffeological categories $\check{C}(X)$ and $B//M$ are used to construct \v{C}ech cohomology groups for diffeological spaces in \cite{krepski2021sheaves} and \cite{iglesias2020vcech}.

\subsection{Diffeological \v{C}ech Cohomologies} \label{section diffeological cech cohomologies}

Here we will describe three notions of \v{C}ech cohomology for diffeological spaces that results from the material in Section \ref{section resolutions of diffeological spaces}.

\begin{Rem}
In what follows we will always consider chain complexes and cochain complexes to be non-negatively graded, with differentials going down and up respectively.
\end{Rem}

Let $A$ denote a diffeological abelian group. In \cite{iglesias2020vcech}, \v{C}ech cohomology of a diffeological space $X$ is defined\footnote{Modulo some details, Iglesias-Zemmour defines diffeological spaces with open subsets of cartesian spaces and uses a generating family of open balls, but they are clearly equivalent constructions. He also only restricts to discrete abelian diffeological groups.} as follows. First consider $N (B // M)$, the simplicial diffeological space defined as the nerve of the diffeological category defined in section \ref{section resolutions of diffeological spaces}. Then 
$$A^{N(B//M)_k} = A^{B \times M^{\times k}} = C^\infty(B \times M^{\times k}, A)$$
is precisely the diffeological space of smooth maps $B \times M^{\times k} \to A$. If we forget the smooth structure, then $C^\infty(N(B//M)_k, A)$ is an abelian group by pointwise addition. Thus we obtain a cosimplicial abelian group \begin{equation*}
    \begin{tikzcd}
	{A^B} & {A^{B \times M}} & {A^{B \times M \times M}} & \dots
	\arrow[shift left=2, from=1-1, to=1-2]
	\arrow[shift right=2, from=1-1, to=1-2]
	\arrow[shift left=3, from=1-2, to=1-3]
	\arrow[shift right=3, from=1-2, to=1-3]
	\arrow[from=1-2, to=1-3]
\end{tikzcd}
\end{equation*}
and from this one can obtain a cochain complex as follows. 

If $K$ is a cosimplicial abelian group, then we can define a cochain complex $C^\co K$ called the \textbf{associated cochain complex} by
$$(C^\co K)^n = K^n, \qquad d: (C^\co K)^n \to (C^\co K)^{n+1}, \qquad d = \sum_{i = 0}^n (-1)^i d^i.$$
This definition extends to a functor $C^\co : \cat{cAb} \to \cat{CoCh}$, where $\cat{cAb}$ denotes the category of cosimplicial abelian groups and $\cat{CoCh}$ is the category of cochain complexes. Further there is a functorial direct sum decomposition as cochain complexes $C^\co K \cong N^\co K \oplus D^\co K$, where $D^\co K$ is the subcomplex consisting of degenerate simplices, and the inclusion $N^\co K \to C^\co K$ is a cochain homotopy equivalence of cochain complexes. We call $N^\co K \cong C^\co K / D^\co K$ the \textbf{normalized cochain complex} of $K$.  This is a dual version of what is called the Dold-Kan correspondence, which is an adjoint equivalence
$$N : \cat{sAb} \rightleftarrows \cat{Ch} : \Gamma,$$
where $N$ is the normalized chain complex functor and if $V$ is a chain complex, then $\Gamma V$ is defined degreewise by
$$\Gamma(V)_n = \bigoplus_{[n] \twoheadrightarrow [k]} V_k,$$
where the index is over all surjections $\varphi: [n] \to [k]$ in $\cat{\Delta}$. See \cite[Section 8.4]{weibel1995introduction} and \cite[Section III]{goerss2012simplicial} for details.

The Iglesias-Zemmour \v{C}ech cohomology of $X$ is then defined as the cohomology of this cochain complex: 
$$\check{H}^k_{PIZ}(X,A) = \check{H}^k \left( N^\co \left[ A^{B //M} \right] \right) \cong \check{H}^k \left( C^\co \left[ A^{B//M} \right] \right).$$ 
Similarly, let $N \check{C}(X)$ denote the nerve of the \v{C}ech groupoid defined in Section \ref{section resolutions of diffeological spaces}. If $A$ is an abelian diffeological group, then as above we can map $N \check{C}(X)$ into $A$ to form a cosimplicial abelian group, and taking the cohomology of the associated cochain complex gives us the Krepski-Watts-Wolbert \v{C}ech cohomology \cite{krepski2021sheaves} of $X$ with values in $A$:
$$\check{H}^k_{KWW}(X,A) = \check{H}^k \left( N^\co \left[ A^{\check{C}(X)} \right] \right) \cong \check{H}^k \left( C^\co \left[ A^{\check{C}(X)} \right] \right).$$

We will now construct a cochain complex using the cofibrant replacement $QX$ of a diffeological space $X$. Given an abelian group $A$ and a non-negative integer $n$, there exists a simplicial set $K(A,n)$, called the \textbf{$n$th Eilenberg-Maclane space}, which has trivial homotopy groups in all degrees except for $n$, which has $\pi_n(K(A,n)) = A$. One can construct this simplicial set using the Dold-Kan correspondence. Namely consider the chain complex $A[k]$, defined by
$$ \left( A[k] \right)_n = \begin{cases} A & \text{if } n = k \\
0 & \text{if } n \neq k
\end{cases}, \qquad d = 0.$$
Since $\Gamma(A[k])$ is a simplicial group, it will be a Kan complex, equipped with basepoint $*$ such that $$\pi_n(\Gamma(A[k]),*) = \begin{cases}
    A & \text{if } n = k \\
    0 & \text{if } n \neq k.
\end{cases}$$

\begin{Rem}
For future reference, if $V$ is a chain complex, then let $V[k]$ denote the chain complex such that $V[k]_n = V_{n-k}$, so that we identify an abelian group A with the chain complex $A[0]$, and then $(A[0])[k] = A[k]$.
\end{Rem}

Now we will define $\infty$-stack cohomology for simplicial presheaves. This theory, which we call $\infty$-stack cohomology, is very well developed, and generalizes many examples of cohomology found throughout mathematics, see \cite{schreiber2013differential}, \cite{lurie2009higher}, \cite{bunke2013differential}.

\begin{Def}
Let $X$ be a projective cofibrant simplicial presheaf and $A$ an $\infty$-stack. Then the zeroth \textbf{$\infty$-stack cohomology} of $X$ with values in $A$ is given by
\begin{equation} \label{eq zero infinity stack cohomology}
    \check{H}^0_\infty(X,A) \coloneqq \pi_0 \R \Hom(X,A) \cong \pi_0 \u{\spre(\cart)}(X,A).
\end{equation}
\end{Def}

Note that in the above definition, $A$ is an arbitrary $\infty$-stack. Thus $\check{H}^0_\infty(X,A)$ is an example of nonabelian cohomology. However, in order to define $\check{H}^1_\infty$, we must ask for more structure on $A$, namely that it be an $\infty$-stack, and that $A$ also be a group object in $\spre(\cart)$, namely that $A(U)$ be a simplicial group for each $U \in \cart$ and given a smooth map $f: U \to V$, the map $A(f): A(V) \to A(U)$ is a map of simplicial groups. We call group objects of $\spre(\cart)$ presheaves of simplicial groups.
\begin{Def}
Given a simplicial group $G$, let $\conj{W}G$ denote the simplicial set with 
\begin{equation} \label{eq Wbar}
\begin{aligned}
    \overline{W} G_0 & = * \\
    \overline{W} G_n & = G_{n-1} \times G_{n-2} \times \dots \times G_0 \\
    \end{aligned}
    \end{equation}
    
    with face and degeneracy maps given by
\begin{equation*}
    \begin{aligned}
d_i(g_{n-1}, \dots, g_0) & = 
    \begin{cases}
    (g_{n-2}, \dots, g_0), & \text{ if } i = 0 \\
    (d_{i-1}(g_{n-1}), \dots,  d_1(g_{n-i+1}), g_{n-i-1} \cdot d_0(g_{n-i}), g_{n-i-2},\dots, g_0)), & \text{ if } 1 \leq i \leq n
    \end{cases} \\
    s_i(g_{n-1}, \dots, g_0) &= \begin{cases}
    (1, g_{n-1}, \dots, g_0), & \text{ if } i = 0 \\
    (s_{i-1}(g_{n-1}), \dots, s_0(g_{n-i}), 1, g_{n-i-1}, \dots, g_0), & \text{ if } 1 \leq i \leq n.
    \end{cases}
\end{aligned}
\end{equation*}
\end{Def}

Simplicial sets of the form $\conj{W}G$ classify what are called principal twisted cartesian products or PTCPs in \cite{may1992simplicial}. The combinatorial structure of $\conj{W}G$ may look complicated, but it has other equivalent descriptions that are more motivated, see \cite[Chapter V]{goerss2012simplicial} and \cite{stevenson2012decalage}.

\begin{Lemma}[{\cite[Corollary 6.8]{goerss2012simplicial}}] \label{lem delooping of simplicial group is a kan complex}
If $G$ is a simplicial group, then $\conj{W}G$ is a Kan complex.
\end{Lemma}

If ${}^c G$ a discrete simplicial group, i.e. a group, then $\conj{W} {}^c G \cong N[G \rightrightarrows *]$, the nerve of $G$ thought of as a groupoid with one object. Thus $|\conj{W}{}^c G|$, the geometric realization of the delooping, is weak homotopy equivalent to the classifying space $BG$. 

Now if $A$ is a presheaf of simplicial groups, then we can apply $\conj{W}$ objectwise, and we obtain a functor $\conj{W}: \cat{sPre}(\cart, \cat{sGrp}) \to \cat{sPre}(\cart),$ where $\cat{sPre}(\cart, \cat{sGrp})$ denotes the full subcategory of presheaves of simplicial groups. Further, by Lemma \ref{lem delooping of simplicial group is a kan complex}, $\conj{W}A$ is projective fibrant, i.e. objectwise a Kan complex. 

\begin{Lemma}
Let $G$ be a sheaf of groups on $\cart$. Then the delooping stack of Example \ref{ex delooping stack} is isomorphic to its delooping as a presheaf of simplicial groups
$$\bold{B}G \cong \conj{W}{}^c G.$$
\end{Lemma}

So suppose that $A$ is an $\infty$-stack on $\cart$, and further, that it is a presheaf of simplicial groups. Then we get a new simplicial presheaf $\conj{W}A$, and it is projective fibrant. We therefore define the first $\infty$-stack cohomology group of a simplicial presheaf $X$ with values in $A$ to be
$$\check{H}^1_\infty(X,A) \cong \pi_0 \R \Hom(X, \conj{W} A).$$

In order to be able to compute this, it would be convenient to know that $\conj{W} A$ is fibrant in the \v{C}ech model structure, i.e. is an $\infty$-stack. This follows thanks to the following wonderful theorem.

\begin{Th}[{\cite[Proposition 3.3.30]{sati2022equivariant}, \cite[Proposition 4.13]{pavlov2022numerable}}] \label{th delooping infinity stack is infinity stack}
If $A$ is an $\infty$-stack on $\cart$ that is also a presheaf of simplicial groups, then $\conj{W} A$ is an $\infty$-stack on $\cart$.
\end{Th}

Thus if $A$ is an $\infty$-stack, then for any simplicial presheaf $X$, $\check{H}^0_\infty(X,A)$ is well defined, and if $A$ is also a presheaf of simplicial groups, then $\check{H}^1_\infty(X,A)$ is also well defined. To obtain higher cohomology groups, we must ask for higher deloopings of $A$ to exist. 

\begin{Def}
Let $A$ be an $\infty$-stack that is also a presheaf of simplicial groups. If $\conj{W}^k A$ is a presheaf of simplicial groups for all $1 \leq k \leq n - 1$, and $X$ is a simplicial presheaf, then let
\begin{equation}
    \check{H}^n_\infty(X,A) = \pi_0 \R \Hom(X,\conj{W}^n A)
\end{equation}
denote the $n$th $\infty$-stack cohomology of $X$ with values in $A$.
\end{Def}

It is thus important to know under what conditions will these higher deloopings $\conj{W}^n A$ exist.

\begin{Lemma}
If $A$ is a simplicial abelian group, namely $A$ is a simplicial group and $A_k$ is an abelian group for all $k$, then $\conj{W}A$ will be a simplicial group, and further it will be an abelian simplicial group.
\end{Lemma}

\begin{proof}
It follows from the isomorphism $\conj{W}A \cong TNA$ of \cite[Lemma 5.2]{stevenson2012decalage} and the discussion of $T$ in \cite[Section III]{artin1966van} that $\conj{W}A$ is a simplicial group, and that it is abelian is clear from the formula (\ref{eq Wbar}).
\end{proof}

Thus if $A$ is a simplicial abelian group, $\conj{W}^k A$ exists for all $k$.

\begin{Lemma}[{\cite[Section 4.6]{jardine1997generalized}}] \label{lem dold kan delooping}
Let $A$ be a simplicial abelian group. Then there is an isomorphism of chain complexes
$$N \conj{W}A \cong (NA)[1]$$
where $(N A)[1]$ is the chain complex $NA$ shifted up by $1$, i.e. $(NA[1])_k = (NA)_{k-1}$.
\end{Lemma}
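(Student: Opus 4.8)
The plan is to verify the isomorphism by hand at the level of normalized (Moore) chains, using the explicit formulas for the faces and degeneracies of $\conj{W}A$ recorded above. I will fix the convention (as in \cite[Section III.2]{goerss2012simplicial}) that the normalized complex of a simplicial abelian group $X$ has $NX_n = \bigcap_{i=0}^{n-1}\ker(d_i\colon X_n \to X_{n-1})$ with differential $\partial = (-1)^n d_n$. Since the preceding lemma guarantees that $\conj{W}A$ is again a simplicial abelian group, $N\conj{W}A$ is defined, and the whole problem reduces to identifying the subgroup $N\conj{W}A_n \subseteq \conj{W}A_n = A_{n-1}\times \cdots \times A_0$ together with its differential.

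First I would compute $\ker(d_0)$. Because $d_0(g_{n-1},\dots,g_0) = (g_{n-2},\dots,g_0)$ simply deletes the top entry, an element lies in $\ker(d_0)$ exactly when $g_{n-2} = \cdots = g_0 = 0$, i.e. it has the form $x = (g_{n-1},0,\dots,0)$. Next, for such an $x$ and for $1 \le i \le n-1$, the face formula collapses: the first block contributes only its leading term and the middle/last blocks involve faces of zero entries, so in every case $d_i x = (d_{i-1}(g_{n-1}),0,\dots,0)$. Hence $x \in \bigcap_{i=0}^{n-1}\ker(d_i)$ if and only if $d_j(g_{n-1}) = 0$ for $0 \le j \le n-2$, that is, if and only if $g_{n-1}\in NA_{n-1}$. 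This yields a degreewise isomorphism of abelian groups
\[
\theta_n\colon N\conj{W}A_n \xrightarrow{\ \cong\ } NA_{n-1} = (NA[1])_n, \qquad (g_{n-1},0,\dots,0)\mapsto g_{n-1}.
\]

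It then remains to check that $\theta$ is a chain map. Evaluating the top face on $x = (g_{n-1},0,\dots,0)$ gives $d_n x = (d_{n-1}(g_{n-1}),0,\dots,0)$, so $\theta_{n-1}(\partial^{N\conj{W}A}x) = (-1)^n d_{n-1}(g_{n-1})$, which matches the differential $(-1)^n d_{n-1}$ of $(NA)[1]$ in degree $n$ under the standard shift convention $\partial^{C[1]} = -\partial^C$; if one prefers the unsigned shift, a degreewise sign twist of $\theta$ restores compatibility, and either way $\theta$ is an isomorphism of chain complexes. The only genuinely delicate point is the bookkeeping: one must keep the two sign conventions (for $N$ and for the shift) straight and confirm that the face formulas really do collapse on $\ker(d_0)$. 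Once the computation is restricted to elements supported in the top coordinate these simplifications are immediate, so the main obstacle is purely notational rather than conceptual. This argument is a hands-on rendering of the identification in \cite[Section 4.6]{jardine1997generalized}.
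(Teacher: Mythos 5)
Your computation is correct, and it is worth noting that the paper itself offers no proof of this lemma at all: it is stated as a citation to \cite[Section 4.6]{jardine1997generalized}, so your argument is a self-contained verification of exactly the fact being delegated to the literature there (where $\conj{W}$ is identified under Dold--Kan with the shift/suspension of chain complexes). Checking your steps: the identification $\ker(d_0) = \{(g_{n-1},0,\dots,0)\}$ is immediate from the face formulas, and for $1 \le i \le n-1$ the middle term $g_{n-i-1}\cdot d_0(g_{n-i})$ and the trailing entries all vanish on such elements (including the boundary case $i=1$, where the middle term is $g_{n-2}\cdot d_0(g_{n-1}) = d_0(g_{n-1})$ since $g_{n-2}=0$, consistent with your formula $d_ix = (d_{i-1}(g_{n-1}),0,\dots,0)$), so $N\conj{W}A_n \cong NA_{n-1}$ degreewise as you claim. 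One small interpretive point you glide over: for $i=n$ the face formula as printed in the paper would formally produce a term $g_{-1}\cdot d_0(g_0)$, and the correct reading is that $d_n$ discards $g_0$, giving $d_n x = (d_{n-1}(g_{n-1}),0,\dots,0)$ on your elements; this is what your chain-map check uses, and it is the standard convention, so no harm done. Your handling of the signs is also right: with the convention $\partial^{C[1]} = -\partial^{C}$ the map $\theta$ is a chain map on the nose, since $(-1)^n d_{n-1} = -(-1)^{n-1}d_{n-1}$, and otherwise a degreewise sign twist fixes it. In short, your proof is valid and supplies the explicit content that the paper outsources; the only thing the citation buys that your argument does not is embedding the statement in Jardine's broader identification of $\conj{W}$ with delooping, which the paper then iterates in Lemma \ref{lem delooping abelian groups}.
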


\begin{Lemma}\label{lem delooping abelian groups}
If $A$ is an abelian group, thought of as a discrete simplicial abelian group ${}^c A$, then $\conj{W}^k A$ exists for every $k \geq 0$, and there exists an isomorphism
$$\conj{W}^k {}^cA \cong \Gamma(A[k])$$
\end{Lemma}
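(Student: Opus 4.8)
The plan is to proceed by induction on $k$, transporting the iterated delooping into a shift of chain complexes via the dual Dold--Kan equivalence $N : \cat{sAb} \rightleftarrows \cat{Ch} : \Gamma$ together with Lemma \ref{lem dold kan delooping}. Before the induction I would record two preliminary facts. First, the existence claim is immediate: since ${}^c A$ is a simplicial abelian group, the preceding lemma (that $\conj{W}$ sends simplicial abelian groups to simplicial abelian groups) shows by induction that $\conj{W}^k {}^c A$ is a simplicial abelian group for every $k$, so the iteration $\conj{W}^{k+1}{}^c A = \conj{W}(\conj{W}^k {}^c A)$ is always defined. Second is the base computation of the normalized complex of the discrete simplicial abelian group: every $({}^c A)_n = A$ and every face map is the identity, so $N({}^c A)_0 = A$ while $N({}^c A)_n = \bigcap_{i=0}^{n-1}\ker(\mathrm{id}) = 0$ for $n \geq 1$; hence $N({}^c A) \cong A[0]$.

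For the base case $k = 0$, since $N$ and $\Gamma$ are mutually inverse, we get ${}^c A \cong \Gamma N({}^c A) \cong \Gamma(A[0])$, as required. For the inductive step, suppose $\conj{W}^k {}^c A \cong \Gamma(A[k])$. Applying $N$ and using $N\Gamma \cong \mathrm{id}$ gives $N(\conj{W}^k {}^c A) \cong A[k]$. Now Lemma \ref{lem dold kan delooping}, applied to the simplicial abelian group $\conj{W}^k {}^c A$, yields
$$N(\conj{W}^{k+1}{}^c A) = N\!\left(\conj{W}(\conj{W}^k {}^c A)\right) \cong \left(N(\conj{W}^k {}^c A)\right)[1] \cong (A[k])[1] = A[k+1].$$
Applying $\Gamma$ and using $\Gamma N \cong \mathrm{id}$ then gives $\conj{W}^{k+1}{}^c A \cong \Gamma N(\conj{W}^{k+1}{}^c A) \cong \Gamma(A[k+1])$, completing the induction.

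The only points needing care, and where whatever modest content there is resides, are the shift bookkeeping and the compatibility of conventions: that the degree-raising shift $[1]$ appearing in Lemma \ref{lem dold kan delooping} is exactly the one in the statement, so that $(A[k])[1] = A[k+1]$, and that the successive isomorphisms can legitimately be composed, which follows from the naturality of the Dold--Kan equivalence and of the isomorphism in Lemma \ref{lem dold kan delooping}. I do not expect a genuine obstacle here, since all the homotopical work has been isolated into Lemma \ref{lem dold kan delooping} and the dual Dold--Kan correspondence; the argument is essentially a clean induction driven by those two inputs.
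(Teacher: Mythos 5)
Your proof is correct and follows essentially the same route as the paper: an induction on $k$ driven by Lemma \ref{lem dold kan delooping} together with the adjoint equivalence $N \rightleftarrows \Gamma$, transferring the delooping to the shift $[1]$ on chain complexes. The only differences are cosmetic --- you make explicit the existence claim (via the lemma that $\conj{W}$ preserves simplicial abelian groups) and the computation $N({}^c A) \cong A[0]$, which the paper leaves as "easily checked."
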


\begin{proof}
We proceed by induction. For the base case, we have
$$N \conj{W} {}^c A \cong (N{}^c A)([1]).$$
But $N{}^c A \cong A[0]$ as is easily checked, and $(A[0])[1] = A[1]$, so 
$ N \conj{W} {}^c A \cong A[1]$, thus 
$$ \Gamma N \conj{W} {}^c A \cong \conj{W}{}^c A \cong \Gamma A[1].$$

Now suppose $\conj{W}^{k-1}{}^c A \cong \Gamma A[k-1]$. Then by Lemma \ref{lem dold kan delooping}
$$N \conj{W} \left( \conj{W}^{k-1} {}^c A \right) \cong \left( N \Gamma A[k-1] \right) [1]$$
but $N \Gamma A[k-1] \cong A[k-1]$ since $N$ and $\Gamma$ form an adjoint equivalence, thus:
$$N \conj{W} \left( \conj{W}^{k-1} {}^c A \right) \cong N \conj{W}^k {}^c A \cong A[k-1]([1]) = A[k]$$
taking the adjoint gives
$$\conj{W}^k {}^c A \cong \Gamma A[k].$$
\end{proof}

Now if $A$ is an abelian diffeological group, then it is an $\infty$-stack on $\cart$, since it is a sheaf on $\cart$, and therefore a discrete presheaf of simplicial groups. Thus by \ref{lem delooping abelian groups} and \ref{th delooping infinity stack is infinity stack}, $\conj{W}^n A$ exists, and the $n$th $\infty$-stack cohomology of a diffeological space $X$ with values in $A$ is given by
\begin{equation} \label{eqn higher infinity sheaf cohomology}
    \check{H}^n_\infty(X,A) = \pi_0 \u{\spre(\cart)}(QX,\conj{W}^n A) \cong \pi_0\cat{Tot}( \conj{W}^n [A (QX)]),
\end{equation}
where $\cat{Tot}$ is the totalization of Definition \ref{def totalization}, $A(QX)$ is the cosimplicial abelian group which in degree $k$ is given by $C^\infty(QX_k, A)$, and $\conj{W}^n[A(QX)]$ is $\conj{W}^n$ applied to $A(QX)$ degreewise. Now $\conj{W}^n [A(QX)] \cong \Gamma [A(QX)[n]]$ by Lemma \ref{lem delooping abelian groups}.

\begin{Prop}[{\cite[Lemma 19]{jardine2019cosimplicial}}] \label{prop simplifying Tot for cosimplicial abelian groups}
If $A$ is a cosimplicial abelian group, then 
\begin{equation}
    \pi_0 \cat{Tot}(\Gamma A[k]) \cong \check{H}^k(N^{\co} A)
\end{equation}
where $\Gamma A[k]$ denotes the cosimplicial simplicial abelian group obtained by considering the abelian group $A^i$ as a simplicial abelian group $\Gamma (A^i[k])$.
\end{Prop}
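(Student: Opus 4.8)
The plan is to reduce the statement to a purely (co)chain-level computation, exploiting that each level $\Gamma(A^i[k])$ is an Eilenberg--MacLane object. First I would compute the homotopy of the levels. Since $N$ and $\Gamma$ form an adjoint equivalence and $N\Gamma(A^i[k]) = A^i[k]$ is concentrated in homological degree $k$ with value $A^i$, we have $\pi_t\Gamma(A^i[k]) = A^i$ for $t = k$ and $0$ otherwise; thus $\Gamma(A^i[k])$ is a model for $K(A^i,k)$. Letting $i$ range over $\Delta$, the cosimplicial abelian group $[i] \mapsto \pi_t\Gamma(A^i[k])$ equals $A$ when $t = k$ and vanishes otherwise.

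I would then invoke the Bousfield--Kan homotopy spectral sequence of the cosimplicial space $\Gamma A[k]$,
$$E_2^{s,t} = \pi^s\!\left(\pi_t\,\Gamma A[k]\right) \;\Longrightarrow\; \pi_{t-s}\,\cat{Tot}(\Gamma A[k]),$$
whose input $\pi^s$ is the $s$-th cohomology of the normalized cochain complex of the cosimplicial abelian group $\pi_t\,\Gamma A[k]$ (the cohomotopy of a cosimplicial abelian group, computed via the dual Dold--Kan correspondence). By the previous step the $E_2$-page is concentrated on the single row $t = k$, where it is $H^s(N^{\co}A)$. A page supported on one row admits no nonzero differentials, since every $d_r$ with $r \ge 2$ strictly changes $t$; hence the sequence degenerates at $E_2$ and $\pi_n\,\cat{Tot}(\Gamma A[k]) \cong H^{k-n}(N^{\co}A)$. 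Taking $n = 0$ gives the desired isomorphism.

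The delicate point — and the main obstacle — is justifying the spectral sequence for the non-connective objects $K(A^i,k)$, together with the identification of its $E_2$-input with $H^s(N^{\co}A)$ and its convergence. In the present abelian setting this can be avoided altogether, and I would take the following as the main line. Applying $N$ in the simplicial direction turns $\Gamma A[k]$ into a cosimplicial chain complex concentrated in homological degree $k$, where it is the cosimplicial abelian group $A$; applying $N^{\co}$ in the cosimplicial direction then yields a double complex whose only nonzero row is $N^{\co}A$, placed in homological degree $k$. The totalization of a cosimplicial simplicial abelian group is modeled by the total complex of this double complex, with the homotopy degree reading off as $t - s$; since there is a single nonzero row, the total complex is just $N^{\co}A$ reindexed, so $\pi_n\,\cat{Tot}(\Gamma A[k]) = H^{k-n}(N^{\co}A)$ with no convergence issues, and $n = 0$ yields $\pi_0\,\cat{Tot}(\Gamma A[k]) \cong H^k(N^{\co}A)$.
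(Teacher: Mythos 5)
Your proposal is correct, and it is worth noting that the paper does not actually prove this proposition at all: it is quoted verbatim from Jardine's notes (\cite[Lemma 19]{jardine2019cosimplicial}), so you have supplied an argument where the paper supplies only a citation. Your second, purely abelian argument is the right main line and is essentially the standard proof (and in the same spirit as Jardine's): since $N\colon \cat{sAb} \to \cat{Ch}$ is an equivalence of categories it preserves the end defining $\cat{Tot}(\Gamma A[k]) = \int_{[n]} \left( \Gamma(A^n[k]) \right)^{\Delta^n}$, and the cotensor of a simplicial abelian group by $\Delta^n$ normalizes to a (truncated) hom-complex, so $N\cat{Tot}$ is identified with the (good truncation of the) product total complex of the double complex $N^{\co} N (\Gamma A[k])$; with the single nonzero row $N^{\co}A$ in simplicial degree $k$, the total-degree-$0$ homology is exactly $H^k(N^{\co}A)$, with no convergence or $\varprojlim^1$ issues. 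Your first route via the Bousfield--Kan spectral sequence $E_2^{s,t} = \pi^s \pi_t \Rightarrow \pi_{t-s}\cat{Tot}$ also works, and you correctly flag its delicate points; for completeness note that they are harmless here: every cosimplicial simplicial abelian group is automatically Reedy fibrant (by the dual Dold--Kan decomposition the matching maps are split surjections of simplicial groups, hence Kan fibrations), the identification $\pi^s = H^s(N^{\co}(-))$ is the standard cohomotopy computation, and for fixed total degree only the single bidegree $(s,t) = (k-n,k)$ contributes, so the tower $\{\cat{Tot}_s\}$ has eventually constant homotopy groups and converges completely. The one assertion you state without proof --- that $\cat{Tot}$ of a cosimplicial simplicial abelian group is computed by the total complex --- is precisely the nontrivial content that the cited lemma encapsulates, so if this were written out in full that identification (via the $N$-preserves-ends argument above, or a reference) is the step to make explicit; as a proof sketch, nothing in your argument would fail.
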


So substituting for the cosimplicial abelian group $A(QX)$ in Proposition \ref{prop simplifying Tot for cosimplicial abelian groups} we have the main result of this section, which provides a concrete way of computing the $\infty$-stack cohomology of a diffeological space with values in an abelian diffeological group.

\begin{Cor} \label{cor explicit description of infinity stack cohomology}
If $X$ is a diffeological space, and $A$ is an abelian diffeological group, and we consider the cochain complex
$$C^{co}[A(QX)] = A^B \xrightarrow{d} A^{QX_1} \xrightarrow{d} A^{QX_2} \xrightarrow{d} \dots$$
then the $n$th $\infty$-stack cohomology of $X$ with values in $A$ can be computed by $C^{co}[A(QX)]$, namely
$$\check{H}^n_\infty(X,A) \cong \check{H}^n_\infty(N^{co} [A(QX)]) \cong \check{H}^n_\infty(C^{co} [A(QX)]).$$
\end{Cor}

This explicit description of $\infty$-sheaf cohomology will be useful in comparing the various \v{C}ech cohomologies.

\begin{Prop} \label{prop piz and infinity sheaf cohomology agree on 0}
For a diffeological space $X$, and a diffeological abelian group $A$, the $\infty$-sheaf cohomology and Iglesias-Zemmour cohomology agree in degree 0:
$$\check{H}^0_{PIZ}(X, A) = \check{H}^0_\infty(X,A).$$
\end{Prop}

\begin{proof}
The set of $0$-cocycles in $\infty$-sheaf cohomology is the set
$$\check{H}^0_\infty(X,A) = \left \{ \tau: B \to A \, | \, \text{if } f: U_p \to U_q \text{ is a map of plots then } \tau \circ \delta f = \tau \right \},$$
where $\delta f$ denotes the map $\delta f: B \to B$ that is the identity on every component except for $U_p$, where it is $f$. Equivalently it is the set of smooth maps $\tau: B \to A$ such that if $(x_{p_1}, f_0) \in QX_1$, then $\tau(f_0(x_{p_1})) =\tau(x_{p_1})$.
The set of $0$-cocycles in Iglesias-Zemmour cohomology is
$$\check{H}^0_{PIZ}(X, A) = \left \{ \sigma : B \to A \, | \, \text{if } m \in M, \text{then }\sigma \circ m = \sigma \right \}.$$
Equivalently it is the set of smooth maps $\sigma: B \to A$ such that if $(x_{p_1}, m) \in B \times M$, then $\sigma(m(x_{p_1})) = \sigma(x_{p_1})$. Notice that $\check{H}^0_{PIZ}(X, A) \subseteq \check{H}^0_\infty(X, A)$, since every $\delta f$ is an element of $M$. Now if $\tau \in \check{H}^0_\infty(X, A)$, and $(x_{p_1}, m) \in B \times M$, then $\tau(m(x_{p_1})) = \tau(x_{p_1})$, because $m|_{U_{p_1}}$ is a map of plots. Thus $\check{H}^0_\infty(X,A) \subseteq \check{H}^0_{PIZ}(X, A).$
\end{proof}

Note that 
\begin{equation} \label{eqn simplifying 0 infinity sheaf cohomology}
    \pi_0 \u{\spre(\cart)}(QX,{}^c A) \cong \cat{Pre}(\cart)(\pi_0 QX, A) \cong \cat{Pre}(\cart)(X, A) \cong \cat{Diff}(X,A),
\end{equation}
where the first isomorphism follows from the adjunction described in Section \ref{section smooth higher stacks}, and the second isomorphism follows from Remark \ref{rem coequalizer in diff same as pi0}.

\begin{Cor} \label{cor cech cohomology in degree 0}
If $X$ is a diffeological space, and $A$ a diffeological abelian group, then
\begin{equation}
    \check{H}^0_\infty(X,A) \cong \check{H}^0_{PIZ}(X,A) \cong \check{H}^0_{KWW}(X,A) \cong \cat{Diff}(X,A).
\end{equation}
\end{Cor}

\begin{proof}
This follows from (\ref{eqn simplifying 0 infinity sheaf cohomology}), Proposition \ref{prop piz and infinity sheaf cohomology agree on 0} and \cite[Proposition 4.6]{krepski2021sheaves}.
\end{proof}

Now recall the map $q \circ \delta : QX \to \check{C}(X)$ from (\ref{eqn maps between resolutions}). This induces a map on cohomology which we will denote by $(q \delta)^* = \varphi : \check{H}^\bullet_{KWW}(X,A) \to \check{H}^\bullet_\infty(X,A)$. In degree $1$, $\varphi$ has the following explicit description on cocycles. Namely if $\tau : \check{C}(X)_1 \to A$ is a cocycle, then $(\varphi \tau)(x_{p_1}, f_0) = \tau(x_{p_1}, f_0(x_{p_1}))$.

\begin{Prop} \label{prop kww cohomology and infinity stack cohomology agree in degree 1}
For any diffeological space $X$ and abelian diffeological group $A$, the map $\varphi: \check{H}^1_{KWW}(X,A) \to \check{H}^1_\infty(X,A)$ is an isomorphism.
\end{Prop}

\begin{proof}
Let us show that $\varphi$ is surjective. Suppose that $\sigma$ is a $1$-cocycle for the cochain complex $A^{QX}$. This means that if $f_1, f_0$ are composable maps of plots, then
$$\sigma(f_1(x_{p_2}), f_0) = \sigma(x_{p_2}, f_0 f_1) - \sigma(x_{p_2}, f_1).$$
Now if $(x_{p_1}, f_0) \in QX_1$, then notice we have the following commutative diagram of plot maps
\begin{equation*}
    \begin{tikzcd}
	& {*} \\
	{U_{p_1}} && {U_{p_0}} \\
	& X
	\arrow["{p_1}"', from=2-1, to=3-2]
	\arrow["{p_0}", from=2-3, to=3-2]
	\arrow["{x_{p_1}}"', from=1-2, to=2-1]
	\arrow["{f_0(x_{p_1})}", from=1-2, to=2-3]
	\arrow["{f_0}", from=2-1, to=2-3]
\end{tikzcd}
\end{equation*}
which implies that if $\sigma$ is a $1$-cocycle that $\sigma(x_{p_1}, f_0) = \sigma(*, f_0(x_{p_1})) - \sigma(*, x_{p_1}).$ So consider the map $\tau: \check{C}(X)_1 \to A$ defined as follows. If $(x_p, y_q) \in \check{C}(X)_1$, then let $\tau(x_p, y_q) = \sigma(*, y_q) - \sigma(*, x_p)$. Then $(\varphi \tau)(x_{p_1}, f_0) = \sigma(x_{p_1}, f_0)$ for every $(x_{p_1}, f_0) \in QX_1$.

Now we wish to show that $\varphi$ is injective. Suppose $\tau, \tau' : \check{C}(X)_1 \to A$ are $1$-cocycles such that there exists some $\alpha: B \to A$ such that for every $(x_{p_1}, f_0) \in QX_1$, 
$$\tau(x_{p_1}, f_0(x_{p_1})) - \tau'(x_{p_1}, f_0(x_{p_1})) = \alpha(f_0(x_{p_1})) - \alpha(x_{p_1}).$$
Then if $(x_p, y_q) \in \check{C}(X)_1$, we have the following commutative diagram of plot maps
\begin{equation*}
    \begin{tikzcd}
	& {*} \\
	{U_p} && {V_q} \\
	& X
	\arrow["p"', from=2-1, to=3-2]
	\arrow["q", from=2-3, to=3-2]
	\arrow["{x_p}"', from=1-2, to=2-1]
	\arrow["{y_q}", from=1-2, to=2-3]
	\arrow["z"{description}, from=1-2, to=3-2]
\end{tikzcd}
\end{equation*}
where $z = p(x_p) = q(y_q)$, and we use $z$ to refer to the point $*$ in the plot $z: * \to X$. Now since $\tau$ and $\tau'$ are $1$-cocycles, it follows that
$$\tau(x_p, y_q) = \tau(z,y_q) - \tau(z, x_p), \qquad \tau'(x_p, y_q) = \tau(z, y_q) - \tau(z, x_p).$$
Therefore
\begin{equation*}
    \begin{aligned}
    \tau(x_p, y_q) - \tau'(x_p, y_q) & = (\tau(z,y_q) - \tau(z, x_p)) - (\tau'(z,y_q) - \tau'(z,x_p)) \\
    & = (\tau(z, y_q) - \tau'(z,y_q)) - (\tau(z,x_p) - \tau'(z,x_p)) \\
    & = (\alpha(y_q) - \alpha(z)) - (\alpha(x_p) - \alpha(z)) \\
    & = \alpha(y_q) - \alpha(x_p).
    \end{aligned}
\end{equation*}
which means that $\tau$ and $\tau'$ differ by a coboundary in $A^{\check{C}(X)}$, so $\varphi$ is injective.
\end{proof}

From (\ref{eqn maps between resolutions}) we obtain the following diagram of abelian groups for every $k \geq 0$
\begin{equation} \label{eq diagram in cohomology}
    \begin{tikzcd}
	{\check{H}^k_{\infty}(X, A)} && {\check{H}^k_{PIZ}(X,A)} \\
	& {\check{H}^k_{KWW}(X,A)}
	\arrow["{(q\delta)^*}", from=2-2, to=1-1]
	\arrow["{\text{res}^*}"', shift right, from=1-1, to=1-3]
	\arrow["{\delta^*}"', shift right=2, from=1-3, to=1-1]
	\arrow["{q^*}"', from=2-2, to=1-3]
\end{tikzcd}
\end{equation}
We summarize everything we know about this diagram
\begin{enumerate}
    \item In degree $0$, all of the above maps are the identity by Corollary \ref{cor cech cohomology in degree 0},
    \item In degree $1$, the map $(q \delta)^*$ is an isomorphism by Proposition \ref{prop kww cohomology and infinity stack cohomology agree in degree 1},
    \item The map $\delta^*$ is a retraction, namely $\delta^* \text{res}^* = 1_{\check{H}^k_{\infty}(X,A)}$, so $\text{res}^*$ is injective and $\delta^*$ is surjective for all $k \geq 0$,
    \item We have $\text{res}^* \delta^* q^* = q^*$ for all $k \geq 0$, thus the above diagram actually commutes
    \item The map $q^*$ is injective for $k = 1$ by \cite[Lemma 6.9]{krepski2021sheaves}. Notice that this can also be seen by noting that since $(q \delta)^*$ is an isomorphism for $k = 1$, and $\text{res}^*$ is injective, and since $\text{res}^* \delta^* q^* = \text{res}^* (q \delta)^* = q^*$, then $q^*$ is injective for $k = 1$.

\end{enumerate}

It is not currently known if any of the above maps are isomorphisms for all $k \geq 0$.

\section{Principal Diffeological Bundles as Principal Infinity Bundles} \label{section principal diffeological bundles as principal infinity bundles} Principal Infinity Bundles were defined in \cite{NSSGeneral} and \cite{NSSPresentations}. In this section, we compare this abstract notion to diffeological principal bundles. 

\begin{Rem}
The following two definitions are needed only for Definition \ref{def principal infinity bundle} and are not used elsewhere in this paper.
\end{Rem}

\begin{Def}[{\cite[Section 3]{dugger2004hypercovers}}]
A map $f: X \to Y$ of simplicial presheaves on $\cart$ is a \textbf{local fibration} if for every $U \in \cart$, there exists a good open cover $\{U_i \subseteq U \}$ such that for every element $U_i$ of the good open cover, there is a lift in every commutative diagram of the following form.
\begin{equation}
    \begin{tikzcd}
	{\Lambda^n_k} & {X(U)} & {X(U_i)} \\
	{\Delta^n} & {Y(U)} & {Y(U_i)}
	\arrow[hook, from=1-1, to=2-1]
	\arrow[from=1-1, to=1-2]
	\arrow[from=2-1, to=2-2]
	\arrow[from=2-2, to=2-3]
	\arrow[from=1-2, to=1-3]
	\arrow["f", from=1-3, to=2-3]
	\arrow[dashed, from=2-1, to=1-3]
\end{tikzcd}
\end{equation}
\end{Def}

Note that an objectwise fibration of simplicial presheaves is a local fibration. We say that a simplicial presheaf $X$ is \textbf{locally fibrant} if the unique map $X \to *$ is a local fibration.

\begin{Def}[{\cite[Theorem 6.15]{dugger2004weak}}]
A map $f: X \to Y$ of simplicial presheaves on $\cart$ is a \textbf{local weak equivalence} if for every $U \in \cart$, there exists a good open cover $\{U_i \subseteq U \}$ such that for every element $U_i$ of the good open cover, there is a dotted arrow in every commutative diagram of the following form,
\begin{equation}
    \begin{tikzcd}
	{\partial \Delta^n} & {RX(U)} & {RX(U_i)} \\
	{\Delta^n} & {RY(U)} & {RY(U_i)}
	\arrow[hook, from=1-1, to=2-1]
	\arrow[from=1-1, to=1-2]
	\arrow[from=2-1, to=2-2]
	\arrow[from=2-2, to=2-3]
	\arrow[from=1-2, to=1-3]
	\arrow["Rf", from=1-3, to=2-3]
	\arrow[dashed, from=2-1, to=1-3]
\end{tikzcd}
\end{equation}
where $R$ is a fibrant replacement functor for $\cat{sSet}$ and the top left triangle commutes strictly, while the bottom right triangle commutes up to a homotopy relative to $\partial \Delta^n \hookrightarrow \Delta^n$.
\end{Def}

Note that an objectwise weak equivalence is a local weak equivalence, and \cite{dugger2004hypercovers} proves that \v{C}ech weak equivalences are local weak equivalences.

\begin{Def}[{\cite[Definition 3.79]{NSSPresentations}}] \label{def principal infinity bundle}
Let $G$ be a presheaf of simplicial groups acting on a simplicial presheaf $P$ by $\rho: P \times G \to P$. Then a map $\pi: P \to X$ is a \textbf{$G$-Principal $\infty$-bundle}\footnote{In \cite{NSSPresentations}, what we call principal $\infty$-bundles are known as weakly principal $G$-bundles. Also, they only define this for simplicial sheaves, but there are no problems extending the definition and all of the theorems in that paper to simplicial presheaves.} if:
\begin{enumerate}
    \item $\pi$ is a local fibration,
    \item The action of $G$ on $P$ is fiberwise, namely $\rho(g,-)$ sends fibers to fibers, and
    \item the map
    $$P \times G \to P \times_X P$$
    given by 
    $$(p,g) \mapsto (p, \rho(p,g))$$
    is a local weak equivalence.
\end{enumerate}
A map $P \xrightarrow{f} P'$ of $G$-principal $\infty$-bundles over $X$ is a map that is $G$-equivariant and commutes with the bundle projections. Namely, it is a map fitting into the following commutative diagram:
\begin{equation*}
    \begin{tikzcd}
	{P \times G} && {P' \times G} \\
	P && {P'} \\
	& X
	\arrow["f", from=2-1, to=2-3]
	\arrow["\pi"', from=2-1, to=3-2]
	\arrow["{\pi'}", from=2-3, to=3-2]
	\arrow["\rho"', from=1-1, to=2-1]
	\arrow["{\rho'}", from=1-3, to=2-3]
	\arrow["{f \times 1_G}", from=1-1, to=1-3]
\end{tikzcd}
\end{equation*}
Let $\cat{Prin}^\infty_G(X)$ denote the category of $G$-principal $\infty$-bundles on $X$.
\end{Def}

It is clear that if $\pi: P \to X$ is a diffeological principal $G$-bundle, then it is a $G$-principal $\infty$-bundle, when we think of $X$, $G$ and $P$ as discrete simplicial presheaves. This is because all maps between discrete simplicial presheaves are local fibrations, and all diffeomorphisms between diffeological spaces are local weak equivalences. Note that $\cat{DiffPrin}_G(X)$ is a groupoid, while in general $\cat{Prin}_G^\infty(X)$ is not a groupoid. So while these categories are not equivalent, we will prove that their nerves are weak homotopy equivalent.

\begin{Prop} \label{prop Rhom is principal infinity bundles}
Let $X$ be a locally fibrant simplicial presheaf and $G$ a presheaf of simplicial groups. Then, there is a weak homotopy equivalence of simplicial sets
\begin{equation}
    \R \Hom(X, \bold{B}G) \simeq N \cat{Prin}_G^\infty(X).
\end{equation}
\end{Prop}

\begin{proof}
First if $X$ and $Y$ are locally fibrant simplicial presheaves, then combining \cite[Lemma 6.4]{low2015cocycles} with \cite[Theorem 3.12]{low2015cocycles} and \cite[Corollary 4.7]{dwyer1980function} proves that
$$\R \Hom(X,Y) \simeq N \text{Cocycle}(X,Y)$$
where $\text{Cocycle}(X,Y)$ is the cocycle category as defined in \cite[Definition 3.1]{low2015cocycles}. Then \cite[Theorem 3.95]{NSSPresentations} proves that 
$$N \text{Cocycle}(X,\bold{B}G) \simeq N\cat{Prin}_G^\infty(X).$$
Since all presheaves of simplicial groups are locally fibrant, combining these gives the desired result.
\end{proof}

Since all diffeological spaces are locally fibrant, if $X$ is a diffeological space and $G$ is a diffeological group, to prove that $N \cat{DiffPrin}_G(X)$ is weak equivalent to $N \cat{Prin}_G^\infty(X)$, it suffices to show that $N \cat{DiffPrin}_G(X)$ is weak homotopy equivalent to $\R \Hom(X, \bold{B}G)$. Let us examine $\R \Hom(X, \bold{B} G)$ more deeply. In Section \ref{section resolutions of diffeological spaces} we saw that this is equal to the simplicial set $\u{\spre(\cart)}(QX, \bold{B}G)$. Now, if we consider the definition of $QX$ given in Lemma \ref{lem cofibrant replacement of diff space}, then by the same computation as (\ref{eqn spre as Tot}), we have
\begin{equation} \label{eqn spre as Tot for diff spaces and BG}
    \u{\spre(\cart)}(QX, \bold{B}G) \cong \cat{Tot}(\bold{B}G(QX)).
\end{equation}

A $k$-simplex of $\cat{Tot}(\bold{B}G(QX))$ contains a huge amount of information, but in this case, since $\bold{B}G$ is objectwise the nerve of a groupoid, most of this information will be redundant. Let us describe what a vertex of this simplicial set is. It is a map of cosimplicial simplicial sets $\Delta^\bullet \to \bold{B}G(QX)$. This means it is a commutative diagram of the form\footnote{Where we exclude the codegeneracy maps from the notation for clarity.}:
\begin{equation*}
\begin{tikzcd}
	{\Delta^0} & {\Delta^1} & {\Delta^2} & \dots \\
	{\prod_{\cat{Plot}(X)} \bold{B}G(U_{p_0})} & {\prod_{N_1} \bold{B}G(U_{p_1})} & {\prod_{N_2} \bold{B}G(U_{p_2})} & \dots
	\arrow[shift left=2, from=1-1, to=1-2]
	\arrow[shift right=2, from=1-1, to=1-2]
	\arrow["{g^1}"', from=1-2, to=2-2]
	\arrow[shift left=3, from=1-2, to=1-3]
	\arrow[from=1-2, to=1-3]
	\arrow[shift right=3, from=1-2, to=1-3]
	\arrow[shift left=3, from=2-2, to=2-3]
	\arrow[shift right=3, from=2-2, to=2-3]
	\arrow[from=2-2, to=2-3]
	\arrow["{g^2}"', from=1-3, to=2-3]
	\arrow["{g^0}"', from=1-1, to=2-1]
	\arrow[shift left=2, from=2-1, to=2-2]
	\arrow[shift right=2, from=2-1, to=2-2]
\end{tikzcd}
\end{equation*}

Let us unravel what this means. Firstly, $g^0$ contributes no information, as $\bold{B}G(U_{p_0})_0 = C^\infty(U_{p_0}, *) = *$. However, $g^1$ is the data of maps $g^1(f_0) \coloneqq g_{f_0}: U_{p_1} \to G$ for every map of plots $f_0: U_{p_1} \to U_{p_0}$. Now $g^2$ is the data of a map $g^2(f_1, f_0) \coloneqq g_{f_1, f_0}: U_{p_2} \to G \times G$ for every pair of composable maps of plots $U_{p_2} \xrightarrow{f_1} U_{p_1} \xrightarrow{f_0} U_{p_0}$. Let $g^2(f_1, f_0) = (h,k)$. The data of the above cosimplicial  map insists that
$$(d^0 g^1)(f_1,f_0) = g^1(f_0) \circ f_1 = (g^2 d^0)(f_1,f_0) = d_0 (g^2(f_1,f_0)) = d_0(h,k) = k$$
$$(d^1 g^1)(f_1,f_0) = g^1(f_0 f_1) = (g^2 d^1)(f_1, f_0) = d_1 (g^2(f_1,f_0)) = d_1(h,k) = kh\footnote{If this seems strange, see Example \ref{ex delooping stack}.}$$
$$(d^2 g^1)(f_1, f_0) = g^1(f_1) = (g^2 d^2)(f_1, f_0) = d_2 (g^2(f_1, f_0)) = d_2(h,k) = h.$$
In other words
\begin{equation*}
    g_{f_0 f_1} = (g_{f_0} \circ f_1) \cdot g_{f_1}.
\end{equation*} 
This is precisely the diffeological $G$-cocycle condition (\ref{eqn diff cocycle condition}). We can visualize this as a triangle:
\begin{equation*}
\begin{tikzcd}
	& \bullet \\
	\bullet && \bullet
	\arrow[""{name=0, anchor=center, inner sep=0}, "{g_{f_0 f_1}}"', from=2-1, to=2-3]
	\arrow["{g_{f_1}}", from=2-1, to=1-2]
	\arrow["{g_{f_0} \circ f_1}", from=1-2, to=2-3]
	\arrow["{g^2(f_1,f_0)}"{description, pos=0.6}, Rightarrow, draw=none, from=1-2, to=0]
\end{tikzcd}
\end{equation*}
which is filled in if the cocycle condition (\ref{eqn diff cocycle condition}) holds. In other words, a map $QX \to \B G$ is precisely the same information as a $G$-cocycle $g$ on $X$.

Now here's an important point: $g^3$ will provide no further data. We will explain why using the notion of coskeleton.

\begin{Def}
A simplicial set $X$ is \textbf{$k$-coskeletal} if for every boundary $\partial \Delta^n \to X$, there exists a unique $n$-simplex $\Delta^n \to X$ making the following diagram commute:
\begin{equation*}
    \begin{tikzcd}
	{\partial \Delta^n} & X \\
	{\Delta^n}
	\arrow[hook, from=1-1, to=2-1]
	\arrow[from=1-1, to=1-2]
	\arrow[from=2-1, to=1-2]
\end{tikzcd}
\end{equation*}
for all $n > k$.
\end{Def}

For any $k$, let $\cat{sSet}_{\leq k}$ denote the category of $k$-truncated simplicial sets, namely presheaves on the full subcategory $\cat{\Delta}_{\leq k}$ of $\cat{\Delta}$ whose objects are partial orders $[n]$ for $n \leq k$. There is a functor $\tau_k: \cat{sSet} \to \cat{sSet}_{\leq k}$ just given by forgetting the higher simplices of the simplicial set. This functor has a fully faithful left adjoint $\text{sk}_k$ and a fully faithful right adjoint $\text{cosk}_k$. A simplicial set $X$ is $k$-coskeletal if the unit of the adjunction $X \to \text{cosk}_k(X)$ is an isomorphism. For more details see \cite[Section VII.1]{goerss2012simplicial}.

If $X = N(\site)$ is the nerve of a category $\site$, then $X$ is $2$-coskeletal \cite[Lemma I.3.5]{goerss2012simplicial}. In our case $\bold{B}G(QX)$ is a cosimplicial simplicial set such that $\bold{B}G(QX_n)$ is the nerve of a groupoid and therefore $2$-coskeletal for every $n$. Now as we've seen, the $3$-simplex $g^3 \in \bold{B}G(QX_3)$ is required to satisfy that $\partial g^3 = (d^0 g^1, d^1 g^1, d^2 g^1, d^3 g^1)$. But that means we've just specified a $3$-boundary in a $2$-coskeletal simplicial set. Thus there exists a unique filler $g^3$. This of course continues, so that a vertex $g \in \cat{Tot}(\bold{B}G(QX))_0$ determines and is completely determined by $g^1$ and $g^2$.

Let us repeat the above analysis for a $1$-simplex in $\cat{Tot}(\bold{B}G(QX))$. This is the data of a commutative diagram:
\begin{equation*}
    \begin{tikzcd}
	{\Delta^0 \times \Delta^1} & {\Delta^1 \times \Delta^1} & {\Delta^2 \times \Delta^1} & \dots \\
	{\prod_{N_0} \bold{B}G(U_{p_0})} & {\prod_{N_1} \bold{B}G(U_{p_1})} & {\prod_{N_2} \bold{B}G(U_{p_2})} & \dots
	\arrow[shift left=2, from=1-1, to=1-2]
	\arrow[shift right=2, from=1-1, to=1-2]
	\arrow[shift left=3, from=1-2, to=1-3]
	\arrow[shift right=3, from=1-2, to=1-3]
	\arrow[from=1-2, to=1-3]
	\arrow["{h^0}"', from=1-1, to=2-1]
	\arrow["{h^1}"', from=1-2, to=2-2]
	\arrow["{h^2}"', from=1-3, to=2-3]
	\arrow[shift left=2, from=2-1, to=2-2]
	\arrow[shift right=2, from=2-1, to=2-2]
	\arrow[shift left=3, from=2-2, to=2-3]
	\arrow[shift right=3, from=2-2, to=2-3]
	\arrow[from=2-2, to=2-3]
\end{tikzcd}
\end{equation*}
Now unravelling this diagram, skipping some similar details, such a $1$-simplex consists of the following data. If $g$ and $g'$ are $0$-simplices in $\cat{Tot}(\bold{B}G(QX))$ consisting of collections of maps $\{ g_f \}$ and $\{ g'_f \}$, then a $1$-simplex is a collection of maps $\{ h_{p_0}: U_{p_0} \to G \}$ indexed by plots $p_0: U_{p_0} \to X$ such that if $f_0: U_{p_1} \to U_{p_0}$ is a map of plots, then 
\begin{equation*}
    g'_{f_0} \cdot h_{p_1} = (h_{p_0} \circ f_0) \cdot g_{f_0},
\end{equation*}
and this is precisely a morphism of diffeological $G$-cocycles (\ref{eqn map of diff cocycles}). By the same reasoning as before, the rest of the diagram provides no further conditions on this data, as the maps $\Delta^k \times \Delta^1 \to \bold{B}G(QX_k)$ will consist of $(k+1)$-simplices, and $\bold{B}G(QX_k)$ is $2$-coskeletal, so that $h$ depends only on $h^0$ and $h^1$. Namely given $h^0$ and $h^1$, the $h^k$ for $k > 1$ are fully determined.

A $2$-simplex in $\cat{Tot}(\bold{B}G(QX))$ will similarly be completely determined by its boundary. Similar reasoning also proves that there are no additional conditions coming from higher $k$-simplices of $\cat{Tot}(\bold{B}G(QX))$. In other words, $\cat{Tot}(\bold{B}G(QX))$ is $2$-coskeletal. Further, since $\cechpre(\cart)$ is a simplicial model category and $\R \Hom(X, \bold{B}G) \cong \cat{Tot}(\bold{B}G(QX))$, this implies that $\cat{Tot}(\bold{B}G(QX))$ is a Kan complex. This combined with the fact that it is $2$-coskeletal implies that for any basepoint $g$, $\pi_k(\cat{Tot}(\bold{B}G(QX)),g) = 0$ for $k > 1$.

Now that we have an explicit description of $\cat{Tot}(\bold{B}G(QX))$ it is clear that this is nothing more than a diffeological version of the cocycle construction from classical differential geometry. Let us formalize this now.

We want to construct a map $\Phi: \cat{Tot}(\bold{B}G(QX)) \to N \cat{Coc}(X,G)$. Consider the left adjoint $h: \cat{sSet} \to \cat{Cat}$ to the nerve functor $N$, that sends a simplicial set to its homotopy category \cite[Example 1.5.5]{riehl2014categorical}, namely if $X$ is a simplicial set, then $hX$ is the category whose objects are the vertices of $X$, morphisms are freely generated by the $1$-simplices of $X$ and then quotiented by the $2$-simplices, in the sense that if $\sigma$ is a $2$-simplex in $X$ with $d_0 \tau = x, d_1 \tau = y, d_2 \tau = z$, then $x \circ z = y$ in $hX$. Note that by unravelling the above definitions, the composition of two morphisms $h' \circ h$ in $h\cat{Tot}(\bold{B}G(QX))$ is given by multiplication $h' \cdot h$ as in Definition \ref{def cocycle}. 

Let $\Phi: h \cat{Tot}(\bold{B}G(QX)) \to \cat{Coc}(X,G)$ denote the functor that sends an object $g = (g^0, g^1, \dots)$ to the cocycle it defines $\{g_{f_0} \}$, and a morphism $h = (h^0, h^1, \dots)$ to the morphism of cocycles it defines $\{h_p \}$. By the above discussion it is evident that this functor defines (one half of) an isomorphism of categories. In summary we have proved the following.

\begin{Lemma} \label{lem cocycle cat isomorphic to Tot of BG}
There is an isomorphism of simplicial sets
\begin{equation}
    N \cat{Coc}(X,G) \cong \cat{Tot}(\bold{B}G(QX)).
\end{equation}
\end{Lemma}

We are now in a position to prove the main theorem of this paper.
 
\begin{Th} \label{th diffeological principal bundles are infinity bundles}
The nerve of the category of diffeological principal $G$-bundles on $X$ and the nerve of the category of $G$-principal $\infty$-bundles on $X$ are weak homotopy equivalent
\begin{equation}
    N \cat{Prin}_G^\infty(X) \simeq N \cat{DiffPrin}_G(X).
\end{equation}
\end{Th}

\begin{proof}
Since the nerve functor sends equivalences of categories to homotopy equivalences of simplicial sets, Theorem \ref{th diff cocycle theorem} implies that there is a homotopy equivalence of simplicial sets $N \cat{Coc}(X,G) \simeq N \cat{DiffPrin}_G(X)$. Combining this with Proposition \ref{prop Rhom is principal infinity bundles} and Lemma \ref{lem cocycle cat isomorphic to Tot of BG} gives the result.
\end{proof}

\begin{Cor} \label{cor nonabelian cohomology}
Given a diffeological space $X$ and a diffeological group $G$, there is an isomorphism of pointed sets
\begin{equation}
    \check{H}^1_{\infty}(X, G) \cong \pi_0 \cat{DiffPrin}_G(X),
\end{equation}
where $\pi_0 \cat{DiffPrin}_G(X)$ denotes the set of isomorphism classes of diffeological principal $G$-bundles on $X$, pointed at the isomorphism class of trivial bundles.
\end{Cor}

We can still say more about the correspondence of Theorem \ref{th diffeological principal bundles are infinity bundles}. As in the paper \cite{NSSPresentations}, it is useful to see how one can obtain an actual diffeological principal $G$-bundle $\pi : P \to X$ from a $G$-cocycle $QX \to \B G$ using simplicial presheaves. It is basically a reformulation of Theorem \ref{th diff cocycle theorem}.

Given a diffeological group $G$, consider the diffeological groupoid
$$ G \times G \underset{m}{\overset{\text{pr}_1}{\rightrightarrows}} G$$
where the source map is the first projection and the target map is the map $m(g,h) = hg$\footnote{If this seems strange, see Example \ref{ex delooping stack}}. We can visualize morphisms in this groupoid by
\begin{equation*}
g \xrightarrow{(g,h)} hg \xrightarrow{(hg, k)} khg \qquad  =  \qquad g \xrightarrow{(g, kh)} khg. 
\end{equation*}
This defines a presheaf of groupoids on $\cart$ by
$$\left[ U \mapsto \left( C^\infty(U, G \times G) \rightrightarrows C^\infty(U,G) \right) \right].$$
Applying the nerve functor objectwise gives a simplicial presheaf which we denote by $\bold{E}G$.
There is a canonical map of simplicial presheaves $\bold{E}G \to \bold{B}G$ induced by the corresponding map of diffeological groupoids/presheaves of groupoids:
\begin{equation*}
    \begin{tikzcd}
	{G \times G} & G \\
	G & {*}
	\arrow["{\text{pr}_1}", shift left=2, from=1-1, to=1-2]
	\arrow["m"', shift right=2, from=1-1, to=1-2]
	\arrow["{\text{pr}_2}"', from=1-1, to=2-1]
	\arrow[shift left=2, from=2-1, to=2-2]
	\arrow[shift right=2, from=2-1, to=2-2]
	\arrow[from=1-2, to=2-2]
\end{tikzcd}
\end{equation*}
This functor can be visualized as
\begin{equation*}
    \begin{tikzcd}
	g && hg \\
	{*} && {*}
	\arrow["{(g,h)}", from=1-1, to=1-3]
	\arrow["h", from=2-1, to=2-3]
	\arrow[shorten <=2pt, shorten >=2pt, maps to, from=1-1, to=2-1]
	\arrow[shorten <=2pt, shorten >=2pt, maps to, from=1-3, to=2-3]
\end{tikzcd}
\end{equation*}
Furthermore this map is an objectwise Kan fibration, and therefore a projective fibration of simplicial presheaves.

\begin{Rem}
The simplicial presheaf $\bold{E}G$ and the map $\bold{E}G \to \B G$ described above are well known in the literature in the form $WG \to \conj{W}G$, which can be defined when $G$ is any presheaf of simplicial groups, see \cite{stevenson2012decalage} or \cite{NSSPresentations}. In this case we are using the same convention as \cite{goerss2012simplicial} where $W G = \text{Dec}^0 \overline{W}G$ and the map $WG \to \conj{W}G$ is given degreewise by $d_0^{\conj{W}G} : G^{\times k + 1} \to G^{\times k}$.
\end{Rem}

Now with such a map $g: QX \to \bold{B}G$, we can consider the following pullback in the category of simplicial presheaves.
\begin{equation} \label{eq pullback infinity bundle}
    \begin{tikzcd}
	{\widetilde{P}} & \bold{E}G \\
	QX & \bold{B}G
	\arrow[from=1-1, to=2-1]
	\arrow["g", from=2-1, to=2-2]
	\arrow[from=1-2, to=2-2]
	\arrow[from=1-1, to=1-2]
	\arrow["\lrcorner"{anchor=center, pos=0.125}, draw=none, from=1-1, to=2-2]
\end{tikzcd}
\end{equation}

\begin{Rem}
In the situation above, $\widetilde{P}$ is a $G$-principal $\infty$-bundle, and the construction of taking this pullback is precisely the map $\text{Rec}$ described in \cite[Definition 3.93]{NSSPresentations}.
\end{Rem}

Thus $\widetilde{P}$ is a simplicial presheaf, or equivalently a simplicial diffeological space such that
$$\widetilde{P}_1 = \coprod_{U_{p_1} \xrightarrow{f_0} U_{p_0}} U_{p_1} \times_G (G \times G), \qquad \widetilde{P}_0 = \coprod_{p_0 \in \cat{Plot}(X)} U_{p_0} \times G.$$
In other words, $\widetilde{P}_0$ is precisely the diffeological space $\widehat{P}$ from Section \ref{section diff principal bundles}, and the coequalizer of the face maps $\widetilde{P}_1 \rightrightarrows \widetilde{P}_0$, which is precisely the presheaf obtained by taking $\pi_0 \widetilde{P}$, is precisely the diffeological space $P = \cat{Cons}(g)$ constructed in Section \ref{section diff principal bundles}. 

Now let us show that the canonical map $q: \widetilde{P} \to P$ is an objectwise weak equivalence. First we notice that $QX$, $\B G$, and $\bold{E} G$ are nerves of presheaves of categories. Since the nerve functor is a right adjoint, we have that
\begin{equation*}
    \widetilde{P} \cong N[QX_1 \rightrightarrows B] \times_{N[G \rightrightarrows *]} N[G \times G \rightrightarrows G] \cong N([QX_1 \rightrightarrows B] \times_{[G \rightrightarrows *]} [G \times G \rightrightarrows G]).
\end{equation*}
Thus $\widetilde{P}$ as a simplicial presheaf is the nerve of a diffeological category/presheaf of categories, a morphism of which we can visualize in the same way as in Section \ref{section diff principal bundles}
\begin{equation*}
    (x_{p_1}, k_1) \xrightarrow{f_0} (x_{p_0}, k_0),
\end{equation*}
where $f_0(x_{p_1}) = x_{p_0}$ and $k_0 = g_{f_0}(x_{p_1}) \cdot k_1$.

Now think of $P$ as a presheaf of discrete categories with objects equivalence classes $[x_{p_0}, k_0]$. The map $q: \widetilde{P} \to P$ can be seen as the nerve of the functor
\begin{equation*}
    \begin{tikzcd}
	{(x_{p_1}, k_1)} && {(x_{p_0}, k_0)} \\
	{[x_{p_1},k_1]} && {[x_{p_0}, k_0]}
	\arrow["{f_0}", from=1-1, to=1-3]
	\arrow[shorten <=18pt, shorten >=18pt, Rightarrow, no head, from=2-1, to=2-3]
	\arrow[shorten <=2pt, shorten >=2pt, maps to, from=1-1, to=2-1]
	\arrow[shorten <=2pt, shorten >=2pt, maps to, from=1-3, to=2-3]
\end{tikzcd}
\end{equation*}
Now consider the map $f: P \to \widetilde{P}$ defined as follows. If $[x_{p_0}, k_0]$ is an object in $P$, then let $x = p_0(x_{p_0})$. Then consider the pair $(*_x, e_G)$, where $x: * \to X$ is the plot sending $*$ to the point $x \in X$. There is a unique morphism
\begin{equation*}
    (*_x, g^{-1}_{f_0}(x_{p_0}) \cdot k_0) \xrightarrow{x_{p_0}} (x_{p_0}, k_0),
\end{equation*}
given by the map of plots
\begin{equation*}
\begin{tikzcd}
	{*} && {U_{p_0}} \\
	& X
	\arrow["x"', from=1-1, to=2-2]
	\arrow["{p_0}", from=1-3, to=2-2]
	\arrow["{x_{p_0}}", from=1-1, to=1-3]
\end{tikzcd}
\end{equation*}
So set $f[x_{p_0}, k_0] = (*_x, g^{-1}_{f_0}(x_{p_0}) \cdot k_0)$. Then
\begin{equation}
    \begin{aligned}
        \widetilde{P}(f[x_{p_1}, k_1], (x_{p_0}, k_0)) & = \widetilde{P}((*_x, g^{-1}_{f_0}(x_{p_0}) \cdot k_0), (x_{p_0}, k_0)) \\
        & \cong P([*_x, g^{-1}_{f_0}(x_{p_0}) \cdot k_0], [x_{p_0}, k_0]),
    \end{aligned}
\end{equation}
where the second isomorphism holds because the map $(*_x, g^{-1}_{f_0}(x_{p_0}) \cdot k_0) \xrightarrow{x_{p_0}} (x_{p_0}, k_0)$ is the unique map between the source and target, and the existence of such a map means that $[*_x, g^{-1}_{f_0}(x_{p_0}) \cdot k_0] = [x_{p_0}, k_0]$. Thus
\begin{equation*}
    \widetilde{P}((*_x, g^{-1}_{f_0}(x_{p_0}) \cdot k_0), (x_{p_0}, k_0)) \cong P([*_x, g^{-1}_{f_0}(x_{p_0}) \cdot k_0], [x_{p_0}, k_0]) \cong *.
\end{equation*}
Therefore $f$ is an objectwise left adjoint to $q$. Since the nerve functor takes adjoint functors to homotopy equivalences of simplicial sets, we have proven the following.

\begin{Lemma}
The map $q: \widetilde{P} \to P$ of simplicial presheaves is an objectwise homotopy equivalence and therefore a \v{C}ech weak equivalence.
\end{Lemma}

Now notice that $\bold{E} G$ is objectwise contractible, indeed, for every $U$ the groupoid $[G(U) \times G(U) \rightrightarrows G(U)]$ has an initial object given by the constant map at the identity element $e_G$. Thus $\bold{E} G \to *$ is an objectwise weak equivalence. The map $\B G \to \cat{DiffPrin}_G$ is also an objectwise weak equivalence, see Example \ref{ex delooping stack} and Proposition \ref{prop bundles on cartesian spaces are trivial}. Thus there is a map of diagrams, where each component is an objectwise weak equivalence of simplicial presheaves
\begin{equation*}
\begin{tikzcd}
	& {\bold{E}G} \\
	{\widetilde{P}} & {\B G} & {*} \\
	QX & P & {\cat{DiffPrin}_G} \\
	& X
	\arrow[from=3-1, to=2-2]
	\arrow[from=2-1, to=3-1]
	\arrow[from=2-1, to=1-2]
	\arrow[from=1-2, to=2-2]
	\arrow[from=2-3, to=3-3]
	\arrow[from=3-2, to=2-3]
	\arrow[from=4-2, to=3-3]
	\arrow[from=3-2, to=4-2]
	\arrow[from=3-1, to=4-2]
	\arrow[from=2-1, to=3-2]
	\arrow[from=1-2, to=2-3]
	\arrow[from=2-2, to=3-3]
\end{tikzcd} 
\end{equation*}

Thus we have proven the following result.

\begin{Cor} \label{cor homotopy pullback bundle}
Given a diffeological space $X$, diffeological group $G$, and diffeological principal $G$-bundle $\pi : P \to X$, the commutative diagram of $\infty$-stacks
\begin{equation}
   \begin{tikzcd}
	P & {*} \\
	X & {\cat{DiffPrin}_G}
	\arrow["{g}", from=2-1, to=2-2]
	\arrow["{\pi}"', from=1-1, to=2-1]
	\arrow[from=1-1, to=1-2]
	\arrow[from=1-2, to=2-2]
\end{tikzcd} 
\end{equation} 
where $g: X \to \cat{DiffPrin}_G$ sends a plot $p_0: U_{p_0} \to X$ to the diffeological principal $G$-bundle $p_0^* P$, is a homotopy pullback square in the \v{C}ech model structure on $\spre(\cart)$.
\end{Cor}

\begin{proof}
Since $f$ is an objectwise weak equivalence of simplicial presheaves, and since $\widetilde{P}$ is the actual pullback of a projective fibration, it is a homotopy pullback in the projective model structure on simplicial presheaves on $\cart$. By Proposition \cite[Proposition 11.2]{rezktoposes2010}, it is also a homotopy pullback in the \v{C}ech model structure.
\end{proof}

\appendix

\section{Comparison of Site Structures} \label{section comparison of site structures}
Here we will prove that the definition of diffeological spaces as given in Definition \ref{def diffeological space} is equivalent to that usually presented in the literature, such as \cite[Article 1.5]{iglesias2013diffeology}, in the sense that their categories are equivalent. Further we will show other possible alternative definitions that have not appeared in the literature have equivalent categories as well. An example of this is \cite[Lemma 2.9]{watts2014diffeological}. The results of this section include this result.

We will do this by exploiting Theorem \ref{thm diff = consh(cart)}, and studying concrete sheaves over the smooth sites. Now coverages are those collections of families with the least amount of structure with which we can define sheaves on $\site$. There could be many different coverages which give rise to equivalent categories of sheaves. It can therefore be difficult to see directly when coverages give rise to the same sheaves. We will define a more restricted kind of coverage, known as a Grothendieck coverage or Grothendieck topology, which will make such comparison easier.

\begin{Def}
A \textbf{sieve} $R$ is a family of morphisms that is closed under precomposition, namely if $V \xrightarrow{g} U_i$ is a map in $\site$, and $U_i \xrightarrow{r_i} X \in R$, then $V \xrightarrow{r_i g} X \in R$.
\end{Def}

Given a category $\site$ and an object $U \in \site$, there is a bijection between sieves on $X$ and subfunctors $R \hookrightarrow yU$, where $yU = \left( V \mapsto \site(V,U) \right)$ denotes the Yoneda embedding on $U$. Indeed, given a sieve $R$, we can define a subfunctor $\widetilde{R} \hookrightarrow yU$ by setting $\widetilde{R}(V) = \{ f: V \to U \, : \, f \in R \}$ and noting that being a sieve implies that $\widetilde{R}$ is functorial under precomposition, and conversely if $\widetilde{R} \hookrightarrow yU$ is a subfunctor, then we can define a sieve $R$ by setting $R = \bigcup_{V \in \site} \widetilde{R}(V)$. Thus for the rest of this section a sieve will mean both a kind of family of morphisms and a subfunctor of the Yoneda embedding. If $U \in \site$ is an object, then we call $yU$ the \textbf{maximal sieve}. This is equivalently the family of all morphisms with codomain $U$.

For any family of morphisms $r = \{ r_i: U_i \to U \}$ over $U$, we can construct the smallest sieve $R = \overline{r}$ containing it as follows. Let $R$ be the set of morphisms $f: V \to U$ such that $f$ factors as:
\begin{equation*}
    \begin{tikzcd}
	V && U \\
	& {U_i}
	\arrow["f", from=1-1, to=1-3]
	\arrow["g"', from=1-1, to=2-2]
	\arrow["{r_i}"', from=2-2, to=1-3]
\end{tikzcd}
\end{equation*}
where $r_i: U_i \to U \in r$, and $g$ is a morphism in $\site$. In this case we say that $r$ \textbf{generates} the sieve $R$.

\begin{Lemma}[{\cite[C2.1 Lemma 2.1.3]{johnstone2002sketches}}] \label{lem sheaf on covering family iff on sieve it generates}
Suppose that $j$ is a coverage on a category $\site$. Then a presheaf $F$ is a sheaf on a family of morphisms $r = \{ U_i \to U \}$ if and only if it is a sheaf on the sieve $R = \overline{r}$ it generates. 
\end{Lemma}

\begin{Def}
We say that a collection of families $j$ is \textbf{sifted} if every $r \in j(U)$ is a sieve. If $j$ is further a coverage, we call it a sifted coverage. We call covering families of sifted coverages \textbf{covering sieves}.
\end{Def}

\begin{Lemma} \label{lem convenience of using sieves for matching families}
Let $R$ be a sieve over an object $U$ in a category $\site$ and $F$ a presheaf on $\site$. A collection $\{ s_f \in F(V) \}_{f \in R}$ of sections for every $f: V \to U$ in $R$ is a matching family if and only if $F(g)(s_f) = s_{fg}$ for every morphism $g: W \to V$ in $\site$.
\end{Lemma}

\begin{proof}
$(\Rightarrow)$ Suppose $\{s_f \}$ is a matching family, then consider the commutative diagram:
\begin{equation*}
    \begin{tikzcd}
	W & W \\
	V & U
	\arrow["g"', from=1-1, to=2-1]
	\arrow["f"', from=2-1, to=2-2]
	\arrow["gf", from=1-2, to=2-2]
	\arrow[Rightarrow, no head, from=1-1, to=1-2]
\end{tikzcd}
\end{equation*}
this implies that $F(g)(s_f) = s_{fg}$.

$(\Leftarrow)$ Suppose we have a commutative diagram:
\begin{equation*}
    \begin{tikzcd}
	A & {V'} \\
	V & U
	\arrow["h", from=1-1, to=1-2]
	\arrow["g"', from=1-1, to=2-1]
	\arrow["f"', from=2-1, to=2-2]
	\arrow["{f'}", from=1-2, to=2-2]
\end{tikzcd}
\end{equation*}
where $f,f' \in R$. Then $F(g)(s_f) = s_{fg} = s_{f'h} = F(h)(s_{f'})$, thus $\{ s_f \}$ is a matching family.
\end{proof}

If $j$ is a coverage, then let $\overline{j}$ denote the collection of families where $R \in \overline{j}(U)$ if $R = \overline{r}$ for some $r \in j(U)$. We call $\overline{j}$ the \textbf{sifted closure} of $j$.

\begin{Lemma} \label{lem sifted closure is a coverage}
The collection of families $\overline{j}$ is a sifted coverage of $\site$.
\end{Lemma}

\begin{proof}
Clearly $\overline{j}$ is sifted. We wish to show it is a coverage. Suppose we have a covering family $R \in \overline{j}(U)$, and a map $g: V \to U$. We wish to show that there is a covering family $R' \in \overline{j}(V)$ such that for every map $k \in R'$, $gk$ factors through some $l \in R$. Since $R = \overline{r}$, we know that since $j$ is a coverage, there exists some covering family $t \in j(V)$ with the corresponding property. In other words, for every map $k \in R'$ there is a commutative diagram:
\begin{equation*}
    \begin{tikzcd}
	W \\
	{V_j} & {U_i} \\
	V & U
	\arrow["{k_j}", from=1-1, to=2-1]
	\arrow["{t_j}", from=2-1, to=3-1]
	\arrow["g"', from=3-1, to=3-2]
	\arrow["{r_i}", from=2-2, to=3-2]
	\arrow["{s_j}", from=2-1, to=2-2]
	\arrow["k"', curve={height=18pt}, from=1-1, to=3-1]
\end{tikzcd}
\end{equation*}
but then $l = r_i s_j k_j$ is a morphism in $R = \overline{r}$. Thus $gk$ factors through $l$ as it is equal to it. 
\end{proof}

\begin{Cor}
Given a coverage $j$ on a category $\site$, a presheaf $F$ is a sheaf on $(\site, j)$ if and only if it is a sheaf on $(\site, \overline{j})$. In other words $\cat{Sh}(\site, j) = \cat{Sh}(\site, \overline{j})$.
\end{Cor}

\begin{proof}
This follows from Lemma \ref{lem sheaf on covering family iff on sieve it generates} and Lemma \ref{lem sifted closure is a coverage}.
\end{proof}

Now if $R \hookrightarrow yU$ is a sieve, and $f: V \to U$ is a morphism in $\site$, then let $f^* R$ denote the set of morphisms $g: W \to V$ such that $fg \in R$. This is equivalently the subfunctor $f^* R \hookrightarrow yV$ given by the pullback in $\cat{Pre}(\site)$
\begin{equation*}
    \begin{tikzcd}
	{f^* R} & R \\
	yV & yU
	\arrow["f"', from=2-1, to=2-2]
	\arrow[hook, from=1-2, to=2-2]
	\arrow[hook, from=1-1, to=2-1]
	\arrow[from=1-1, to=1-2]
	\arrow["\lrcorner"{anchor=center, pos=0.125}, draw=none, from=1-1, to=2-2]
\end{tikzcd}
\end{equation*}

\begin{Def}
A \textbf{Grothendieck coverage} is a sifted collection of families $J$ on a category $\site$ satisfying the following conditions:
\begin{itemize}
    \item [(C)] $J$ is a coverage,
    \item [(M)] for any object $U \in \site$, the maximal sieve $yU \in J(U)$, and
    \item [(L)] if $R \in J(U)$ and $S$ is another sieve on $U$ such that for each $f : V \to U \in R$, the sieve $f^*(S)$ belongs to $J(V)$.
\end{itemize}
If $(\site, J)$ is a Grothendieck coverage, then we call its sieves $R \in J(U)$ \textbf{covering sieves}. 
\end{Def}

\begin{Rem} \label{rem grothendieck coverage same as grothendieck topology}
Grothendieck coverages are usually referred to as Grothendieck topologies in the literature, but are typically presented with the following condition (C'): If $R \in J(U)$ and $f: V \to U$ any morphism in $\site$, then $f^* R \in J(V)$, instead of the condition $(C)$. It is not hard to show that these are equivalent definitions, see \cite[C2.1 Page 541]{johnstone2002sketches}.
\end{Rem}

\begin{Lemma} \label{lem pullback of sieve by map in sieve is maximal sieve}
Let $\site$ be a category and $R \hookrightarrow yU$ a sieve. If $g: V \to U$ is a map in $R$, then $g^* R = yV$.
\end{Lemma}

\begin{proof}
If $R$ is a sieve on $U$, then $g^*R = \{ f: W \to V \, : \, gf \in R \}$. But $R$ is a sieve and $g \in R$, so every map $f: W \to V$ has this property, since $R$ is closed under precomposition.
\end{proof}

\begin{Lemma} \label{lem covering sieve props}
Let $(\site, J)$ be a site with a Grothendieck coverage. Then if $R,R'$ are sieves on $U$, $R \subseteq R'$ and $R$ is a covering sieve, then $R'$ is a covering sieve.
\end{Lemma}

\begin{proof}
Let $g : V \to U \in R \subseteq R'$, then by Lemma \ref{lem pullback of sieve by map in sieve is maximal sieve}, we know that $g^*R = g^*R' = yV$, which is a covering sieve of $V$ by (M). Since this is true for all $g \in R$, $R'$ is a covering sieve by (L).
\end{proof}

Given a set $\{J_\alpha \, : \, \alpha \in A \}$ of Grothendieck coverages, it is not hard to check that the collection of families $J \coloneqq \bigcap_{\alpha \in A} J_\alpha$ defined by $J(U) = \bigcap_{\alpha \in A} J_\alpha(U)$ is a Grothendieck coverage. Thus if $j$ is a coverage on $\site$, we can consider $\overline{j}$, its sifted closure. By Lemma \ref{lem sheaf on covering family iff on sieve it generates}, we can then take the intersection of the set of Grothendieck coverages that contain all of the covering sieves of $\overline{j}$, which we denote by $\tau(j)$. This will be the smallest Grothendieck coverage containing $j$ and we will call it the Grothendieck coverage generated by $j$. 

\begin{Lemma}[{\cite[C2.1 Proposition 2.1.9]{johnstone2002sketches}}] \label{lem sheaf on grothendieck coverage generated by coverage}
Given a site $(\site, j)$, a presheaf $F$ will be a sheaf on $(\site, j)$ if and only if it is a sheaf on $( \site, \tau(j) \, )$. 
\end{Lemma}

Now we are in a position to compare different coverages on the same category. Suppose that $j,j'$ are coverages on a category $\site$ such that if $r'$ is a covering family in $j'(U)$, then there exists a covering family $r \in j(U)$ and a refinement $f: r \to r'$. We will say that $j'$ is \textbf{subordinate} to $j$ and write $j' \leq j$. 

\begin{Prop} \label{lem equivalent coverages}
Suppose that $j,j'$ are coverages on a category $\site$ such that $j \leq j'$ and $j' \leq j$, then $\cat{Sh}(\site, j) = \cat{Sh}(\site, j')$.
\end{Prop}

\begin{proof}
Suppose that $j' \leq j$. Then every covering family $r' \in j'(U)$ can be refined by a covering family $r \in j(U)$. Therefore $\overline{r} \subseteq \overline{r'}$, since sieves are closed under precomposition. Now note that $\overline{r} \in \tau(j)(U)$, and thus by Lemma \ref{lem covering sieve props}, $\overline{r'} \in \tau(j)(U)$. Thus if $r'$ is a covering family of $j'$, then $\overline{r'}$ is a covering sieve of $\tau(j)$. Thus if $F$ is a sheaf on $j$, then by Lemma \ref{lem sheaf on grothendieck coverage generated by coverage}, it will be a sheaf on $\tau(j)$, so it will then be a sheaf on $\overline{r'}$, and thus by Lemma \ref{lem sheaf on covering family iff on sieve it generates} it will be a sheaf on $r'$. Since $r'$ was arbitrary, $F$ is therefore a sheaf on all of $j'$. Thus if $F$ is a sheaf on $(\site, j)$, then it will be a sheaf on $(\site, j')$. Conversely $j \leq j'$ proves that $\cat{Sh}(\site, j) = \cat{Sh}(\site, j')$.
\end{proof}

\begin{Prop}
Let $j_\good$ denote the good cover coverage on $\cat{Man}$ defined in Example \ref{ex j good coverage}, and $j_\open$ denote the open cover coverage on $\cat{Man}$ defined in Example \ref{ex j open coverage}. Then $\cat{Sh}(\cat{Man}, j_\good) = \cat{Sh}(\cat{Man}, j_\open)$. This similarly holds for $\cart$ and $\cat{Open}$.
\end{Prop}

\begin{proof}
By \cite[Corollary 5.2]{bott1982differential}, we have that $j_\open \leq j_\good$. Now $j_\good \leq j_\open$, since every good open cover is in particular an open cover.
\end{proof}

\begin{Cor}
The categories of concrete sheaves on $\cat{Man}$ with the open and good open coverages agree:
$$\cat{ConSh}(\cat{Man}, j_\good) = \cat{ConSh}(\cat{Man}, j_\open).$$
This result remains true if we replace $\cat{Man}$ with $\cat{Open}$ or $\cart$.
\end{Cor}

Now we wish to compare sites whose underlying categories differ. Let $\site$ be a category and $\site' \hookrightarrow \site$ a full subcategory. Then a sieve $R \hookrightarrow yU$ on $\site$ is said to be a $\site'$-sieve if it is generated by a family of morphisms all of whose domains are objects in $\site'$.

\begin{Def}
Let $(\site, J)$ be a category with a Grothendieck coverage, and $\site' \hookrightarrow \site$ a full subcategory. We say that $\site'$ is \textbf{$J$-dense} in $\site$ if every object $U \in \site$ has a covering sieve $R \in J(U)$ that is a $\site'$-sieve.
\end{Def}

If $(\site, j)$ is a site where $j$ is not necessarily a Grothendieck coverage, then we say that a full subcategory $\site' \hookrightarrow \site$ is $j$-dense if it is $\tau(j)$-dense in $(\site, \tau(j))$.

By \cite[Theorem 5.1]{bott1982differential}, every finite dimensional smooth manifold has a good open cover. Thus if $\mathcal{U} = \{U_i \subseteq M \}$ denotes a good open cover of $M$, then $\overline{\mathcal{U}}$ is a covering sieve of $(\cat{Man}, \tau(j_\open))$ and it is a $\cart$-sieve. Since this is true for any manifold $M$, it follows that $\cart$ is $j_\open$-dense in $(\cat{Man}, j_\open)$. By the same argument $\cart$ is also dense in $(\cat{Man}, j_\good)$. This also implies that $\cat{Open}$ is dense in $(\cat{Man}, j)$ for $j \in \{ j_\open, j_\good \}$.

Now suppose $(\site, J)$ is a site with a Grothendieck coverage. If $\site' \hookrightarrow \site$ is a full subcategory, define a collection of families $J'$ on $\site'$ by defining $J'(U)$ to be the collection of those covering sieves $R \in J(U)$ that are also $\site'$-sieves. It is not hard to show that this is also a Grothendieck coverage, called the \textbf{induced coverage} on $\site'$, and denoted $J|_{\site'}$.

The following result is well-known in the literature as the \textbf{Comparison Lemma}.

\begin{Th}[{\cite[Theorem 2.2.3]{johnstone2002sketches}}] \label{thm comparison lemma}
Let $(\site, J)$ be a site with a Grothendieck coverage and $\site' \hookrightarrow \site$ a $J$-dense full subcategory. Then the restriction functor $\text{res}: \cat{Pre}(\site) \to \cat{Pre}(\site')$ itself restricts to a functor $\text{res}: \cat{Sh}(\site, J) \to \cat{Sh}(\site', J|_{\site'})$, and this functor is an equivalence of categories. 
\end{Th}

Note that $\tau(j^{\cart}_\open) = \tau(j^\cat{Man}_\open)|_{\cart}$. This can be seen by simply noting that every sieve in $\tau(j^\cat{Man}_\open)|_{\cart}$ is generated by a open cover by cartesian spaces, and contains every such sieve. A similar argument proves the same for $j^\cart_\good$ and $j^\cat{Open}_\good, j^\cat{Open}_\open$.

\begin{Cor} \label{cor smooth sites have equivalent sheaves and concrete sheaves}
All categories of the form $\cat{ConSh}(\site, j)$ for $\site \in \{\cart, \cat{Open}, \cat{Man} \}$ and $j \in \{j_\open, j_\good \}$ are equivalent.
\end{Cor}

\begin{proof}
Theorem \ref{thm comparison lemma} implies that the categories $\cat{Sh}(\site, j)$ for $\site \in \{\cart, \Open, \Man \}$ and $j \in \{j_\open, j_\good \}$ are all equivalent. Further, using the same argument as in the proof of \cite[Lemma 2.9]{watts2014diffeological}, the above equivalences restrict to equivalences of all the full subcategories $\cat{ConSh}(\site, j)$ of concrete sheaves.
\end{proof}

Thus by Theorem \ref{thm diff' = consh(open)}, we have that $\cat{Diff}' \simeq \cat{ConSh}(\cat{Open}, j_\open)$, and by Corollary \ref{cor smooth sites have equivalent sheaves and concrete sheaves}, we have that $\cat{ConSh}(\cat{Open}, j_\open) \simeq \cat{ConSh}(\cart, j_\good) \simeq \cat{Diff}$. Thus we have proved the main proposition of this section.

\begin{Prop} \label{prop classical diff spaces equiv to diff spaces}
The category of classical diffeological spaces $\cat{Diff}'$ is equivalent to the category of diffeological spaces $\cat{Diff}$ introduced in Definition \ref{def diffeological space}.
\end{Prop}

\printbibliography

\end{document}